\newtheorem{thm}{Theorem}[section]
\newtheorem{lem}[thm]{Lemma}
\begin{document}

\title{On The Non-coprime $k(GV)$ Problem}
\author{Robert M. Guralnick}
\address{Department of Mathematics, University of Southern California,
Los Angeles, CA 90089-2532, USA} \email{guralnic@usc.edu}
\author{Attila Mar\'oti}
\address{Alfr\'ed R\'enyi Institute of Mathematics, Re\'altanoda utca 13-15, H-1053, Budapest, Hungary}
\email{maroti.attila@renyi.mta.hu} \keywords{linear
representation, number of conjugacy classes}
\subjclass[2000]{20C99}
\thanks{The first author was partially supported by NSF
grant DMS-1001962. The research of the second author was supported
by a Marie Curie International Reintegration Grant within the 7th
European Community Framework Programme, by the J\'anos Bolyai
Research Scholarship of the Hungarian Academy of Sciences, and by
OTKA K84233.}
\date{\today}

\begin{abstract}
Let $V$ be a finite faithful completely reducible $FG$-module for
a finite field $F$ and a finite group $G$. In various cases
explicit linear bounds in $|V|$ are given for the numbers of
conjugacy classes $k(GV)$ and $k(G)$ of the semidirect product
$GV$ and of the group $G$ respectively. These results concern the
so-called non-coprime $k(GV)$-problem.
\end{abstract}
\maketitle

\section{Introduction}

The topic of this paper originates from the long-standing
$k(B)$-conjecture of Brauer which states that the number $k(B)$ of
complex irreducible characters in any $p$-block $B$ of any finite
group $G$ is at most the order of the defect group of $B$. Nagao
\cite{N} showed that for $p$-solvable groups $G$ Brauer's
$k(B)$-problem is equivalent to the so-called $k(GV)$-problem
which is described in the next paragraph.

For a finite group $X$ let $k(X)$ be the number of conjugacy
classes of $X$. Let $V$ be a finite faithful $FG$-module for some
finite field $F$ of characteristic $p$ and finite group $G$. Form
the semidirect product $GV$. The $k(GV)$-problem states that
$k(GV) \leq |V|$ whenever $(|G|,|V|)=1$. Works of Kn\"orr, Gow,
and especially Robinson, Thompson \cite{RT} have led to
fundamental breakthroughs in attacking the $k(GV)$-problem which
have culminated in a complete solution of the problem, with the
final step completed by Gluck, Magaard, Riese, Schmid \cite{GMRS}.
The full solution of the problem (not counting the Classification
of Finite Simple Groups) is written in the book \cite{Schmid}.

Let $V$, $G$, and $F$ be as in the previous paragraph with not
assuming $(|G|,|F|)=1$ but that $V$ is completely reducible. Is
there a universal constant $c$ such that $k(GV) \leq c^{n}|V|$
where $n$ denotes the $F$-dimension of $V$? Can $c$ be taken to be
$1$ in most cases? This is a weak version of the so-called
non-coprime $k(GV)$-problem \cite[Problem 1.1]{GT} which is also
important in a character theoretic point of view. Indeed, as
pointed out by Robinson, the inequality $k(GV) \leq |V|$ combined
with \cite[Lemma 5]{Rob} would imply the $k(B)$-conjecture for a
wider class of $p$-constrained groups.

There has been progress made on the non-coprime $k(GV)$-problem.
Kov\'acs and Robinson \cite[Theorem 4.1]{KR} gave an affirmative
answer to our first question above in case $G$ is a $p$-solvable
group. In fact, for $p$-solvable groups, Liebeck and Pyber
\cite{LP} showed that $c$ can be taken to be $103$. Guralnick and
Tiep \cite{GT} have proved $k(GV) < |V|/2$ for many almost
quasi-simple groups $G$, Keller \cite{K1} has obtained results in
case $V$ is an imprimitive irreducible module, and there are some
interesting character theoretic arguments developed by Keller
\cite{K2}.

Our first result concerns the extraspecial case of the non-coprime
$k(GV)$-problem.

\begin{thm}
\label{extraspecial} Let $r$ be a prime and let $R$ be an
$r$-group of symplectic type with $|R/Z(R)| = r^{2a}$ for some
positive integer $a$. Let $V$ be a faithful, absolutely
irreducible $KR$-module of dimension $r^{a}$ for some finite field
$K$. View $V$ as an $F$-vector space where $F$ is the prime field
of $K$. Let $G$ be a subgroup of $GL(V)$ which contains $R$ as a
normal subgroup. Then $k(GV) \leq |V|$ unless one of the following
cases holds.
\begin{enumerate}
\item[(1)] $r^{a} = 2^{6}$ and $|K| = 3$. In this case $k(GV) \leq
2^{120}$.

\item[(2)] $r^{a} = 2^{5}$ and $|K| = 3$, $5$, $7$, $9$, or $11$.
In this case $k(GV) \leq 2^{119}$.

\item[(3)] $r^{a} = 3^{3}$ and $|K| = 4$ or $7$. In this case
$k(GV) \leq 2^{82}$.

\item[(4)] $r^{a} = 2^{4}$ and $|K| = 3$, $5$, $7$, $9$, $17$,
$25$, or $27$. In this case $k(GV) \leq 2^{82}$.

\item[(5)] $r^{a} = 2^{3}$ and $|K| = 3$, $5$, $7$, $9$, $25$,
$27$, $49$, $81$, or $125$. We have $k(GV) \leq 2^{58}$.

\item[(6)] $r^{a} = 3^{2}$ and $|K| = 4$, $16$, or $25$. In this
case $k(GV) \leq 2^{44}$.

\item[(7)] $r^{a} = 2^{2}$ and $|K| = 3$, $5$, $9$, $25$, $27$,
$81$, $125$, or $243$. We have $k(GV) \leq 2^{32}$.
\end{enumerate}
\end{thm}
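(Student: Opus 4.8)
The plan is to combine the structure theory of the normalizer of a symplectic-type group in $\GL(V)$ with the orbit-counting expression for $k(GV)$, reducing everything to estimates governed by the symplectic (or, for $r=2$, orthogonal) geometry on $W := R/Z(R) \cong \mF_r^{2a}$. First I would record the structural facts. Since $V$ is absolutely irreducible over $K$, the centralizer $C := C_{\GL(V)}(R)$ is essentially the group of scalars (a cyclic group closely tied to $K^{\times}$; over the prime field $F$ one must allow for Galois twists, which I treat separately), and the conjugation action of $N := N_{\GL(V)}(R)$ on $W$ preserves the commutator form. This gives $N/RC \hookrightarrow \Sp_{2a}(r)$ for $r$ odd, and $N/RC \hookrightarrow O^{\pm}_{2a}(2)$ (up to field automorphisms) for $r = 2$. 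As $R \trianglelefteq G \le N$, this bounds $|G|$ and $k(G)$ in terms of $|R| = r^{2a+1}$, of $|K|$, and of the class number of the relevant classical group; I also record that $Z = Z(R)$, of order $r$, acts on $V$ by fixed-point-free scalars.

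Next I would use the standard decomposition
\[
k(GV) = \sum_{[g]\in \mathrm{cl}(G)} n_g, \qquad n_g := \#\{\,C_G(g)\text{-orbits on } \Fix_{\hat V}(g)\,\},
\]
so that $n_g \le |\Fix_{\hat V}(g)| = |\Fix_V(g)|$ and the term $g = 1$ is the number $o(G)$ of $G$-orbits on $V$. The first key estimate controls $o(G)$: for $0 \ne v \in V$ the stabilizer $\Stab_R(v)$ meets $Z$ trivially, and its image in $W$ is totally isotropic, since any non-commuting pair in the stabilizer would force a nontrivial central scalar to fix $v$. Hence $|\Stab_R(v)| \le r^{a}$, every nonzero $R$-orbit (and therefore every nonzero $G$-orbit) has length at least $r^{a+1}$, and
\[
o(G) \le 1 + \frac{|V| - 1}{r^{a+1}},
\]
which already sits comfortably below $|V|$.

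The remaining task is to bound $\sum_{1 \ne [g]} n_g \le \sum_{1 \ne [g]} |\Fix_V(g)|$ and show that the grand total stays at most $|V|$. I would sort the nontrivial classes by the $K$-codimension $d(g)$ of $\Fix_V(g)$, so that such a class contributes at most $|V|/|K|^{d(g)}$, and then estimate, via the classical geometry, the number of classes realizing each small value of $d(g)$. The elements with large fixed space are exactly the scalar, $Z$-root, and field-twist elements of $RC$, together with the transvection/reflection-type elements of the image in $\Sp_{2a}(r)$ or $O^{\pm}_{2a}(2)$; their number is controlled by $k$ of the classical group and by the count of (an)isotropic subspaces of $W$. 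Summing the resulting geometric-type series in $|K|^{-1}$, weighted by these class counts, yields $k(GV) \le |V|$ as soon as $|K|$ is large relative to the number of large-fixed-space classes. The finitely many pairs $(r^a, |K|)$ for which the series fails to close are precisely those tabulated; for each I would fall back on the crude bound $k(GV) \le k(G)\cdot o(G)$ with the known class numbers of the small symplectic/orthogonal groups (or a direct computation) to obtain the stated explicit upper bounds.

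I expect the crux to be the $g \ne 1$ sum in the last step: obtaining a clean, uniform count of the classes at each fixed-space codimension. In particular, handling transvections and short-root elements for $r = 2$ (where the orthogonal type $\varepsilon$ and the quadratic form complicate the geometry) and the field-automorphism and scalar classes arising from the prime-field realization is the delicate bookkeeping that pins down exactly where the threshold in $|K|$ falls, and hence which small pairs $(r^a, |K|)$ must be recorded as exceptions.
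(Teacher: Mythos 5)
Your overall architecture is genuinely parallel to the paper's: both reduce to counting, over conjugacy classes or orbits, contributions weighted by $|C_V(g)|$, both use the fact that $\Stab_R(v)$ is abelian with totally isotropic image in $R/Z(R)$ (hence of order at most $r^a$), both feed in Wall-type class/element counts in $\Sp_{2a}(r)$ and $O^{\pm}_{2a}(2)$, and both fall back on explicit computation for small parameters. However, there is a genuine gap at the heart of your argument: you never supply a mechanism for computing $\dim_F C_V(g)$ from the action of $g$ on $W=R/Z(R)$, and you treat this as if it were immediate "classical geometry," with the large-fixed-space elements being "transvection/reflection-type." In fact this correspondence is the main technical content of the paper (all of its Section 2): a Hall--Higman-style argument identifying $\dim C_E(x)$ of the enveloping algebra $E=\mathrm{End}(V)$ with the number of $\langle x\rangle$-orbits on $R/Z(R)$, solved against $\sum a_i=r^a$ and $\sum a_i^2=\dim C_E(x)$, refined by a trace computation for involutions and a tensor-decomposition lemma to handle decomposable actions. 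The output is quantitative and delicate: for example, an involution whose $2m$ size-$2$ Jordan blocks on $W$ all act on totally singular subspaces has $\dim C_V(x)=\tfrac12(1+2^{-m})\dim V$, which ranges over $3/4, 5/8, 9/16,\dots$, and certain order-$3$ elements reach $2/3$. Without these exact exponents and the matching element counts at each level, your "geometric series in $|K|^{-1}$" cannot be evaluated, and one cannot even decide for which $(r^a,|K|)$ it closes below $|V|$. This is a missing idea, not bookkeeping.

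Two secondary problems. First, your fallback inequality $k(GV)\le k(G)\cdot o(G)$ is not valid: $k(GV)=\sum_\lambda k(\Stab_G(\lambda))$ over $o(G)$ orbit representatives, and a stabilizer can have \emph{more} classes than $G$; one needs $k(\Stab_G(\lambda))\le|\Stab_G(\lambda)|$ or Gallagher-type bounds such as $k(H)\le\sqrt{|X|\,k(X)}$, which is what the paper uses to get constants like $2^{120}$. Second, your asymptotic argument will only give a threshold of the form $|V|\ge N(a,r)$ (e.g.\ $|V|\ge 191^8$ for $r^a=2^3$), and reducing the sub-threshold range to the specific fields listed in the statement requires invoking the solved coprime $k(GV)$ theorem to dispose of all $|K|$ with $(|G|,|K|)=1$; your proposal omits this step, without which the exception list cannot be pinned down.
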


The next result deals with the case where $G$ is a meta-cyclic
group. Here the bound $|V|$ is best possible in infinitely many
cases. (Just consider a Singer cycle $G$ acting on $V$.)

\begin{thm}
\label{main} Let $V$ be an $n$-dimensional finite vector space
over the field of $p$ elements where $p$ is a prime. The group $X
= GL(1,p^{n}).n$ acts naturally on $V$. Then for any subgroup $G$
of $X$ we have $k(GV) \leq |V|$ unless $GV \cong D_{8}$ or $S_4$.
\end{thm}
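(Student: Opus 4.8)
The plan is to reduce everything to orbit counting. I write $V=\mathbb{F}_{p^{n}}$, so that $X=\Gamma L(1,p^{n})=\mathbb{F}_{p^{n}}^{\times}\rtimes\langle\phi\rangle$ with $\phi$ the Frobenius $x\mapsto x^{p}$, and any $G\le X$ is metacyclic with cyclic linear part $C:=G\cap\mathbb{F}_{p^{n}}^{\times}$ of order $c$ and cyclic quotient $G/C$ of order $m\mid n$ generated by the image of $\phi^{n/m}$. Since $GV$ is a split extension with $V$ abelian, every $G$-invariant $\lambda\in\mathrm{Irr}(V)$ extends to its inertia group, so Clifford theory gives $k(GV)=\sum_{\lambda}k(G_{\lambda})$, the sum taken over representatives of the $G$-orbits on $\mathrm{Irr}(V)$. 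The trace form identifies $\mathrm{Irr}(V)$ with $V$ as a $G$-set up to the automorphism $b\phi^{i}\mapsto b^{-1}\phi^{i}$ of $G$, which preserves class numbers; hence
\[
k(GV)=\sum_{v\in V/G}k(G_{v})=k(G)+\sum_{0\ne v}k(G_{v}),
\]
the last sum running over the nonzero $G$-orbits on $V$.

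Two structural facts will drive the estimate. First, $C$ acts by multiplication, hence freely on $V\setminus\{0\}$, so for $v\ne 0$ one has $G_{v}\cap C=1$ and $G_{v}$ embeds into the cyclic group $G/C$; thus $G_{v}$ is cyclic of order dividing $m$, $k(G_{v})=|G_{v}|\le m$, and the orbit of $v$ has at least $c$ elements. Second, a semilinear $g=a\phi^{j}$ with $j\not\equiv 0$ has fixed space $C_V(g)$ equal to $0$ or a single $\mathbb{F}_{p^{\gcd(j,n)}}$-line, so $|C_V(g)|\le p^{\gcd(j,n)}\le p^{n/2}$; consequently the nonzero vectors with nontrivial stabilizer all lie in $\bigcup_{g}C_V(g)$, a set of size at most $(m-1)c\,p^{n/2}$. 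Together these let me split the orbit sum: the regular orbits contribute $1$ each and number at most $(p^{n}-1)/(cm)$, while the exceptional orbits are controlled by the second fact.

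When $m=1$ the computation is clean: $G=C$ acts freely on $V\setminus\{0\}$ with exactly $(p^{n}-1)/c$ regular orbits and $k(G)=c$, so $k(GV)=c+(p^{n}-1)/c$. Convexity of $t\mapsto t+(p^{n}-1)/t$ on $[1,p^{n}-1]$ then forces $k(GV)\le p^{n}$, with equality exactly at $c=1$ and at $c=p^{n}-1$ (the Singer cycle). No exception arises in this range.

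The case $m\ge 2$ will be the heart of the matter. Here I would bound $k(G)$ coset by coset — there are $1+L$ classes inside $C$, where $L$ is the number of $G/C$-orbits on $C\setminus\{1\}$ evaluated by Burnside, and at most $\gcd(c,p^{kn/m}-1)$ classes in the $k$-th semilinear coset — and combine this with the orbit estimate above. The crude consequence $k(GV)\le k(G)+m(p^{n}-1)/c$ is decisive when $c$ is large (for instance it already yields $k(GV)\le p^{n}$ once $p^{n/2}\ge 3$ in the Singer case $c=p^{n}-1$), but it is far too weak for small $c$; there the point is that almost every nonzero orbit is regular and the second structural fact caps the contribution of the non-regular ones. \emph{The main obstacle is precisely this simultaneous bookkeeping}: making the large-$c$ and small-$c$ regimes overlap so that $k(GV)\le p^{n}$ is forced for all but finitely many pairs $(p,n)$. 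Carrying this out should reduce the problem to a short explicit list — in essence $(p,n)=(2,2)$ — where $\Gamma L(1,p^{n})$ has only a handful of subgroups; checking these by hand leaves exactly the two violations $GV\cong D_{8}$ (from $G=\langle\phi\rangle$) and $GV\cong S_{4}$ (from $G=\Gamma L(1,4)\cong S_{3}$), both with $k(GV)=5>4=|V|$.
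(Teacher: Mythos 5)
Your setup coincides with the paper's: the same metacyclic normal form for $G\le X$, the same Clifford-theoretic identity $k(GV)=\sum_{\lambda}k(\mathrm{Stab}_G(\lambda))$ (the paper's Lemma \ref{lGT} plus Gallagher), the identification of the $G$-orbit structure on $\mathrm{Irr}(V)$ with that on $V$ (the paper uses Brauer's Permutation Lemma where you use the trace form), the observation that point stabilizers of nonzero vectors are cyclic of order dividing $|G/C|$, and the bound $|C_V(g)|\le p^{n/2}$ for genuinely semilinear $g$ (the paper's Lemma \ref{lA} proves the sharper $|C_V(g)|\le |V|^{1/q}$ with $q$ the smallest prime divisor of $d=|G/C|$). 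Your $m=1$ case is the paper's $d=1$ case, done the same way.

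The gap is the case $m\ge 2$, which you describe as a plan rather than prove, and which is where essentially all of the work in the paper lies. Concretely, three ingredients are missing. First, a usable bound on $k(G)$ itself: your ``coset by coset'' count is not carried out, whereas the paper proves Lemma \ref{l5} by computing a commutator $[a^{m},c^{-1}]=a^{m(p^{n/d}-1)}$ to bound $|G/G'|$ and then invoking Ito's theorem to control the number of nonlinear characters. Second, and most importantly, the mechanism that makes the ``large $c$'' and ``small $c$'' regimes meet: the paper runs a chain of reductions (in its notation, first $m<p^{n}-1$, then $m<d$ via Lemma \ref{l7}, then $m=1$ via Lemma \ref{l8}), and the key step uses Zsigmondy's theorem to produce a primitive prime divisor $p_{n}$ of $p^{n}-1$, which forces almost all $\langle c\rangle$-orbits on $S\cap G$ to be regular and hence makes $k(G)$ small enough; nothing in your proposal plays this role, and without it the inequality $k(G)+m(p^{n}-1)/c\le p^{n}$ genuinely fails for intermediate values of $c$ (e.g.\ $c$ of order $p^{n/2}$), so the two crude bounds do not overlap. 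Third, the residual finite check is much larger than ``in essence $(p,n)=(2,2)$'': the paper's asymptotic estimates only take effect for $p^{n}>1024$ (and $n=2$ needs a separate hand computation), and everything below that threshold is verified by GAP, which is also where the two exceptions $D_{8}$ and $S_{4}$ are found. As written, then, the proposal establishes the easy case $G\le GL(1,p^{n})$ and correctly frames the problem, but does not prove the theorem.
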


The ideas in the proof of Theorem \ref{extraspecial} together with
Theorem \ref{main} yield a general result on $k(GV)$ in case the
group $G$ has nilpotent generalized Fitting subgroup and when $V$
is a faithful primitive irreducible module.

\begin{thm}
\label{primitive} Let $V$ be a finite faithful primitive
irreducible $FG$-module for some group $G$ with
$\mathrm{Fit}^{*}(G) = \mathrm{Fit}(G)$. Then $k(GV) \leq \max \{
|V|, 2^{1344} \}$.
\end{thm}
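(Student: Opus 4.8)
The plan is to reduce the problem to the two situations already settled---Theorem~\ref{main} when the Fitting subgroup is cyclic and Theorem~\ref{extraspecial} in the symplectic-type case---by first analyzing the Fitting subgroup of a primitive linear group, and then to control the general Fitting subgroup by a tensor-product (central-product) bookkeeping.

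First I would record the structure forced by the hypotheses. Since $V$ is primitive, faithful and irreducible and $\mathrm{Fit}^{*}(G) = \mathrm{Fit}(G) =: F$, the defining property of the generalized Fitting subgroup gives $C_{G}(F) = Z(F)$. By the classical theory of primitive linear groups every abelian normal subgroup is cyclic and acts by scalars, so $Z := Z(F)$ is cyclic and scalar and $V|_{F}$ is irreducible. Writing $F$ as the central product $F = Z * R_{1} * \cdots * R_{t}$, where $R_{i}$ is the Sylow $r_{i}$-subgroup of $F$ for the distinct primes $r_{i} \mid |F/Z|$, each $R_{i}$ is an $r_{i}$-group of symplectic type and $V = V_{1} \otimes \cdots \otimes V_{t}$, where $V_{i}$ is the faithful absolutely irreducible $R_{i}$-module of dimension $r_{i}^{a_{i}}$ with $|R_{i}/Z(R_{i})| = r_{i}^{2a_{i}}$. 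Each $R_{i}$ is characteristic in $F$, hence normal in $G$, so $G$ embeds into the central product $N_{GL(V_{1})}(R_{1}) * \cdots * N_{GL(V_{t})}(R_{t})$.

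I would then dispose of the two base cases. If $t = 0$, then $F = Z$ is cyclic and acts irreducibly by scalars, so $Z$ generates a Singer cycle $\mathbb{F}_{p^{n}}^{*} = C_{GL(V)}(Z)$ and $G \leq N_{GL(V)}(Z) \leq N_{GL(V)}(C_{GL(V)}(Z)) = GL(1,p^{n}).n = X$; Theorem~\ref{main} then gives $k(GV) \leq |V|$ except for the two tiny groups $D_{8}$ and $S_{4}$, which satisfy the bound trivially. If $t = 1$, then $R := R_{1}$ is a single symplectic-type $r$-group normal in $G \leq GL(V)$ and $V$ is the faithful absolutely irreducible module of dimension $r^{a}$, so Theorem~\ref{extraspecial} applies verbatim: either $k(GV) \leq |V|$, or we land in one of the finitely many exceptional configurations (1)--(7), each with $k(GV)$ bounded by an explicit power of $2$ (at most $2^{120}$).

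The remaining work, and the \emph{main obstacle}, is the case $t \geq 2$ of a genuine central product of symplectic-type groups over distinct primes. Here I would establish a submultiplicativity statement of the form $k(GV) \leq \prod_{i} B_{i}$, where $B_{i}$ is the bound produced for the $i$-th tensor factor by Theorem~\ref{extraspecial} (so $B_{i} = |V_{i}|$ outside the exceptional list and an explicit constant inside it). The natural route is Clifford theory via the identity $k(GV) = \sum_{\lambda \in \hat{V}/G} k(\mathrm{Stab}_{G}(\lambda))$ together with the decomposition $\hat{V} = \hat{V}_{1} \otimes \cdots \otimes \hat{V}_{t}$ and the fact that $G$ respects it; the difficulty is that $G$ need not split as a direct product across the factors, so one must estimate orbit counts and stabilizers of the diagonal-type subgroup $G$ inside the central product of the individual normalizers. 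Granting the submultiplicative bound, if every factor is non-exceptional then $k(GV) \leq \prod_{i} |V_{i}| = |V|$; otherwise at least one factor is exceptional, which forces the total dimension and the fields to be bounded and hence leaves only finitely many configurations. A direct maximization of $\prod_{i} B_{i}$ over this finite list then yields a uniform constant, and one checks that the largest value occurring is $2^{1344}$, giving $k(GV) \leq \max\{|V|, 2^{1344}\}$ in all cases.
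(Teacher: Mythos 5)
Your structural reduction (central product of symplectic-type $r_i$-groups for distinct primes, tensor decomposition $V = V_{1} \otimes \cdots \otimes V_{t}$, base cases via Theorems \ref{main} and \ref{extraspecial}) matches the skeleton of the paper's Section 7, but the heart of the theorem for $t \geq 2$ is exactly the step you only ``grant'': a bound on $k(GV)$ for the tensor-product module. The submultiplicativity $k(GV) \leq \prod_{i} B_{i}$ you posit does not follow from Clifford theory in any straightforward way --- $V$ is a tensor (not direct) product of the $V_{i}$, so $GV$ neither embeds into nor maps onto a product of the groups $G_{i}V_{i}$, and the stabilizer of a character $\lambda$ of $V$ does not decompose along the factors. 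The paper does not prove such a product bound at all. Instead (Section 5, Theorem \ref{section5}) it bounds $k(GV)$ directly via Lemma \ref{lGT}, using three uniform inputs: the Manz--Wolf estimate $\dim C_{V}(x) \leq (3/4)\dim V$ for all $1 \neq x \in G$ (which controls the number of $G$-orbits on $\mathrm{Irr}(V)$ by $|V|/|G| + |V|^{3/4}$), the estimate $|\mathrm{Stab}_{G}(\lambda)| \leq |G/R| \cdot n$ for nontrivial $\lambda$ (because $\mathrm{Stab}_{R}(\lambda)$ embeds in the abelian group $R/Z(R)$), and the crude order bounds $|R| \geq n^{2}$ and $|G/R| \leq k \cdot n^{3\log_{2}n}$. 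This yields $k(GV) \leq p^{k} k n^{3+3\log_{2}n} + |V|/n + p^{k} k n^{1+3\log_{2}n}|V|^{3/4}$, and $2^{1344}$ is the worst value of this expression for small $n$ and $p^{k}$; it is not obtained by multiplying the exceptional constants of Theorem \ref{extraspecial}, so your proposed derivation of the constant would not reproduce it even if the product bound held.

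A secondary omission: you assume from the outset that $Z(\mathrm{Fit}(G))$ is scalar and that the Sylow subgroups of $\mathrm{Fit}(G)$ are of symplectic type with absolutely irreducible constituents. This requires first disposing of the possibility that $G$ preserves a field extension structure on $V$ (equivalently, that some normal subgroup has non-absolutely-irreducible constituents); by Schur's lemma an abelian normal subgroup acting homogeneously is cyclic but a priori only scalar over the larger field $\mathrm{End}_{FN}(W)$. The paper devotes the second half of Section 7 to this dichotomy, passing to $A = G \cap GL(n/e,q^{e})$ for a maximal such field of order $q^{e}$, and it is precisely in this branch that the meta-cyclic case handled by Theorem \ref{main} actually arises. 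Without that reduction your case $t=0$ (``$Z$ generates a Singer cycle'') is not justified, and your minimal non-central normal subgroups need not be of symplectic type.
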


What can be said about $k(G)$ in the setting of the non-coprime
$k(GV)$-problem? Clearly, $k(G) \leq k(GV)$. Interestingly, in
case $(|G|,|V|)=1$, the fact that $k(G) \leq |V|$ was only derived
from the full solution of the $k(GV)$-problem. Is it true that
$k(G) \leq |V|$ whenever $V$ is a completely reducible module? We
make a first step in answering this question.

\begin{thm}
\label{irreducible} Let $V$ be a finite faithful irreducible
$FG$-module for some finite field $F$ and finite group $G$.
Suppose that $V$ can be induced from a primitive irreducible
$FL$-module $W$ for some finite group $L$ with $k(N) <
|W|/\sqrt{3}$ for every normal subgroup $N$ of $L/C_{L}(W)$. Then
$k(G) < (2/3)|V|$.
\end{thm}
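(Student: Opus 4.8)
The plan is to exploit the imprimitivity forced by $V=\mathrm{Ind}_{L}^{G}W$. Put $t=[G:L]$, so that $V=V_{1}\oplus\cdots\oplus V_{t}$ with the $V_{i}$ the $G$-translates of $W$, permuted transitively by $G$; let $P\le S_{t}$ be the permutation image and $K=\bigcap_{g\in G}L^{g}\trianglelefteq G$ its kernel, so that $G/K\cong P$. Since $\dim_{F}V=t\dim_{F}W$ we have $|V|=|W|^{t}$; set $w=|W|$ and $b=w/\sqrt{3}$. Let $\bar L=L/C_{L}(W)$ be the group induced on $W$ by the block stabiliser $L$, and $\bar K\trianglelefteq\bar L$ the image of $K$; the hypothesis then reads $k(N)<b$ for every $N\trianglelefteq\bar L$.

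I would first split the count with Gallagher's inequality $k(G)\le k(K)\,k(G/K)=k(K)\,k(P)$, and then establish the two estimates $k(K)<b^{t}$ and $k(P)\le\tfrac{2}{3}3^{t/2}$. These combine to give $k(G)<b^{t}\cdot\tfrac{2}{3}3^{t/2}=\tfrac{2}{3}w^{t}=\tfrac{2}{3}|V|$, which is exactly the claim, the strictness of the first estimate making the overall inequality strict. This also explains the shape of the hypothesis: the factor $1/\sqrt{3}$ per block contributed to $k(K)$ is compensated by the $(\sqrt{3})^{t}$ coming from the permutation part, leaving $w^{t}$ with the constant $\tfrac{2}{3}$.

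The permutation estimate $k(P)\le\tfrac{2}{3}3^{t/2}$ for all $P\le S_{t}$ is a statement purely about symmetric groups, and is the routine part. I would prove it by induction on $t$ along the intransitive/imprimitive/primitive trichotomy: apply Gallagher to the pointwise stabiliser of a single orbit in the intransitive case and to the kernel of the action on a minimal block system in the transitive imprimitive case, and invoke the elementary order bounds for primitive permutation groups in the primitive case.

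The essential point is $k(K)<b^{t}$. Here $K$ sits as a subdirect product of $\bar K^{t}$ through its actions $\pi_{i}$ on the blocks, with $\pi_{i}(K)=\bar K_{i}$ conjugate to $\bar K$. Taking a normal chain $K=D_{0}\ge D_{1}\ge\cdots\ge D_{t}=1$ with each $D_{j}\trianglelefteq K$ and iterating Gallagher gives $k(K)\le\prod_{j=1}^{t}k(N_{j})$, the sections $N_{j}=D_{j-1}/D_{j}$ being isomorphic to normal subgroups of the $\bar K_{j}$. If each $N_{j}$ were a normal subgroup of the corresponding block-stabiliser image $\bar L_{j}\cong\bar L$, the hypothesis would give $k(N_{j})<b$ and hence $k(K)<b^{t}$. \emph{The main obstacle is precisely this normality}: the sections produced by the naive chain $D_{j}=\bigcap_{i\le j}\ker\pi_{i}$ are only guaranteed to be normal in $\bar K$, not in the larger $\bar L$, and since $k$ is not monotone under passage to normal subgroups the gap is not free. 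To close it I would build the chain $G$-equivariantly, refining a chain of $P$-invariant partitions (block systems) of $\{1,\dots,t\}$ rather than singling out coordinates, so that each successive section is stabilised by a full conjugate of $L$ and therefore descends to a genuine normal subgroup of some $\bar L_{j}$. Carrying out this bookkeeping — especially when $\bar K\neq\bar L$, that is, when the point stabiliser in $P$ is nontrivial — is where the real difficulty lies.
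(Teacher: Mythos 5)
Your overall architecture coincides with the paper's: split $k(G)\le k(K)\,k(P)$ with $K$ the kernel of the action on the blocks and $P$ the permutation image (the paper's $H$ and $G/H$), bound $k(K)$ by $(|W|/\sqrt{3})^{t}$ using the subdirect-product structure and the inequality $k(X)\le k(N)\,k(X/N)$ (the paper's Lemma \ref{lN}, attributed there to Nagao), and multiply by a bound of the shape $3^{(t-1)/2}$ for the permutation quotient; the final arithmetic is also the paper's. However, two of your steps are not actually carried out, and one of them is the heart of the matter. The permutation bound $k(P)\le\frac{2}{3}3^{t/2}$ for $P\le S_{t}$ is not routine: up to the constant it is Mar\'oti's theorem \cite{M}, whose proof is a separate paper and whose primitive case rests on classification-based order bounds; your trichotomy sketch is the right strategy, but you should cite \cite{M} rather than promise a proof.

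More seriously, the estimate $k(K)<b^{t}$ is exactly the step you leave open, and the repair you propose does not work as described: when $P$ is primitive there are no nontrivial block systems to refine, and for a general transitive $P$ no ordering of the coordinates makes every successive section $D_{j-1}/D_{j}$ land as a normal subgroup of the full block-stabiliser image $\bar{L}_{j}$ (that would essentially force each point stabiliser $P_{j}$ to preserve the initial segment $\{1,\dots,j-1\}$, which already fails for $P=S_{t}$, $t\ge 3$). The paper closes this step differently and more simply: it runs the induction on the subdirect product $H\le H_{1}\times\cdots\times H_{t}$ one coordinate at a time, using only that the kernel of the projection onto the first $t-1$ coordinates embeds, automatically, as a normal subgroup of the last block image $H_{t}=\bar{K}_{t}$, and it formulates the class-number hypothesis for normal subgroups of the block images $H_{i}$ themselves; Lemma \ref{lN} and induction then give $k(H)<(|W|/\sqrt{3})^{t}$ with no equivariance needed. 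You are right that if one insists on reading the hypothesis as concerning only normal subgroups of $L/C_{L}(W)$, the sections produced are merely subnormal there and a genuine gap remains (only the first kernel is visibly $\bar{L}$-normal); but the intended cure is to apply the hypothesis to the normal subgroups of the $H_{i}$, which is how the paper's proof actually uses it, not to build an equivariant chain.
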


Note that the bound $k(N) < |W|/\sqrt{3}$ in Theorem
\ref{irreducible} is satisfied for all normal subgroups $N$ of
`many' $L$. For example, if $L$ is the $G$ and $W$ in the $V$
considered in Section 5, then $|L| < |W|/\sqrt{3}$ for
sufficiently large characteristics.

Finally, it may be possible that the $2$-power estimates for
$k(GV)$ in Theorem \ref{extraspecial} and in Theorem
\ref{primitive} can all be taken to be $11$ as $k(AGL(2,3)) = 11$.

\section{Bounding the dimensions of eigenspaces}

Throughout this section we will use the following notations and
assumptions. Let $r$ be a prime. An $r$-group $R$ is said to be of
symplectic type if either $r$ is odd and $R$ is extraspecial of
exponent $r$, or $r=2$, $R/Z(R)$ is elementary abelian, $R'$ has
order $2$, $R$ has exponent $4$ and $Z(R)$ has order $2$ (in which
case $R$ is extraspecial) or has order $4$. Let $V$ denote a
faithful irreducible $FG$-module where $F$ is an algebraically
closed field of characteristic $p > 0$ and $G$ is a finite group.
Suppose that the group $G$ has a normal subgroup $R$ of symplectic
type with $|R/Z(R)| = r^{2a}$ for some prime $r$ and positive
integer $a$. This $r$-group $R$ acts absolutely irreducibly on
$V$, $\dim_{F}(V) = r^{a}$, and $O_{p}(G) = 1$. Suppose that $R$
is such that $Z(G) = Z(R)$. The non-identity elements of $G/R$ act
faithfully on $R/Z(R)$ and trivially on $Z(R)$.

Let $x$ be an element of $G$. For a field element $\lambda \in F$
we denote the eigenspace of $\lambda$ of a matrix representation
of $x$ on $V$ by $\mathrm{Eigen}(\lambda,x)$. In this section we
wish to bound $d(x) = \max_{\lambda \in F} \dim
(\mathrm{Eigen}(\lambda,x))$ (but in the end we will be interested
in $\dim_{F}(C_{V}(x))$).

Let the element $x$ be in $R$. If $x$ is central then $d(x) =
r^{a}$. Otherwise if $x$ is non-central in $R$ then the value of
the character (of $V$) at $x$ is $0$ and so $d(x) = r^{a-1}$.

From now on, assume that $x \in G \setminus R$.

The following important theorem considers the case when $\langle x
\rangle$ is irreducible on the vector space $R/Z(R)$ and has order
a power of $p$.

\begin{thm}[Hall-Higman, \cite{HH}]
Use the notations and assumptions of this section. Let $x$ be an
element of $G \setminus R$ of prime power order $q$ divisible by
$p$, the characteristic of the field $F$. Assume that $\langle x
\rangle$ acts irreducibly on $R/Z(R)$ (where $|R/Z(R)|=r^{2a}$).
Then there exists a non-negative integer $b$ so that $\dim(V) =
r^{a} = (q-1) + bq$, and the Jordan canonical form of $x$ on $V$
consists of $b+1$ Jordan blocks, $b$ of size $q$ and $1$ of size
$q-1$. In particular, $d(x) = \dim(C_{V}(x)) = b+1 = (r^{a}+1)/q$.
\end{thm}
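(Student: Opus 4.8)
The plan is to study $V$ as a module for the cyclic $p$-group $\langle x \rangle$ and to recover its Jordan type from the far more transparent module $V \otimes V^{*}$. Since $q = p^{m}$ and $x^{q} = 1$, the operator $x$ is unipotent on $V$, so its Jordan type is a partition $\lambda$ of $r^{a}$ with all parts at most $q$, and $d(x) = \dim C_{V}(x)$ is precisely the number of parts of $\lambda$. The goal is thus to show $\lambda = (q^{b}, q-1)$.

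First I would record the divisibility $q \mid r^{a}+1$, which is exactly what makes $r^{a} = (q-1)+bq$ solvable with $b \ge 0$. Conjugation by $x$ is an automorphism of $R$ fixing $Z(R)$ pointwise, so $x$ preserves the nondegenerate alternating form on $R/Z(R) \cong \mathbb{F}_{r}^{2a}$ induced by the commutator map $R/Z(R) \times R/Z(R) \to R' \cong \mathbb{F}_{r}$; hence $x \in \Sp(2a,r)$. As $\langle x \rangle$ is irreducible on $R/Z(R)$, the $\mathbb{F}_{r}[x]$-module $R/Z(R)$ is the field $\mathbb{F}_{r^{2a}}$ with $x$ acting as multiplication by some $\zeta$ of order $q$, and the symplectic form can be written as $(u,v) \mapsto \Tr_{\mathbb{F}_{r^{2a}}/\mathbb{F}_{r}}(u\,\sigma(v))$ for the involution $\sigma\colon w \mapsto w^{r^{a}}$. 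Invariance under $x$ then reads $\zeta^{\,1+r^{a}} = 1$, i.e.\ $q \mid r^{a}+1$; I set $b+1 = (r^{a}+1)/q \ge 1$.

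Next I would compute $V \otimes V^{*}$ exactly. As an $FR$-module $\operatorname{End}_{F}(V) \cong V \otimes V^{*}$ is the sum of all $r^{2a}$ linear characters of the elementary abelian quotient $R/Z(R)$, each with multiplicity one (the standard endomorphism-algebra description of the faithful irreducible module of a group of symplectic type). The element $x$ permutes these character summands exactly as it acts on $R/Z(R)$: it fixes the trivial one and, acting irreducibly of order $q$, splits the remaining $r^{2a}-1$ characters into $N := (r^{2a}-1)/q$ free orbits of size $q$. Each free orbit spans a copy of the regular module $F\langle x \rangle \cong F[t]/((t-1)^{q})$, a single Jordan block $J_{q}$, and the fixed summand is $J_{1}$, so as $\langle x \rangle$-modules
\[
  V \otimes V^{*} \;\cong\; J_{1} \oplus J_{q}^{\oplus N}.
\]
I would then read off $\lambda$ from this identity using the tensor calculus for $\langle x \rangle$: for $1 \le c,d \le q$ the product $J_{c}\otimes J_{d}$ has $\min(c,d)$ blocks in total, of which $\max(0,c+d-q)$ are the projective block $J_{q}$. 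Since $J_{\ell}^{*}\cong J_{\ell}$, summing over ordered pairs of parts of $\lambda$ shows $V\otimes V^{*}$ has exactly $1$ non-projective block. Each pair $(c,d)$ contributes $\min(c,d)$ non-projective blocks if $c+d\le q$ and $q-\max(c,d)$ if $c+d>q$; both are non-negative, the latter vanishing precisely when $\max(c,d)=q$. Forcing the total to equal $1$ shows at most one part of $\lambda$ is $<q$, and the congruence $\sum_{i}\lambda_{i}=r^{a}\equiv -1 \pmod q$ then forces that part to be $q-1$; counting dimensions gives $b$ parts equal to $q$. Hence $\lambda=(q^{b},q-1)$ and $d(x)=\dim C_{V}(x)=b+1=(r^{a}+1)/q$.

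The main obstacle is the input to the last paragraph: the modular tensor-product rule over $F\langle x \rangle$ with $\langle x\rangle$ cyclic of order $q=p^{m}$, in particular the count $\max(0,c+d-q)$ of projective summands, where ordinary Clebsch--Gordan breaks down near the boundary block size $q$. I would extract exactly what is needed from two soft facts: tensoring with the projective $J_{q}$ stays projective, giving $J_{c}\otimes J_{q}\cong J_{q}^{\oplus c}$, and the total block count of $J_{c}\otimes J_{d}$ is $\dim(J_{c}\otimes J_{d})^{\langle x\rangle}=\min(c,d)$ by self-duality; these determine the projective-summand count without the full decomposition. A secondary point requiring care is the case $p=2$ of the arithmetic step, where $(\mathbb{Z}/q)^{\times}$ has several involutions, so the symplectic constraint $x\in\Sp(2a,r)$ — not mere irreducibility — is genuinely needed to force $q\mid r^{a}+1$.
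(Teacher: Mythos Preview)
Your argument is correct and shares with the paper's sketch the same central device: study $E=\mathrm{End}_F(V)\cong V\otimes V^{*}$, observe that as an $R$-module it is the direct sum of the $r^{2a}$ linear characters of $R/Z(R)$, and exploit that $\langle x\rangle$ permutes these with one fixed point and $(r^{2a}-1)/q$ free orbits. The executions diverge from there. The paper (following Hall--Higman) extracts only the scalar invariant $\dim_F C_E(x)=1+(r^{2a}-1)/q$, matches it against the Jordan-form identity $\dim_F C_E(x)=\sum_i(2i-1)a_i$ for the block sizes $a_1\ge\cdots\ge a_m$ on $V$, and then solves the pair of constraints $\sum a_i=r^a$, $\sum(2i-1)a_i=1+(r^{2a}-1)/q$ with $a_i\le q$; uniqueness comes from an implicit Cauchy--Schwarz (sum-of-squares minimisation on the conjugate partition). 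You instead identify the full $\langle x\rangle$-module structure $E\cong J_1\oplus J_q^{\oplus N}$ and use the Krull--Schmidt decomposition of $\bigoplus_{i,j}J_{\lambda_i}\otimes J_{\lambda_j}$: the fact that there is exactly one non-projective summand forces at most one $\lambda_i<q$, and the divisibility $q\mid r^a+1$ (which you establish independently from the symplectic constraint, with the $p=2$ case correctly flagged) then pins that part down as $q-1$. The paper's route is more elementary, needing only a centralizer-dimension formula and an inequality; yours is more module-theoretic, and has the pleasant feature that the arithmetic $q\mid r^a+1$ is proved up front rather than emerging at the end. One small remark: your two ``soft facts'' do not by themselves yield the exact count $\max(0,c+d-q)$ of projective summands in $J_c\otimes J_d$, but your argument does not actually need that count --- it suffices that $J_c\otimes J_d$ is non-free whenever $c,d<q$, which follows from the block count $\min(c,d)$ together with $\min(c,d)\cdot q>cd$.
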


It is necessary to say a few words about the proof of the
Hall-Higman theorem. Put $x$ (viewed as a linear transformation of
$V$) in Jordan canonical form. Suppose that $x$ has $m$ Jordan
blocks of sizes: $a_{1}, \ldots , a_{m}$. We seek to find the
$a_i$'s explicitly. We certainly have one restriction, namely,
$\dim(V) = r^{a} = \sum_{i=1}^{m} a_{i}$. For another one, let $E$
be the enveloping algebra of the group of linear transformations
$R$ of $V$. Then $E = \mathrm{End}(V)$ (and so $\dim_{F}(E) =
r^{2a}$). Hall and Higman proceed to calculate
$\dim_{F}(C_{E}(x))$ in two different ways. On one hand, this is
$\sum_{i=1}^{m} (2i-1)a_{i}$, while on the other, it is $1+
(r^{2a}-1)/q$, the number of $\langle x \rangle$-orbits of the set
$R/Z(R)$. This gives our second restriction on the $a_{i}$'s. It
turns out that these two restrictions are sufficient to determine
the $m$ non-negative integers.

Now let $x$ be an element of $G \setminus R$ of prime power order
$q$ such that $p$ does {\it not} divide $q$. As before, suppose
that $\langle x \rangle$ is irreducible on the vector space
$R/Z(R)$. By this we are also assuming that $q$ is not a power of
$r$. (The following argument is taken from the series of exercises
in \cite[Pages 371-372]{G}.) In this case the Jordan canonical
form of $x$ on $V$ is a diagonal matrix (since $F$ is
algebraically closed). Let the number of distinct eigenvalues of
$x$ be $m$, and let $a_{i}$ be the multiplicity of the $i$-th
eigenvalue. Then $r^{a} = \sum_{i=1}^{m} a_{i}$. Again, let $E$ be
the enveloping algebra of $R$. Clearly, $\dim_{F}(C_{E}(x)) =
\sum_{i=1}^{m} {a_{i}}^{2}$. On the other hand, it is easy to see
that $\dim_{F}(C_{E}(x))$ is again the number of $\langle x
\rangle$-orbits of the set $R/Z(R)$. This gives us two equations
involving the $a_{i}$'s which are sufficient to determine the $m$
non-negative integers we are looking for. In particular, we find
that the multiplicity of any eigenvalue is at most $(r^{a}+1)/q$.
Hence $d(x) \leq (r^{a}+1)/q$ (as in the case when $q$ {\it was} a
power of $p$).

Using the same argument as before, one can show even more.

\begin{lem}
\label{HH} Use the notations and assumptions of this section. Let
$x$ be an element of $G \setminus R$ so that $\langle x \rangle$
is irreducible on $R/Z(R)$. Let the order of $x$ be $m$. (The
positive integer $m$ divides $r^{2a}-1$.) Then $d(x) \leq
(r^{a}+1)/m$.
\end{lem}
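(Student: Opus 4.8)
The plan is to reduce everything to two pieces of data about $x$: the dimension of its centralizer $C_E(x)$ in the enveloping algebra $E=\operatorname{End}(V)$, and the coarse location of its eigenvalues, and to feed these into the same orbit-counting computation used above. First I would record the structural constraint on $m$ that makes the target bound sensible. Since $m=\operatorname{ord}(x)$ divides $r^{2a}-1$ it is coprime to $r$, so $\langle x\rangle\cap Z(R)=1$ and $x$ maps to an element $\bar x$ of $G/R$ of order exactly $m$ acting irreducibly on $W=R/Z(R)\cong\mathbb{F}_{r}^{2a}$. Because $x$ centralizes $Z(R)$, the element $\bar x$ preserves the nondegenerate alternating form on $W$ given by commutation, i.e. $\bar x\in\Sp(2a,r)$. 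For an irreducible symplectic transformation the set of eigenvalues (a single Frobenius orbit $\{\theta,\theta^{r},\dots,\theta^{r^{2a-1}}\}$) is stable under $\lambda\mapsto\lambda^{-1}$, forcing $\theta^{r^{a}}=\theta^{-1}$ and hence $m\mid r^{a}+1$. Thus $t_{0}:=(r^{a}+1)/m$ is a positive integer and the assertion to be proved is exactly $d(x)\le t_{0}$. Next, as in the two cases treated above, I would compute $\dim_F C_E(x)$ by orbit counting: conjugation by $x$ permutes the basis of $E$ indexed by $W$ up to scalars, and since $\bar x$ acts fixed-point-freely on the nonzero vectors of $W$ with every orbit of size $m$, the fixed space has dimension equal to the number of $\langle x\rangle$-orbits on $W$, namely $\dim_F C_E(x)=1+(r^{2a}-1)/m=1+(r^{a}-1)t_{0}$.

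For the semisimple case ($p\nmid m$, so $x$ is diagonalizable over $F$) I would pass to the Brauer character and recover the eigenvalue multiplicities by Fourier inversion over $\langle x\rangle$: writing $a_{\lambda}$ for the multiplicity of an $m$-th root of unity $\lambda$,
\[
a_{\lambda}=\frac1m\Big(r^{a}+\sum_{k=1}^{m-1}\lambda^{-k}\operatorname{tr}(x^{k})\Big).
\]
The key input is that $|\operatorname{tr}(x^{k})|=1$ for $1\le k\le m-1$: indeed $\operatorname{tr}(x^{k})\overline{\operatorname{tr}(x^{k})}=\operatorname{tr}_{E}(\text{conjugation by }x^{k})$, and the only conjugation-fixed basis vector of $E$ is the identity because $\bar x^{k}$ is fixed-point-free. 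Hence $|a_{\lambda}-r^{a}/m|<1$; since $m\mid r^{a}+1$ places $r^{a}/m$ just below the integer $t_{0}$, every multiplicity is at most $t_{0}$, giving $d(x)\le t_{0}$ in this case.

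It remains to handle $p\mid m$ (which occurs precisely when $p\mid r^{a}+1$), and this is where I expect the main difficulty. Write $x=su$ with $s$ the $p'$-part and $u$ the commuting unipotent part; then $\mathrm{Eigen}(\lambda,x)=C_{\mathrm{Eigen}(\lambda,s)}(u)$, so $\dim\mathrm{Eigen}(\lambda,x)$ is the number of Jordan blocks of $u$ on the generalized $\lambda$-eigenspace of $s$. The semisimple estimate controls only $s$ and yields the weaker bound $(r^{a}+1)/m'$, where $m'$ is the $p'$-part of $m$; to recover the full factor $m$ one must show, in Hall--Higman style, that $u$ acts with blocks large enough that their number on each eigenspace of $s$ is at most $t_{0}$. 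I would prove this by running the centralizer computation once more for $\langle x\rangle$ and checking that the known value $\dim_F C_E(x)=1+(r^{a}-1)t_{0}$ is consistent only with block structures whose block counts are all $\le t_{0}$ — exactly the rigidity Hall and Higman exploit in the pure $p$-power case, now combined with the eigenvalue equidistribution coming from $|\operatorname{tr}(x^{k})|=1$. The hardest step is thus converting the soft averaging estimate into the sharp integer bound while simultaneously controlling the unipotent block sizes; everything else is a routine reuse of the orbit-counting identity established above.
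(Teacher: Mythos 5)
Your treatment of the semisimple case is correct and takes a genuinely different, and in fact sharper, route than the paper. The paper's (largely implicit) argument extracts only the two moments $\sum_i a_i = r^a$ and $\sum_i a_i^2 = \dim_F C_E(x) = 1+(r^{2a}-1)/m$ and then relies on the rigidity of the balanced partition as the unique minimizer of the second moment; you instead use the pointwise estimate $|\mathrm{tr}(x^k)|=1$ for $1\le k\le m-1$ (whose squares, summed over $k$, recover the paper's second moment by Parseval) and read off $a_\lambda\le (r^a+1)/m$ directly by Fourier inversion, with no appeal to uniqueness of the extremal configuration. Your preliminary observation that irreducibility inside $Sp(2a,r)$ forces the stronger divisibility $m\mid r^a+1$ is also correct and worth making explicit: the paper records only $m\mid r^{2a}-1$, yet the integrality of $(r^a+1)/m$ is exactly what makes the extremal multiplicity pattern attainable. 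The orbit count $\dim_F C_E(x)=1+(r^{2a}-1)/m$ is justified as in the paper (all $\langle x\rangle$-orbits on the nonzero vectors of $R/Z(R)$ have length exactly $m$, and the return scalars are trivial because $x^m=1$).

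The one genuine gap is the mixed case $p\mid m$ with $m$ not a power of $p$, which you flag but do not close. To be fair, the paper is no more explicit here (``using the same argument as before, one can show even more''), so you are not missing an idea the paper supplies; but your sketch as written does not yet work. For non-semisimple $x$ the identity $\dim_F C_E(x)=1+(r^{2a}-1)/m$ unpacks as $\sum_\lambda\sum_{i,j}\min(b_i^\lambda,b_j^\lambda)$ over the Jordan block sizes $b_i^\lambda$ of the unipotent part on each eigenspace of the semisimple part $s$, and one cannot simply rerun the semisimple analysis on $s=x^{p^e}$, since $\langle s\rangle$ need no longer act irreducibly on $R/Z(R)$. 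So the rigidity you invoke has to be established for this more general block structure (in Hall--Higman style, combining the block-size constraints with the eigenspace-dimension constraints); that remains to be written down before the lemma is proved in the stated generality.
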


The Jordan canonical form of a matrix is a block matrix consisting
of Jordan blocks in the main diagonal and zero matrices everywhere
else where a Jordan block is a block matrix with the same
companion matrix in the diagonal, identity matrices just above the
diagonal and zero matrices everywhere else.

At this point let us mention another result.

\begin{lem}
\label{HHtwoblocks} Use the notations and assumptions of this
section. Let $x$ be an element of $G \setminus R$, and let $R_1$,
$R_2$ be two maximal abelian subgroups of $R$ whose intersection
is $Z(R)$. Suppose that the Jordan canonical form of $x$ on
$R/Z(R)$ consists of two $a$-by-$a$ Jordan blocks that are the
same where one leaves $R_{1}/Z(R)$ invariant and the other leaves
$R_{2}/Z(R)$ invariant. Suppose that $\langle x \rangle$ is
irreducible on both $R_{1}/Z(R)$ and on $R_{2}/Z(R)$, and $x$ has
order $m$. Then $d(x) \leq 1 + (r^{a}-1)/m$.
\end{lem}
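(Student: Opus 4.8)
The plan is to adapt the two-equation argument from the preceding paragraphs and from Lemma~\ref{HH} to the present situation where $\langle x \rangle$ does not act irreducibly on all of $R/Z(R)$ but instead stabilizes a decomposition $R/Z(R) = R_1/Z(R) \oplus R_2/Z(R)$ into two isomorphic irreducible pieces. First I would let $E = \mathrm{End}(V)$ again be the enveloping algebra of $R$, so that $\dim_F(C_E(x))$ equals the number of $\langle x \rangle$-orbits on $R/Z(R)$. The crucial input is that $x$ acts on $R/Z(R)$ as two equal $a$-by-$a$ Jordan blocks, one on each $R_i/Z(R)$, with $\langle x \rangle$ irreducible of order $m$ on each piece. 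I would compute the orbit count on $R/Z(R)$ by organizing the $r^{2a}$ vectors according to their projections into $R_1/Z(R)$ and $R_2/Z(R)$: the fixed point $0$ contributes one orbit, and the remaining $r^{2a}-1$ non-zero vectors must be counted carefully, since the diagonal action on a direct sum of two copies of the same irreducible module can have orbits that are shorter than $m$.

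The technical heart of the argument is this orbit count. Because $x$ acts the same way on the two summands, the action on the direct sum is the ``diagonal'' action of a cyclic group of order $m$, and by Burnside/orbit-counting the number of orbits is $\frac{1}{m}\sum_{d \mid m}\varphi(m/d)\,|\mathrm{Fix}(x^d)|$, where $|\mathrm{Fix}(x^d)|$ is the number of vectors in $R/Z(R)$ fixed by $x^d$. Since $x$ is irreducible of order $m$ on each $a$-dimensional piece, $x^d$ fixes only $0$ on each piece unless $x^d$ is the identity there; hence $|\mathrm{Fix}(x^d)| = 1$ for every proper power and $= r^{2a}$ when $x^d$ acts trivially. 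The upshot is that every non-fixed orbit has full length $m$, so the number of orbits is exactly $1 + (r^{2a}-1)/m$, giving $\dim_F(C_E(x)) = 1 + (r^{2a}-1)/m$. I would then set up the second equation from the diagonalized (or Jordan) form of $x$ on $V$: writing $a_1, \ldots, a_k$ for the eigenvalue multiplicities (the block sizes), one has $\sum a_i = r^a$ and $\sum a_i^2 = \dim_F(C_E(x)) = 1 + (r^{2a}-1)/m$.

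From these two constraints $\sum a_i = r^a$ and $\sum a_i^2 = 1 + (r^{2a}-1)/m$ I would extract the bound on $d(x) = \max_i a_i$. The key inequality is that a single multiplicity $a_j$ satisfies $a_j^2 \le \sum_i a_i^2 - (\,\text{contribution of the others}\,)$; more usefully, I would argue that the largest block cannot exceed the stated value by combining the sum-of-squares with the sum. Concretely, if some $a_j$ were larger than $1 + (r^a-1)/m$, then writing $a_j = 1 + (r^a-1)/m + \varepsilon$ and using the Cauchy--Schwarz or convexity relationship between $\sum a_i$ and $\sum a_i^2$ under the fixed total $r^a$ leads to a contradiction with $\sum a_i^2 = 1 + (r^{2a}-1)/m = 1 + (r^a-1)(r^a+1)/m$. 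This last algebraic manipulation is where I would be most careful, but it is essentially the same extremal-configuration analysis that determines the $a_i$ uniquely in the Hall--Higman calculation.

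The main obstacle I anticipate is justifying rigorously that every non-trivial orbit on $R/Z(R)$ has exact length $m$, i.e. that no vector has a smaller stabilizer under the diagonal action. This requires the irreducibility of $\langle x \rangle$ on \emph{each} $R_i/Z(R)$ to guarantee that no proper power $x^d$ (with $d < m$) fixes a non-zero vector in either summand, and hence none in the diagonal sum either; the hypothesis that $x$ acts by \emph{equal} blocks is exactly what prevents a proper power from acting as a scalar on one summand while moving the other. Once the orbit count $1 + (r^{2a}-1)/m$ is secured, the passage to $d(x) \le 1 + (r^a-1)/m$ is a short optimization, so the representation-theoretic orbit bookkeeping is the step that carries the real weight.
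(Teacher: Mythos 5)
Your overall strategy --- counting $\langle x\rangle$-orbits on $R/Z(R)$ via the enveloping algebra $E$ and then extracting $d(x)$ from the two equations $\sum a_i = r^a$ and $\sum a_i^2 = 1+(r^{2a}-1)/m$ --- is a legitimate adaptation of the Hall--Higman scheme, and your orbit count is right: the equal-blocks hypothesis does force every non-trivial orbit to be regular, so $\dim C_E(x)=1+(r^{2a}-1)/m$. The genuine gap is in the final step, which you present as a short optimization ``from these two constraints'': the two constraints alone do \emph{not} imply $\max_i a_i\le 1+(r^a-1)/m$. Concretely, take $r^a=9$ and $m=2$, so the constraints read $\sum a_i=9$ and $\sum a_i^2=41$; the partition $(6,2,1)$ satisfies both, yet $6>5=1+(9-1)/2$. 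What rescues the argument is a third piece of information you never invoke: since $x^m=1$, the number of distinct eigenvalues is at most $m$, and only with this bound does the convexity estimate $\sum_{i\ge 2}a_i^2\ge (r^a-M)^2/(m-1)$ produce a contradiction for $M\ge 2+(r^a-1)/m$ (one computes $M^2+(r^a-M)^2/(m-1)\ge S+3+\tfrac{1}{m-1}$ at $M=2+(r^a-1)/m$). Without naming that constraint, ``Cauchy--Schwarz between $\sum a_i$ and $\sum a_i^2$'' has no content, so as written the proof does not close. A secondary issue: the lemma carries no coprimality hypothesis and is in fact applied with $p\mid m$ (e.g.\ parts of type $B(2,1)$ of order $2$ in characteristic $2$); there $x$ is not diagonalizable on $V$, $\dim C_E(x)=\sum a_i^2$ is the wrong second equation, and you would need the Jordan-block variant $\sum_i(2i-1)a_i$ together with a separate extremal analysis.

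For comparison, the paper's proof sidesteps all of this bookkeeping. It diagonalizes the maximal abelian subgroup $R_1$ on $V$, so that $V$ splits into $r^a$ one-dimensional weight spaces permuted by $x$; the irreducibility and equal-blocks hypotheses give exactly one fixed weight space and $(r^a-1)/m$ regular orbits, so $V$ restricted to $\langle x\rangle$ is a one-dimensional module plus free $F\langle x\rangle$-modules, each of which contributes at most $1$ to any eigenspace. That argument is two lines long and characteristic-free; if you repair your route you will essentially be re-deriving the same structural fact in coordinates.
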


\begin{proof}
The group $R$, which is a product of the maximal abelian subgroups
$R_1$ and $R_2$, acts absolutely irreducibly on the vector space
$V$. We can diagonalize $R_1$ (and $R_{2}$) on $V$ and all
eigenspaces are one dimensional.

Apart from a single eigenspace, the element $x$ permutes all other
eigenspaces in regular orbits. This means that $V$ is a direct sum
of a single module of dimension $1$ and some free $\langle x
\rangle$-modules. Hence, $d(x) \leq 1 + (r^{a}-1)/m$.
\end{proof}

Let us modify the proof of the Hall-Higman theorem to include the
case when $x$ is an element of order a power of $r$. In this case
we cannot assume irreducibility. Instead, suppose that the Jordan
canonical form of our element $x \in G \setminus R$ viewed as a
linear transformation of $R/Z(R)$ consists of a unique Jordan
block of size $2a$ or of two Jordan blocks each of size $a$. In
the latter case suppose that $x$ leaves two maximal totally
singular subspaces of $R/Z(R)$ invariant, both of order $r^{a}$.

Notice that a Jordan block of an $r$-element is a matrix with
$1$'s in the main diagonal, $1$'s in the diagonal just above the
main diagonal, and $0$'s elsewhere. Hence the order of $x$ is
$r^{k}$ and $r^{\ell}$, respectively, where $k$ and $\ell$ are the
smallest non-negative integers such that $r^{k} \geq 2a$ and
$r^{\ell} \geq a$, respectively.

Observe (as in the previous two cases) that the Jordan canonical
form of $x$ viewed as a linear transformation on $V$ is a diagonal
matrix. Hence if $a_{1}, \ldots , a_{m}$ denotes the list of the
multiplicities of the distinct eigenvalues of $x$, then $r^{a} =
\sum_{i=1}^{m} a_{i}$. Again let $E$ be the enveloping algebra of
$R$. Since $x$ can be diagonalized on $V$, we certainly have $\dim
(C_{E}(x)) = \sum_{i=1}^{m} a^{2}_{i}$. However, $\dim (C_{E}(x))$
is {\it not} necessarily equal to the number $d$ of $\langle x
\rangle$-orbits of the set $R/Z(R)$.

Let us number the $\langle x \rangle$-orbits of the set of all
$r^{2a}$ vectors of $R/Z(R)$ from $1$ to $d$, and for all $1 \leq
i \leq d$ let $v_{i}$ be a representative of a coset in the $i$-th
orbit. If $\ell_{i}$ denotes the length of the $i$-th orbit, then
the elements $v^{x^j}_{i}$ form a set of coset representatives for
the cosets of $R/Z(R)$ where $i$ and $j$ run through the set of
numbers $1, \ldots , d$ and $1, \ldots , \ell_{i}$, respectively.

We claim that $\dim(C_{E}(x))$ is equal to the number of $i$'s for
which $v_{i} = {v_{i}}^{x^{\ell_{i}}}$.

For each $i$ let $E_{i}$ be the $\langle x \rangle$-invariant
subspace of $E$ generated by the vector $v_{i}$. It is easy to see
that $C_{E}(x) = \sum_{i=1}^{d}C_{E_i}(x)$. This implies that, in
order to prove the claim, it is sufficient to show that $\dim
(C_{E_i}(x)) = 1$ for all $1 \leq i \leq d$ for which $v_{i} =
{v_{i}}^{x^{\ell_{i}}}$ and that $\dim (C_{E_i}(x)) = 0$
otherwise. This is clear for those $i$'s for which $\ell_{i} = 1$.
It is also clear that if $i$ is so that $v_{i} =
{v_{i}}^{x^{\ell_{i}}}$, then $\dim(C_{E_{i}}(x)) \geq 1$. So let
$i$ be such that $\ell_{i} > 1$ and that $\dim(C_{E_{i}}(x)) > 0$.
Let $v \in C_{E_{i}}(x)$ be an arbitrary non-zero element. Write
$v$ in the form $\sum_{j=1}^{\ell_{i}} c_{j} v^{x^j}_{i}$ for some
field elements $c_{j}$ of $F$. Since $v$ is $\langle x
\rangle$-invariant and since the $v^{x^j}_{i}$'s are linearly
independent, it follows that all the $c_{j}$'s are equal. Hence
$C_{E_i}(x)$ is indeed $1$-dimensional. This proves our claim.

It remains to find an expression for $d$. Recall that there are
two cases we are interested in: if the Jordan canonical form of
$x$ considered as a linear transformation on the vector space
$R/Z(R)$ consists of a unique $2a$-by-$2a$ Jordan block or if it
consists of two $a$-by-$a$ Jordan blocks. In the first case let us
denote $d$ by $d_1$ while in the second case denote $d$ by $d_2$.

First suppose that the Jordan canonical form of $x$ consists of a
unique Jordan block. Then, as noted before, the order of $x$ is
$r^{k}$ where $k$ is the smallest positive integer such that
$r^{k} \geq 2a$. Every $\langle x \rangle$-orbit has prime power
length. It is easy to see that the number of orbits of length $1$
is $r = r^{\min \{ r^{0},2a \}}$, and for each $1 \leq i \leq k$
the number of orbits of length $r^i$ is $(1/r^{i}) \cdot ( r^{\min
\{ r^{i},2a \}}-r^{\min \{ r^{i-1},2a \} })$. This gives
\begin{equation}
\label{orbits1} d_{1} = r + \sum_{i=1}^{k} (1/r^{i}) \cdot
(r^{\min \{ r^{i},2a \}}-r^{\min \{ r^{i-1},2a \} }).
\end{equation}

By a similar argument, if the Jordan canonical form of $x$
consists of two $a$-by-$a$ Jordan blocks, then
\begin{equation}
\label{orbits2} d_{2} = r^{2} + \sum_{i=1}^{\ell} (1/r^{i}) \cdot
(r^{2 \min \{ r^{i},a \}}-r^{2 \min \{ r^{i-1},a \} })
\end{equation}
where $\ell$ is the smallest positive integer such that $r^{\ell}
\geq a$.

In the first case it is easy to see that $d_{1} \leq (1/4) \cdot
r^{2a}$ unless $a=1$ and $x$ has order $2$, $3$, $5$, or $7$. Let
$a=1$. If $x$ has order $2$, then $d(x) = \max_{i} \{ a_{i} \} =
1$. If $x$ has order $3$ or $5$, then $d(x) \leq 2$. If $x$ has
order $7$, then $d(x) \leq 3$. In all cases we have $d(x) \leq
((r+1)/2r) \cdot r^{a}$. In fact, we have $d(x) \leq (1/2) \cdot
r^{a}$ unless $a=1$, $r=3$ and $x$ has order $3$ in which case
$d(x) \leq (2/3) \cdot r^{a}$ holds. Let us summarize this result
in the following lemma.

\begin{lem}
\label{oneblock} Let us use the notations and assumptions of the
first paragraph of this section. Let $x$ be an element of $G
\setminus R$. Suppose that the Jordan canonical form of $x$ on
$R/Z(R)$ consists of a unique $2a$-by-$2a$ Jordan block, and that
$x$ has order a power of $r$. Then $d(x)/\dim_{F}(V) \leq
((r+1)/2r)$. Moreover we have $d(x) \leq (1/2) \cdot r^{a}$ unless
$a=1$, $r=3$ and $x$ has order $3$ in which case $d(x) \leq (2/3)
\cdot r^{a}$ holds.
\end{lem}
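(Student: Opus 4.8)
The plan is to analyze the formula (\ref{orbits1}) for $d_1$ directly and then convert the bound on $d_1$ into a bound on $d(x)$. Recall that $d(x) = \max_i \{a_i\}$ where the $a_i$ are the multiplicities of the eigenvalues of $x$ on $V$, subject to $\sum a_i = r^a$ and $\sum a_i^2 = d_1$. Since $d_1$ counts the $\langle x \rangle$-orbits on $R/Z(R)$ and equals $\sum a_i^2$, the key observation is that the largest multiplicity satisfies $d(x) = \max_i a_i \le \sqrt{\sum a_i^2} = \sqrt{d_1}$. So my first step is to establish a clean upper bound on $d_1$, and then take square roots.

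First I would examine the sum in (\ref{orbits1}). Writing it out, the terms telescope in the exponents: the orbits of length $r^i$ contribute $(1/r^i)(r^{\min\{r^i,2a\}} - r^{\min\{r^{i-1},2a\}})$, and once $r^{i-1} \ge 2a$ the bracket vanishes, so only finitely many terms survive. The dominant contribution comes from the largest surviving $i$ (call it $k$), where we pick up roughly $(1/r^k) r^{2a}$. Since $r^k \ge 2a$ by definition of $k$, this dominant term is at most $r^{2a}/(2a)$, and the remaining terms are geometrically smaller. Thus I expect to show $d_1 \le r^{2a}/(2a) \cdot (1 + o(1))$ for large $a$, which already gives $d(x) \le \sqrt{d_1} \le r^a/\sqrt{2a}$; this is far stronger than $((r+1)/2r) r^a$ once $a$ is moderately large, so the inequality $d(x) \le (1/2) r^a$ holds comfortably in that regime. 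The bulk of the work is therefore reduced to the small cases $a = 1, 2, 3$ (and small $r$), where the $r^{2a}/(2a)$ estimate is not yet decisive.

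For the finitely many small cases, the strategy is to compute $d_1$ exactly from (\ref{orbits1}) and then determine the maximal multiplicity $\max_i a_i$ by hand from the two constraints $\sum a_i = r^a$ and $\sum a_i^2 = d_1$, possibly using additional structural information (the eigenvalues of an $r$-element that acts as a single Jordan block on $R/Z(R)$ are constrained, so the multiplicity pattern is forced). This is exactly the route flagged in the text: for $a = 1$ one checks orders $q = 2, 3, 5, 7$ individually and reads off $d(x) = 1, 2, 2, 3$ respectively, confirming $d(x) \le ((r+1)/2r) r^a$ in each instance and isolating the single exceptional case $a = 1, r = 3, |x| = 3$ where $d(x) = 2 = (2/3) \cdot 3$. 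For $a \ge 2$ I would verify the claimed threshold $d_1 \le (1/4) r^{2a}$ directly from the formula, which immediately yields $d(x) \le \sqrt{d_1} \le (1/2) r^a$.

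The main obstacle is the bookkeeping in the small cases: the bound $d(x) \le \sqrt{d_1}$ is generally not tight (equality requires a single nonzero multiplicity), so for the borderline cases one cannot simply take square roots but must pin down the actual multiplicity distribution of the eigenvalues. This requires understanding precisely how an $r$-element whose action on $R/Z(R)$ is a single $2a \times 2a$ Jordan block decomposes $V$ into eigenspaces. I expect the cleanest path is to note that such an $x$ has order $r^k$ with $r^k \ge 2a$, that its eigenvalues on $V$ are $r^k$-th roots of unity, and that the orbit-counting in (\ref{orbits1}) simultaneously controls both $\sum a_i^2$ and the number of distinct eigenvalues; combining these forces the multiplicity pattern tightly enough to read off $\max_i a_i$ in each of the few exceptional $(a, r, |x|)$ triples, thereby separating the generic bound $(1/2) r^a$ from the lone exception at $a = 1$, $r = 3$.
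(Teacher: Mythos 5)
Your proposal follows essentially the same route as the paper: bound the orbit count $d_1$ from (\ref{orbits1}), use $\sum_i a_i^2 \le d_1$ together with $\max_i a_i \le \sqrt{d_1}$ to conclude $d(x) \le (1/2)r^a$ whenever $d_1 \le (1/4)r^{2a}$, and check the $a=1$, $r \in \{2,3,5,7\}$ exceptions by hand, isolating the lone case $(a,r,|x|)=(1,3,3)$. One small point (shared with the paper's own text): the threshold $d_1 \le (1/4)r^{2a}$ actually fails at $(a,r)=(2,2)$, where $d_1 = 2+1+3 = 6 > 4$, but the conclusion $d(x)\le 2$ survives since $\max_i a_i$ is an integer bounded by $\sqrt{6}$.
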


In the second case, $d_{2} \leq (1/4) \cdot r^{2a}$ unless $x$ has
order $4$ and $a=3$ or $a=4$, or $x$ has order $2$ and $a=2$, or
$x$ has order $3$ and $a=2$ or $a=3$. In all cases we will have
$d(x) \leq ((r+1)/2r) \cdot r^{a}$.

\begin{lem}
\label{twoblock} Let us use the notations and assumptions of the
first paragraph of this section. Let $x$ be an $r$-element in $G
\setminus R$, and let $R_1$, $R_2$ be two maximal abelian
subgroups of $R$ whose intersection is $Z(R)$. Suppose that the
Jordan canonical form of $x$ on $R/Z(R)$ consists of two
$a$-by-$a$ Jordan blocks that are the same where one leaves
$R_{1}/Z(R)$ invariant and the other leaves $R_{2}/Z(R)$
invariant. Then $d(x)/\dim_{F}(V) \leq ((r+1)/2r)$. Moreover we
have $d(x) \leq (1/2) \cdot r^{a}$ unless $a=2$, $r=2$ and $x$ has
order $2$ in which case $d(x) \leq (3/4) \cdot r^{a}$, or $a=2$,
$r=3$ and $x$ has order $3$ in which case $d(x) \leq (5/9) \cdot
r^{a}$.
\end{lem}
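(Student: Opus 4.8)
The plan is to mimic the proof of Lemma~\ref{oneblock}, replacing the orbit count \eqref{orbits1} by \eqref{orbits2}. Since $R$ is a nontrivial normal subgroup and $O_{p}(G)=1$ forces $r\neq p$, the $r$-element $x$ is semisimple and hence diagonalizable on $V$; let $a_{1},\dots,a_{m}$ be the multiplicities of its distinct eigenvalues, so that $\sum_{i=1}^{m}a_{i}=r^{a}$, $d(x)=\max_{i}a_{i}$, and $m$ is at most the order $r^{\ell}$ of $x$. The two facts I would exploit are the exact identity $\dim(C_{E}(x))=\sum_{i=1}^{m}a_{i}^{2}$ (from diagonalizability) and the inequality $\dim(C_{E}(x))\le d_{2}$, which follows from the orbit-closing claim proved just before Lemma~\ref{oneblock}: $\dim(C_{E}(x))$ counts only those $\langle x\rangle$-orbits on $R/Z(R)$ that close up with trivial central twist, and these form a subset of all $d_{2}$ orbits. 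Combining the two gives $d(x)^{2}\le\sum_{i}a_{i}^{2}=\dim(C_{E}(x))\le d_{2}$, so $d(x)\le\sqrt{d_{2}}$.

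First I would settle the generic case by proving $d_{2}\le\tfrac14 r^{2a}$, which at once yields $d(x)\le\tfrac12 r^{a}$. Writing $b_{i}=\min\{r^{i},a\}$, one has $b_{i}=r^{i}$ for $i\le\ell-1$ and $b_{\ell}=a$; since $a>r^{\ell-1}$, the exponent $2a-\ell$ of the $i=\ell$ summand exceeds that of every earlier summand by at least $1$, so the sum is dominated by its top term up to a geometric factor, giving $d_{2}\le r^{2}+\tfrac{r}{r-1}\,r^{\,2a-\ell}$. The resulting requirement $\tfrac{r}{r-1}r^{-\ell}\le\tfrac14$ holds whenever the order $r^{\ell}$ of $x$ is not too small; tracking the constraint $r^{\ell-1}<a\le r^{\ell}$ shows the only possible violations occur for $r=2$ with order $2$ ($a=2$) or order $4$ ($a=3,4$), and for $r=3$ with order $3$ ($a=2,3$). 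For $r\ge5$ one must retain the subtracted term $r^{2r^{\ell-1}}$ in \eqref{orbits2} to win the borderline $\ell=1$ estimate, and the remaining borderline orders (order $8$ for $r=2$, order $9$ for $r=3$, orders $5,7$ for $r\ge5$) I would dispose of by direct substitution. This reproduces exactly the exceptional list announced before the lemma.

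For the finitely many exceptional configurations the crude inequality $d(x)\le\sqrt{d_{2}}$ overshoots, so I would sharpen it using \emph{both} moment constraints together with $m\le r^{\ell}$. Regarding $a_{1},\dots,a_{m}$ as $m$ reals with $\sum a_{i}=r^{a}$ and $\sum a_{i}^{2}\le d_{2}$, fixing the largest multiplicity and applying Cauchy--Schwarz to the remaining $m-1$ yields
\[
d(x)=\max_{i}a_{i}\le\frac{r^{a}+\sqrt{(m-1)\bigl(m\,d_{2}-r^{2a}\bigr)}}{m}.
\]
Substituting the exact value of $d_{2}$ from \eqref{orbits2} with $m$ equal to the order of $x$ gives $d(x)\le\tfrac12 r^{a}$ for $(r,a)=(2,3),(2,4),(3,3)$, while the genuine exceptions produce $d(x)\le\tfrac34 r^{a}$ for $(r,a,\text{order})=(2,2,2)$ and $d(x)\le\tfrac59 r^{a}$ for $(3,2,3)$; in every case $d(x)\le\tfrac{r+1}{2r}r^{a}$, the bound being attained at $(r,a)=(2,2)$.

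The main obstacle is this last step. The bare sum-of-squares bound is not tight enough in the exceptional range—for instance at $(r,a)=(3,3)$ it only gives $d(x)\le15$, whereas $\tfrac12 r^{a}=13.5$ is required—so the argument must feed the number of distinct eigenvalues into a variance-type estimate rather than relying on $\sum a_{i}^{2}$ alone. Arranging for the constant $\tfrac{r+1}{2r}$ to come out exactly sharp in the one case where it is attained, while simultaneously beating it in the others, is precisely what forces the two-moment refinement above, and verifying the delicate borderline orders in the second step is the other point demanding care.
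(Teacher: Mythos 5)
Your proposal is correct, and its first half coincides with the paper's argument: both diagonalize the $r$-element $x$ (legitimate since $O_p(G)=1$ forces $p\neq r$), both use the two constraints $\sum_i a_i=r^a$ and $\sum_i a_i^2=\dim C_E(x)\le d_2$ with $d_2$ as in \eqref{orbits2}, and both reduce to the same five exceptional triples $(r,a,|x|)$ via the generic estimate $d_2\le\frac14 r^{2a}$ (which the paper merely asserts and you sketch). Where you genuinely diverge is in finishing those five cases. The paper uses two different ad hoc devices: for $|x|=r$ (the cases $(2,2,2)$ and $(3,2,3)$) it exploits the exact equality $\sum_i a_i^2=d_2=r^{2a-1}+r^2-r$ and solves the two Diophantine constraints directly, while for $(3,3,3)$, $(2,3,4)$, $(2,4,4)$ it abandons the moment method entirely and instead counts the cycles of $x$ on the $r^a$ one-dimensional eigenspaces of $R_1$ on $V$ (the mechanism of Lemma \ref{HHtwoblocks}), bounding $d(x)$ by the number of cycles. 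Your uniform replacement --- feeding the third constraint $m\le|x|$ into a Cauchy--Schwarz estimate on the $m-1$ non-maximal multiplicities to get $d(x)\le\bigl(r^a+\sqrt{(m-1)(m\,d_2-r^{2a})}\bigr)/m$ --- is valid (the inequality only weakens when $m$ is replaced by an upper bound, and only $\sum a_i^2\le d_2$ is needed, not equality) and I have checked it reproduces exactly the paper's numbers $3,5,11,4,6$ in the five cases; note that for $(2,3,4)$ you get $4.12\ldots$ and must invoke integrality of $d(x)$ to land on $4=\frac12 r^a$. What your approach buys is uniformity and independence from the eigenspace-permutation analysis; what the paper's buys is that the cycle count on $R_1$-eigenspaces gives the bound with no optimization. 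One shared caveat you inherit from the paper: both arguments tacitly identify the order of $x$ with the order $r^\ell$ of its action on $R/Z(R)$ (so that $m\le r^\ell$); if $x^{r^\ell}$ were a non-central element of $R$ one would instead note $d(x)\le d(x^{r^\ell})=r^{a-1}$, and if it is central it acts as a scalar so the eigenvalue count $m\le r^\ell$ survives.
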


\begin{proof}
Let the order of the non-identity element $x$ be $q$. Since $q$ is
a power of $r$, the element $x$ can be diagonalized over $V$.
Suppose there are $m$ distinct eigenvalues. Let $a_i$ be the
multiplicity of the $i$-th eigenvalue. Then $d(x) = \max_{1 \leq i
\leq m} \{ a_{i} \}$. By the above, we have $r^{a} = \sum_{i} a_i$
and $d_{2} \geq \sum_{i} {a}^{2}_{i}$ where $d_2$ is as in
(\ref{orbits2}).

If $q=r$, then we can say even more. Indeed, in this case it is
easy to see that $r^{2a-1} + r^{2} - r = d_{2} =
\dim_{F}(C_{E}(x)) = \sum_{i} {a}^{2}_{i}$.

By the remark made just before the statement of the lemma, it is
sufficient to consider the following five cases.

Let $q=2$ and $a=2$. Then we have the equations $\sum_{i=1}^{m}
a_{i} = 4$ and $\sum_{i=1}^{m} {a}^{2}_{i} = 10$. Hence $d(x) =
3$.

Let $q=3$ and $a=2$. Then we have the equations $\sum_{i=1}^{m}
a_{i} = 9$ and $\sum_{i=1}^{m} {a}^{2}_{i} = 33$. We see that
$d(x) \leq 5$.

Let $q=3$ and $a=3$. Then $x$ acts on the set of distinct
eigenspaces of $R_1$ on $V$ having $8$ cycles of length $3$ and
$3$ fixed points. Hence $d(x) \leq 11$.

Let $q=4$ and $a=3$. Then $x$ acts on the set of distinct
eigenspaces of $R_1$ on $V$ having $1$ cycle of length $4$, $1$
cycle of length $2$, and $2$ fixed points. Hence $d(x) \leq 4$.

Let $q=4$ and $a=4$. Then $x$ acts on the set of distinct
eigenspaces of $R_1$ on $V$ having $3$ cycles of length $4$, $1$
cycle of length $2$, and $2$ fixed points. Hence $d(x) \leq 6$.
\end{proof}

Now let $x$ be {\it any} element of $G \setminus R$ of prime
order. We will use the previous lemmas of this section to show
that $d(x) \leq ((r+1)/2r) \cdot r^{a}$. For this purpose and for
the rest of this section we will use yet another lemma.

\begin{lem}[Guralnick, Malle, \cite{GM}]
\label{GM} Let $V_{1}$ and $V_{2}$ be an $F\langle x_{1} \rangle$-
and an $F\langle x_{2} \rangle$-module respectively for any field
$F$ and for group elements $x_{1}$ and $x_{2}$. Then $V_{1}
\otimes V_{2}$ can naturally be viewed as an $F\langle
(x_{1},x_{2}) \rangle$-module. Moreover, if $d(x_{1}) \leq c \cdot
\dim_{F}(V_{1})$ for some constant $c$, then $d((x_{1},x_{2}))
\leq c \cdot \dim_{F}(V_{1} \otimes V_{2})$.
\end{lem}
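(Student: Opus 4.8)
The plan is to prove the sharper, factor-by-factor inequality $d((x_1,x_2)) \le d(x_1)\cdot \dim_F(V_2)$, from which the lemma is immediate: substituting the hypothesis $d(x_1)\le c\cdot\dim_F(V_1)$ and using $\dim_F(V_1\otimes V_2)=\dim_F(V_1)\cdot\dim_F(V_2)$ gives $d((x_1,x_2))\le c\cdot\dim_F(V_1\otimes V_2)$. I work under the standing assumption of this section that $F$ is algebraically closed, so that $d(x)$ records the largest multiplicity of an eigenvalue of $x$ and every operator is triangularizable (over a general field the maximum must be taken over $\overline{F}$ for the statement to hold). I also use that $x_2$, being an element of the group $\langle x_2\rangle$, acts invertibly on $V_2$, so all its eigenvalues are nonzero.

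The key elementary tool is the subadditivity of eigenspace dimensions along an invariant filtration. For a linear operator $T$ on a finite-dimensional space $B$, a scalar $\lambda\in F$, and a $T$-invariant subspace $A\subseteq B$, write $E_\lambda(-)$ for the $\lambda$-eigenspace. Since $T$ commutes with the quotient map $q\colon B\to B/A$, for $v\in E_\lambda(B)$ one has $T q(v)=q(Tv)=\lambda q(v)$, so $q$ carries $E_\lambda(B)$ into $E_\lambda(B/A)$, with kernel $A\cap E_\lambda(B)=E_\lambda(A)$. Hence $\dim E_\lambda(B)\le \dim E_\lambda(A)+\dim E_\lambda(B/A)$, and iterating along a complete $T$-invariant flag bounds $\dim E_\lambda$ of the whole space by the sum of the $\dim E_\lambda$ of the successive quotients.

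First I would triangularize $x_2$: choose a complete $x_2$-invariant flag $0=W_0\subset W_1\subset\cdots\subset W_n=V_2$, with $n=\dim_F(V_2)$, on whose successive quotients $x_2$ acts by nonzero scalars $\nu_1,\dots,\nu_n$. Tensoring with $V_1$ produces the flag $0\subset V_1\otimes W_1\subset\cdots\subset V_1\otimes W_n=V_1\otimes V_2$, each term invariant under $T=x_1\otimes x_2$, whose $i$-th successive quotient is $V_1\otimes(W_i/W_{i-1})\cong V_1$ with $T$ acting as $\nu_i x_1$. Because $\nu_i\neq 0$, the $\lambda$-eigenspace of $\nu_i x_1$ is $\ker(x_1-\lambda\nu_i^{-1})=E_{\lambda\nu_i^{-1}}(x_1)$, of dimension at most $d(x_1)$.

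Combining these, for every $\lambda\in F$ the subadditivity step gives $\dim E_\lambda(x_1\otimes x_2)\le \sum_{i=1}^{n}\dim E_{\lambda\nu_i^{-1}}(x_1)\le n\cdot d(x_1)=\dim_F(V_2)\cdot d(x_1)$, and taking the maximum over $\lambda$ yields the desired bound. I expect the only genuinely delicate points to be the bookkeeping in the subadditivity step — namely checking that $q$ really maps $E_\lambda$ into $E_\lambda$ with kernel exactly $E_\lambda(A)$ — and the verification that $T$ acts as the scalar multiple $\nu_i x_1$ on each quotient; everything else is routine. A self-contained alternative to the flag argument is to decompose $V_2$ into Jordan blocks of $x_2$ and, on $V_1\otimes(\text{block of size } m)$, solve the system $x_1(v_k+v_{k+1})=\lambda\nu^{-1}v_k$ by projecting onto the top coordinate and inducting on $m$; this produces the same per-block bound $m\cdot d(x_1)$.
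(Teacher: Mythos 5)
Your proof is correct. Note, however, that the paper itself offers no proof of this lemma: it is stated with an attribution to Guralnick--Malle \cite{GM} and used as a black box, so there is nothing internal to compare against. Your filtration argument establishes the sharper multiplicative bound $d(x_1\otimes x_2)\le d(x_1)\cdot\dim_F(V_2)$, and each step checks out: the subadditivity of $\dim E_\lambda$ along a $T$-invariant subspace follows from rank--nullity applied to the quotient map restricted to $E_\lambda(B)$ (whose kernel is indeed exactly $E_\lambda(A)$ because $A$ is $T$-invariant), the tensored flag $V_1\otimes W_i$ is $(x_1\otimes x_2)$-invariant with successive quotients on which the action is the scalar multiple $\nu_i x_1$, and the invertibility of $x_2$ guarantees $\nu_i\ne 0$ so that each quotient contributes at most $d(x_1)$ to any single eigenvalue $\lambda$ of the tensor product. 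Your caveat about the ground field is also well taken: as literally stated ("any field $F$", with $d$ the maximum over $\lambda\in F$) the inequality can fail --- e.g.\ two planar rotations of order $4$ over $\mathbb{R}$ give $d(x_1)=0$ but $d(x_1\otimes x_2)\ge 1$ --- so one must either assume $F$ algebraically closed (as it is throughout Section 2 of the paper, where the lemma is applied) or read $d$ as computed over $\overline{F}$, which is how the source \cite{GM} intends it. In short, you have supplied a correct, self-contained, elementary proof of a statement the paper only cites.
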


Put $c$ to be $(r+1)/2r$. By Lemmas \ref{oneblock} and 2.2, it is
easy to see that if $x$ acts indecomposably on $R/Z(R)$, then
$d(x) \leq c \cdot r^{a}$. So we may (and do) assume that $x$ does
not act indecomposably but decomposably on $R/Z(R)$. We claim that
$d(x) \leq c \cdot r^{a}$. We will argue by induction on the
number of indecomposable summands appearing in a direct sum
decomposition of the $\langle x \rangle$-module $R/Z(R)$.

First assume that the order of $x$ is coprime to $r$. Let $R_{1}$
be a minimal $\langle x \rangle$-invariant subspace in $R$. If
$R_{1}$ is non-degenerate, then, by 19.2 of \cite{A}, so is
${R_{1}}^{\bot}$ and $R = R_{1} \circ {R_{1}}^{\bot}$. By the
induction hypothesis, we conclude that $d(x) \leq c \cdot r^{a}$.
So we may (and do) suppose that $R_{1}$ is degenerate. By the
minimality of $R_{1}$, we have $R_{1} \subseteq R_{1}^{\bot}$
(since $R_{1} \cap R_{1}^{\bot}$ is a submodule of $R_{1}$). Since
$R$ is completely reducible, there exists an $\langle x
\rangle$-submodule $R_{2}$ of $R$ so that $R = R_{1}^{\bot}
R_{2}$. If $R_{2}$ is non-degenerate, then we can cook up the
decomposition $R = R_{2} \circ R_{2}^{\bot}$ and use the induction
hypothesis as before. So we may (and do) assume that both $R_{1}$
and $R_{2}$ are degenerate. Now $R_{2} \cap R_{2}^{\bot}$ is an
$\langle x \rangle$-submodule in the completely reducible module
$R_{2}$ so $R_{2} \subseteq R_{2}^{\bot}$ or there exists a
non-degenerate submodule $R_{3}$ such that $R_{2} = (R_{2} \cap
R_{2}^{\bot}) R_{3}$. In the latter case we may write $R = R_{3}
\circ R_{3}^{\bot}$ and apply the induction hypothesis to get the
desired conclusion. From now on we assume that both $R_{1}$ and
$R_{2}$ are degenerate and $R_{1} \subseteq R_{1}^{\bot}$, $R_{2}
\subseteq R_{2}^{\bot}$. Put $\widetilde{R} = R_{1} R_{2}$. We
claim that $\widetilde{R}$ is non-degenerate. We must show that
$\mathrm{Rad}(\widetilde{R}) \subseteq Z(R)$. Clearly,
$\mathrm{Rad}(\widetilde{R}) \subseteq R_{1}^{\bot}$. Since $R =
R_{1}^{\bot} R_{2}$, we have $R_{2} \cap R_{1}^{\bot} \subseteq
Z(R)$. The previous two statements imply $R_{2} \cap
\mathrm{Rad}(\widetilde{R}) \subseteq Z(R)$. From this it is not
difficult to conclude that $\mathrm{Rad}(\widetilde{R}) \subseteq
R_{1} \circ Z(R)$. This means that whenever $x \in
\mathrm{Rad}(\widetilde{R})$, then $[x,y] = 1$ for all $y \in
R_{1}^{\bot} \cup R_{2}$. Since $R = R_{1}^{\bot} R_{2}$, we
conclude that $Z(R) \subseteq \mathrm{Rad}(\widetilde{R})
\subseteq \mathrm{Rad}(R) \subseteq Z(R)$ which is exactly what we
wanted; $\widetilde{R}$ is indeed non-degenerate. If
$\widetilde{R} \not= R$, then $R = \widetilde{R} \circ
\widetilde{R}^{\bot}$ where both $\widetilde{R}$ and
$\widetilde{R}^{\bot}$ are non-degenerate and we may use the
induction hypothesis to get what we want. So we may assume that
$\widetilde{R} = R$.

Now assume that the order of $x$ is $r$.

For $r$ odd Hesselink \cite{Hesselink} showed that in the Jordan
normal form of $x$ on $R/Z(R)$ each `indecomposable part' consists
of a Jordan block of even size or of two Jordan blocks (of the
same) odd size. (Note that \cite[Remark, Page 172]{Hesselink}
points out that the field of order $r$ need not be quadratically
closed.) In the first case, the Jordan block of even size acts on
a non-degenerate space, while in the second case, the two Jordan
blocks act on totally singular subspaces. By our induction
hypothesis and \cite[19.2]{A}, we may assume that every
`indecomposable part' consists of two Jordan blocks of odd size.

If $r=2$ and $x$ is an involution then it is still true that in
the Jordan normal form of $x$ on $R/Z(R)$ each `indecomposable
part' consists of a single Jordan block (acting on a
non-degenerate space) or of two Jordan blocks (acting on totally
singular subspaces). This is because any $2$-dimensional subspace
of $R/Z(R)$ is either totally singular or non-degenerate with
respect to an alternating form. Again by our induction hypothesis
and \cite[19.2]{A}, we may assume that every `indecomposable part'
of $x$ consists of two Jordan blocks.


Write $R = (R_{1} R_{2}) \circ \ldots \circ (R_{\ell} R_{\ell+1})$
for some odd integer $\ell$, where, for all odd $1 \leq i \leq
\ell$, the cyclic group $\langle x \rangle$ acts indecomposably on
each of the two totally singular $\langle x \rangle$-modules
$R_{i}$ and $R_{i+1}$ with $R_{i} R_{i+1}$ acting absolutely
irreducibly on a vector space $V_{i}$ where $V = V_{1} \otimes
V_{3} \otimes \ldots \otimes V_{\ell}$. By Lemma \ref{GM}, it is
sufficient to show that for each odd $i$ the invariant $d(x)$ is
at most $c \cdot r^{a_{i}}$ where $r^{a_{i}} = \dim(V_{i})$. But
this follows from Lemmas \ref{HHtwoblocks} and \ref{twoblock}.

Let us summarize the results obtained so far in this section (with
bounds on $\dim_{F}(C_{V}(x))$ rather than on $d(x)$).

\begin{thm}
\label{dim} Let $V$ be a faithful irreducible $FG$-module where
$F$ is an algebraically closed field of characteristic $p > 0$ and
$G$ is a finite group. Suppose that $G$ has a normal subgroup $R$
of symplectic type with $|R/Z(R)| = r^{2a}$ for some prime $r$ and
that $R$ acts absolutely irreducibly on $V$, $\dim_{F}(V) = r^{a}$
and $O_{p}(G) = 1$. Suppose that $R$ is the unique normal subgroup
of $G$ that is minimal with respect to being non-central. Let $x$
be an arbitrary non-identity element in $G$. Then
$\dim_{F}(C_{V}(x)) \leq ((r+1)/2r) \cdot r^{a}$.
\end{thm}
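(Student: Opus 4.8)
The plan is to reduce the statement to the lemmas already established in this section and then dispose of the remaining cases by hand. The target bound $\dim_F(C_V(x)) \leq ((r+1)/2r)\cdot r^a$ is implied by $d(x) \leq ((r+1)/2r)\cdot r^a$, since the fixed space $C_V(x)$ is the eigenspace for eigenvalue $1$ and hence is dominated by $d(x) = \max_\lambda \dim(\mathrm{Eigen}(\lambda,x))$. So the entire argument can be carried out at the level of $d(x)$. The first reduction is to a nonidentity element $x$ of prime order: if $y$ is any nonidentity element of $G$, some power of $y$ has prime order $s$, and because the bound is stated for the maximal eigenspace dimension, controlling $d$ on elements of prime order suffices once we observe that $C_V(y) \subseteq C_V(y^{|y|/s})$ and the latter lies in a single eigenspace of a prime-order element. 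Thus I would state at the outset that it is enough to bound $d(x)$ for $x$ of prime order in $G\setminus R$, the case $x\in R$ having been settled in the text ($d(x)=r^a$ if central, $r^{a-1}$ otherwise, both within the bound).

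Next I would organize the prime-order case by the action of $\langle x\rangle$ on $R/Z(R)$, carried along the $\perp$-decomposition machinery developed above. The key structural input is that, after passing to a non-degenerate orthogonal decomposition $R = \widetilde R_1 \circ \cdots \circ \widetilde R_t$ compatible with $\langle x\rangle$, each factor gives a tensor factor of $V$ via absolute irreducibility, and Lemma \ref{GM} lets me multiply the per-factor estimates: if each factor yields $d(x) \leq c\cdot\dim(V_i)$ with $c = (r+1)/2r$, then the same constant $c$ governs the tensor product $V = V_1\otimes\cdots\otimes V_t$. So the whole theorem collapses to proving the per-indecomposable-factor bound $d(x)\leq c\cdot r^{a_i}$. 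For an indecomposable factor there are exactly the cases already enumerated: when $x$ has order coprime to $r$ and acts irreducibly, Lemma \ref{HH} gives $d(x)\leq (r^a+1)/m \leq c\cdot r^a$; when $x$ is an $r$-element with a single Jordan block of size $2a$, Lemma \ref{oneblock} gives the bound; and when $x$ is an $r$-element with two equal $a$-by-$a$ Jordan blocks on totally singular subspaces, Lemma \ref{twoblock} applies. The Hesselink and Aschbacher (19.2) results quoted above guarantee that these are the only indecomposable shapes that arise.

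The main obstacle is verifying that the constant $c=(r+1)/2r$ survives all the exceptional small cases flagged in Lemmas \ref{oneblock} and \ref{twoblock}, where the sharper conclusion $d(x)\leq (1/2)r^a$ fails. For instance, Lemma \ref{oneblock} allows $d(x)\leq (2/3)r^a$ when $a=1$, $r=3$, and $|x|=3$, and Lemma \ref{twoblock} allows $d(x)\leq (3/4)r^a$ when $a=2$, $r=2$, $|x|=2$, and $d(x)\leq (5/9)r^a$ when $a=2$, $r=3$, $|x|=3$. I would check directly that in each of these exceptions the fraction still does not exceed $(r+1)/2r$: for $r=3$ one has $(r+1)/2r = 2/3$, which matches or beats $2/3$ and $5/9$; for $r=2$ one has $(r+1)/2r = 3/4$, which matches $3/4$. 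Thus the per-factor constant is uniformly $c$ even in the bad cases, and no exceptional configuration breaks the tensor-product estimate. The remaining care is bookkeeping: ensuring that the degenerate-versus-non-degenerate reduction from the main text genuinely reduces an arbitrary prime-order $x$ to these indecomposable types without loss, and that mixed situations (some factors coprime to $r$, some $r$-elements) are handled by applying Lemma \ref{GM} factor by factor. Once these pieces are assembled, the bound $\dim_F(C_V(x))\leq ((r+1)/2r)\cdot r^a$ follows for every nonidentity $x\in G$.
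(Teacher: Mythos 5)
Your proposal follows the paper's route: the paper's own proof of Theorem \ref{dim} is only a three-line summary that (i) quotes the bound for non-identity elements of $R$, (ii) quotes the bound for prime-order elements of $G\setminus R$ obtained from the orthogonal-decomposition argument together with Lemmas \ref{HH}, \ref{HHtwoblocks}, \ref{oneblock}, \ref{twoblock} and the tensor-product Lemma \ref{GM}, and (iii) reduces an arbitrary element to a prime-order power via $C_{V}(x)\subseteq C_{V}(x^{m/q})$. Your sketch reconstructs exactly this, and your verification that the exceptional constants $2/3$, $3/4$, $5/9$ from Lemmas \ref{oneblock} and \ref{twoblock} still fit under $(r+1)/2r$ is correct. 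Two points, however, need repair.

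First, your opening claim that ``the entire argument can be carried out at the level of $d(x)$'' breaks for central elements, and your parenthetical ``$d(x)=r^{a}$ if central \dots both within the bound'' is false, since $r^{a}>((r+1)/2r)\cdot r^{a}$. A non-identity central element (which can also arise as the prime-order power $x^{m/q}$ of a non-central $x$) acts as a non-trivial scalar on $V$, so one must use $C_{V}(x)=0$ directly rather than any bound on $d(x)$; the central case has to be split off from the $d(x)$ bookkeeping, not absorbed into it. Second, your enumeration of indecomposable shapes omits one case: when the order of $x$ is coprime to $r$, the $\perp$-reduction can terminate with $R=R_{1}R_{2}$ where $R_{1}$ and $R_{2}$ are dual totally singular maximal abelian halves on each of which $\langle x\rangle$ is irreducible but on whose product it is not, so Lemma \ref{HH} does not apply. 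This is precisely the situation of Lemma \ref{HHtwoblocks} (which you never cite), giving $d(x)\le 1+(r^{a}-1)/m\le((r+1)/2r)r^{a}$. With that case added and central elements handled separately, your sketch reproduces the paper's argument.
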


\begin{proof}
If $1 \not= x \in R$, then $\dim_{F}(C_{V}(x)) \leq (1/2) \cdot
r^{a}$. If $x \in G \setminus R$ and $x$ has prime order, then
$\dim_{F}(C_{V}(x)) \leq ((r+1)/2r) \cdot r^{a}$. (These were
shown earlier.)

Finally, if $1 \not= x \in G$ is arbitrary and $q$ is a prime
proper divisor of the order $m$ of $x$, then $\dim_{F}(C_{V}(x))
\leq \dim_{F}(C_{V}({x}^{m/q})) \leq ((r+1)/2r) \cdot r^{a}$.
\end{proof}

However we will also need a more detailed result than Theorem
\ref{dim}. We start with a lemma.

\begin{lem}
\label{special} Let $E$ be a group of symplectic type with
$|E/Z(E)|=2^{2a}$. Let $V$ be an absolutely irreducible $E$-module
of dimension $2^{a}$. If $1 \ne x$ is a $2$-element in the
normalizer of $E$ in $GL(V)$ outside $E$ then one of the following
holds.
\begin{enumerate}
\item $\dim C_V(x) \le (1/2) \dim V$.

\item $x$ is an involution and in its action on $E/Z(E)$ each of
the $2m$ Jordan blocks of size $2$ act on totally singular
subspaces. (All other Jordan blocks have size $1$.) In this case
$\dim C_V(x) = (1/2)(1 + 2^{-m}) \dim V$.
\end{enumerate}
\end{lem}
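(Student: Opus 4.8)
The plan is to combine the tensor decomposition underlying Theorem~\ref{dim} with a direct analysis of the $1$-eigenspace (rather than of the largest eigenspace $d(x)$). Since $E$ is a $2$-group acting faithfully and absolutely irreducibly on $V$, the characteristic is odd, so $x$ is diagonalisable and all its eigenvalues are $2$-power roots of unity. First I would decompose $W := E/Z(E)$ orthogonally into $\langle x\rangle$-indecomposable pieces $W=W_1\perp\cdots\perp W_k$; by the characteristic-$2$ symplectic analogue of the structure used before Theorem~\ref{dim} (each indecomposable piece is a single Jordan block on a non-degenerate space or a hyperbolic pair of equal Jordan blocks on totally singular subspaces, cf.\ \cite{A}, \cite{Hesselink}), this is exactly the setting to which Lemmas~\ref{oneblock} and~\ref{twoblock} apply. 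Writing the corresponding central product $E=E_1\circ\cdots\circ E_k$ gives $V=V_1\otimes\cdots\otimes V_k$ with $x$ acting, up to an overall scalar, as $x_1\otimes\cdots\otimes x_k$. The crucial bookkeeping fact is that for $r=2$ the per-factor ratio $d(x_i)/\dim V_i$ exceeds $1/2$ in only two ways: the genuinely scalar factors (ratio $1$), and the exceptional factors of Lemma~\ref{twoblock}, namely $\dim_F W_i=4$ with $x_i$ an involution acting as two size-$2$ blocks on totally singular planes (ratio $3/4$, with eigenvalue multiplicities $3$ and $1$, hence $\operatorname{tr}(x_i)=\pm2$).

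Next I would run a conditioning argument on the eigenvalue $1$. Normalising the eigenvalue multiplicities of each $x_i$ to a distribution $P_i$ on roots of unity, the quantity $\dim C_V(x)/\dim V$ is the value at the target root of unity of the multiplicative convolution of the $P_i$. I first peel off the factors on which $x$ acts as a genuine scalar: they only replace the target $1$ by a fixed $2$-power root of unity $\mu$ and rescale all multiplicities uniformly, so it suffices to bound, for the tensor product $x'$ of the non-scalar factors, the multiplicity of an arbitrary $\mu$. (A fixed subspace on which $x$ is not scalar contributes a non-central element of a symplectic-type group, hence a factor of ratio exactly $1/2$, so it is harmless.) Conditioning on all but one non-scalar factor shows $\dim C_V(x)/\dim V\le \min_i d(x_i)/\dim V_i$, the minimum over the non-scalar factors; thus if even one non-scalar factor has ratio $\le 1/2$ we obtain $\dim C_V(x)\le\tfrac12\dim V$, which is case~(1).

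It remains to treat the case where every non-scalar factor is one of the exceptional $\dim_F W_i=4$ involutions of Lemma~\ref{twoblock}. Then $x'$ is itself an involution, so its only eigenvalues are $\pm1$; if the scalar part contributes $\mu\notin\{\pm1\}$ (in particular whenever $x$ has order $\ge 4$ through a scalar of order $\ge4$) then the $\mu$-eigenspace is zero and we are again in case~(1). Otherwise $x=\pm x'$ is a genuine involution with $2m$ Jordan blocks of size $2$ on totally singular subspaces and all other blocks of size $1$, and $\dim C_V(x)=\tfrac12(\dim V+\operatorname{tr} x)$. Multiplying the factor traces gives $\operatorname{tr}(x)=\pm2^{\,a-m}$, whence $\dim C_V(x)=\tfrac12(1\pm2^{-m})\dim V$: the sign $+$ is precisely case~(2) and the sign $-$ gives case~(1). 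Note that the trace sign, genuinely subtle to pin down a priori, never has to be computed, since the two signs are split between the two conclusions.

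The main obstacle I anticipate is securing the orthogonal indecomposable decomposition of $W$ for an \emph{arbitrary} $2$-element $x$: the text establishes the single-block/totally-singular-pair dichotomy only for involutions when $r=2$, and the general case requires the classification of unipotent elements of $\operatorname{Sp}(2a,2)$—equivalently the fact that a single Jordan block carries an invariant non-degenerate alternating form only in even size, forcing odd-size blocks to pair—so that Lemmas~\ref{oneblock} and~\ref{twoblock} cover every indecomposable summand. The secondary nuisance is that the factorisation $V=\bigotimes V_i$, $x=\bigotimes x_i$ is only projective, so the bookkeeping of scalars, and of fixed-space factors on which $x$ acts through a non-central element of $E$, must be carried out carefully; both issues are bounded in difficulty rather than deep.
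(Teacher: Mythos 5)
Your argument is correct in outline but takes a genuinely different route from the paper's. The paper first reduces to $x$ of order $2$ or $4$ with $\langle x\rangle\cap E=1$ and then works \emph{locally} inside $E$: for an involution leaving a non-degenerate $2$-space invariant it exhibits a non-abelian subgroup $F$ of order $8$ normalized by $x$ and shows $x$ has trace $0$ on each $2$-dimensional $\langle F,x\rangle$-constituent; in the totally singular case it computes the trace factor by factor, much as you do at the end; and for order $4$ it chooses a $3$-dimensional subspace $F/Z(E)$ on which $x$ is a single Jordan block and argues via eigenvalue orbits or the induced module $U_F^J$. You instead globalize from the start: orthogonal decomposition of $E/Z(E)$ into $\langle x\rangle$-indecomposables, tensor factorization of $V$, Lemma \ref{GM}, and the per-factor bounds of Lemmas \ref{oneblock} and \ref{twoblock}. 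What this buys is uniformity (no separate order-$4$ case) and a clean identification of the unique bad local configuration; what it costs is precisely the input you flag: the orthogonal indecomposable decomposition for an \emph{arbitrary} unipotent element of $Sp(2a,2)$ (sums of single even blocks on non-degenerate spaces and totally singular pairs), which the paper establishes only for involutions when $r=2$ and would have to be imported from Hesselink or the Liebeck--Seitz classification. The paper's local arguments are designed exactly to avoid needing this.

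One step needs repair, though your own scalar bookkeeping already contains the cure. You classify the factors of ratio $>1/2$ as ``scalars'' and ``exceptional involutions of Lemma \ref{twoblock},'' implicitly reading the Moreover clause of that lemma as saying that an order-$4$ factor with $W(2)$-image has ratio $\le 1/2$. That is false in the projective setting: if $y_i$ is an involution of trace $2$ on the $4$-dimensional $V_i$, then $x_i=iy_i$ has order $4$, the same image in $Sp(4,2)$, and eigenvalue multiplicities $(3,1)$, so $d(x_i)/\dim V_i=3/4$. If you silently placed such a factor in the ``ratio $\le 1/2$'' bucket, Lemma \ref{GM} would only yield $\dim C_V(x)\le(3/4)\dim V$, not case (1). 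The correct statement is: for a factor whose image in $Sp(4,2)$ is a $W(2)$-involution, $x_i^2$ lies in $E_iZ(GL(V_i))$, so either $x_i^2$ is a scalar times a non-central element of $E_i$ (whence $d(x_i)\le d(x_i^2)=\tfrac12\dim V_i$ and the factor is not exceptional), or $x_i^2$ is a scalar and $x_i$ is a scalar times an honest involution --- at which point your peeling of scalars and the $\mu\notin\{\pm1\}$ dichotomy take over as written. With that amendment the argument closes. Note also that your endgame only pins the trace of the final involution down to $\pm 2^{a-m}$, so you prove the disjunction of (1) and (2) rather than the equality in (2) for every involution of that Jordan type; this is all that is used later (only the upper bound $(1/2)(1+2^{-m})$ enters the $\mathrm{rdim}$ estimates), and since $-x$ has the same Jordan type as $x$ but opposite trace, the disjunctive reading is in fact the correct one.
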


\begin{proof}
Since any element of $E \setminus Z(E)$ has trace $0$ on $V$, the
fixed point space of any non-trivial element of $E$ has dimension
at most $(1/2) \dim V$. It suffices to assume that $x$ has order
$2$ or $4$. We may also assume that $\langle x \rangle \cap E=1$.

First suppose that $x$ is an involution. If $x$ leaves a
non-degenerate $2$-space invariant in its action on $E/Z(E)$, then
$x$ normalizes a non-abelian subgroup of order $8$, say $F$. Then
$V$ restricted to $J:=\langle F, x \rangle$ is a direct sum of
$2$-dimensional submodules (because $J/Z(J)$ is elementary abelian
of order $8$ and the derived subgroup of $J$ contains a
non-trivial central element of $E$). Since $x$ is an involution
and does not act trivially (since the normal closure of $x$ in $J$
contains the derived subgroup of $J$), $x$ has trace $0$, whence
the result.

Thus we may assume that all Jordan blocks of $x$ (in its action on
$E/Z(E)$) act on totally singular subspaces. Let the number of
Jordan blocks of size $2$ be $2m$. These form $m$ pairs acting on
the symplectic type $2$-groups $E_{1}, \ldots , E_{m}$ whose
central product with another symplectic type $2$-group $E_{0}$ is
$E$. (The element $x$ acts trivially on $E_{0}$.) Then the
$\langle x \rangle$-module $V$ has the form $V_{1} \otimes \cdots
\otimes V_{m} \otimes V_{0}$ where $V_{i}$ is an irreducible
$E_{i}$-module for every $i$ with $0 \leq i \leq m$. Now $x$ has
trace $2$ on each $V_{i}$ with $1 \leq i \leq m$ and trace $\dim
V_{0}$ on $V_{0}$. Hence the trace of $x$ on $V$ is $2^{m} \dim
V_{0}$ while $\mathrm{tr}(1) = \dim V = 4^{m} \dim V_{0}$. But
then
$$\dim C_{V}(x) = (1/2)(\mathrm{tr}(x) + \mathrm{tr}(1)) = (1/2) (1 + 2^{-m}) \dim V.$$


So now assume that $x$ has order $4$. Let $F/Z(E)$ be a
$3$-dimensional subspace of $E/Z(E)$ so that $x$ acts as a single
Jordan block on $F/Z(E)$. Suppose that $F/Z(E)$ is totally
singular (i.e. $F$ is abelian). Then $F$ has exactly $8$ distinct
eigenvalues on $V$ and $x$ permutes them in orbits of sizes $1$,
$1$, $2$, $4$, whence $\dim C_V(x) \le (1/2) \dim V$.

So we may assume that $F/Z(E)$ is not totally singular. Note that
since $[F,F] \cap Z(E) \ne 1$, it follows that every irreducible
$F$-submodule has dimension at least $2$. Since $F/Z(F)$ is
elementary abelian of order $8$, it follows that every absolutely
irreducible $F$-submodule (in any characteristic) has dimension at
most $2$ (it suffices to see this in characteristic $0$, but then
we know that every irreducible representation has dimension at
most $[F:Z(F)]^{1/2} < 3$). Thus, every irreducible $F$-submodule
(after extending scalars if necessary) of $V$ has dimension $2$.

Put $J:=\langle F, x \rangle$. Let $W$ be an irreducible
$J$-submodule of $V$. So $W$ is a direct sum of irreducible
$F$-submodules. If they are not isomorphic, then $x$ is permuting
the homogeneous components and so $\dim C_W(x) \le (1/2) \dim W$.
Suppose that $W$ is homogeneous as an $F$-submodule. Let $U$ be an
$F$-irreducible submodule of $W$. Thus, $W$ embeds in $X:=U_F^J$.
If $W$ is $2$-dimensional, then as the normal closure of $x$
contains $[F,F]$, $x$ is not trivial. Note that $\dim C_W(x) \le
\dim C_X(x) = 2$. Thus, the result holds for $\dim W > 2$. This
completes the proof.
\end{proof}

Let $x$ be the identity or an arbitrary element of $G \setminus
R$. View $x$ as a linear transformation on the vector space
$R/Z(R)$ and also as an element of $Sp(2a,r)$. We say that $x$ is
of type $B(2a,k)$ if the $GL(2a,r)$ Jordan canonical form of $x$
consists of two Jordan blocks each with minimal polynomial $f^{k}$
for some irreducible polynomial $f$ such that the $GL(2a,r)$
Jordan canonical form of $x^{q}$ (where $q$ is some power of $r$)
consists of $2k$ Jordan blocks that can be paired off in such a
way that each pair of blocks is of the kind treated in Lemma
\ref{HHtwoblocks}. Similarly, we say that $x$ is of type $C(2a)$
(or $D(2a)$) if the $GL(2a,r)$ Jordan canonical form of $x$ is the
kind treated in Lemma \ref{oneblock} (or Lemma \ref{twoblock}),
respectively. We say that $x$ has a part of type $B(2b,k)$,
$C(2b)$, or $D(2b)$ if there exists an $\langle x
\rangle$-invariant non-degenerate subspace $R_1$ of $R$ such that
the restriction of $x$ to $R_{1}/Z(R_{1})$ is of type $B(2b,k)$,
$C(2b)$, or $D(2b)$. Furthermore we say that $x$ has a part of
type $I_{2b}$ if there exists an $\langle x \rangle$-invariant
non-degenerate subspace $R_{1}$ of $R$ such that the restriction
of $x$ to the vector space $R_{1}/Z(R_{1})$ of order $r^{2b}$ is
the identity.

In the next two sections the following kinds of elements $x \in
Sp(2a,r)$ will play a fundamental role. In each case, using Lemma
\ref{special}, Lemmas \ref{HH}, \ref{HHtwoblocks}, \ref{oneblock},
\ref{twoblock}, Lemma \ref{GM}, and the argument above, we will
give an estimate for $\mathrm{rdim}(x) =
\dim_{F}(C_{V}(x))/\dim_{F}(V)$.

(i) Let $r=2$, $i$ be a positive integer at most $4$, and $2i \leq
a \leq 8$. The element $x$ has $i$ parts of type $D(4)$ and a part
of type $I_{2(a-2i)}$. $\mathrm{rdim}(x) = (1/2)(1+2^{-i})$.

If $r=2$, $a \leq 8$, and $x \not= 1$ is not of case (i) then
$\mathrm{rdim}(x) \leq 1/2$.

(ii) Let $r=3$ and $a \leq 4$. Let $i$ and $j$ be non-negative
integers with $1 \leq i+j \leq 4$ and $j \leq 1$. The element $x$
has $i$ parts of order $2$ of type $B(2,1)$, $j$ parts of type
$C(2)$, and a part of type $I_{2(a-i-j)}$. $\mathrm{rdim}(x) \leq
2/3$.

If $r=3$, $a \leq 4$, and $x \not= 1$ is not of case (ii) then
$\mathrm{rdim}(x) \leq 5/9$.

(iii) Let $r=5$ and $a \leq 2$. Let $i$ be $1$ or $2$ with $i \leq
a$. The element $x$ has $i$ parts of order $2$ of type $B(2,1)$
and a part of type $I_{2(a-i)}$. $\mathrm{rdim}(x) \leq 3/5$.

If $r=5$, $a \leq 2$, and $x \not= 1$ is not of case (iii) then
$\mathrm{rdim}(x) \leq 11/25$.

(iv) Let $r=7$ and $a = 1$. The element $x$ has a part of order
$2$ of type $B(2,1)$. $\mathrm{rdim}(x) \leq 4/7$.

If $r=7$, $a = 1$, and $x \not= 1$ is not of case (iv) then
$\mathrm{rdim}(x) \leq 3/7$.

\section{Counting certain elements}

In this section we are going to keep all the notations and
assumptions introduced in Section 2. In particular, let $R$, $F$,
$V$, and $G$ be as before.

For our future purposes we need to obtain an upper bound for the
number of non-identity elements $x \in G$ where
$\dim_{F}(C_{V}(x))$ is (relatively) large. We are only interested
in certain small cases, when $a \leq 8$ and $r=2$, $a \leq 4$ and
$r=3$, $a \leq 2$ and $r=5$, and when $a=1$ and $r=7$.

The factor group $G/R$ is isomorphic to a subgroup of the
symplectic group $Sp(2a,r)$ or to a subgroup of one of the
orthogonal groups $O^{\epsilon}(2a,2)$.

In this section let $L$ be one of the groups $Sp(2a,r)$ or
$O^{\epsilon}(2a,r)$ where $\epsilon$ is either $+$ or $-$. By
\cite{W}, \cite{FG} and \cite{ATLAS}, one can determine the number
of elements of $L$ with a given Jordan canonical form.

Much of the following is due to Wall \cite{W}, but we also follow
Fulman \cite{F}.

Let $K$ be the field with $r$ elements, and let $\phi(t) =
\alpha_{0} + \alpha_{1}t + \ldots + t^{deg(\phi)}$ be an
irreducible monic polynomial in $K[t]$ such that $\phi(0) =
\alpha_{0} \not= 0$. Define $\bar{\phi}(t)$ to be the monic
polynomial $({\alpha_{0}}^{-1})t^{deg(\phi)}\phi(t^{-1}) =
{\alpha_{0}}^{-1} + \ldots +
({\alpha_{0}}^{-1}\alpha_{1})t^{deg(\phi)-1} + t^{deg(\phi)}$.

Let $x$ be an element of $L$. Consider its Jordan canonical form.
To each power ${\phi}^{i}$ of an irreducible monic polynomial
$\phi$ one can associate a non-negative integer $m({\phi}^{i})$,
the multiplicity of a Jordan block with characteristic polynomial
${\phi}^{i}$. Similarly, to every irreducible monic polynomial
$\phi \not= t$ one can associate a partition $\lambda_{\phi}$ of
the non-negative integer $|\lambda_{\phi}|$ such that for each $i$
the number of parts equal to $i$ in $\lambda_{\phi}$ is
$m({\phi}^{i})$. For convenience, put $m_{i} = m({\phi}^{i})$. In
this way we can associate an (ordered) sequence $\Lambda_{x}$ of
$\ell(a,r)$ partitions to every element $x \in L$ where
$\ell(a,r)$ is the number of (monic) irreducible polynomials in
$K[t]$ different from $t$ whose degrees are no greater than $2a$.
(Note that $\emptyset$ is also considered to be a partition.) Fix
such a sequence of partitions $\Lambda$. In what follows we will
count $\mathcal{N}(\Lambda)$, the number of elements of $L$ whose
associated sequence of partitions is $\Lambda$.

Our first observation (probably due to Wall) is that
$\mathcal{N}(\Lambda) = 0$ unless $\Lambda$ is such that for all
irreducible polynomials $\phi$ different from $t$ we have
$\lambda_{\phi} = \lambda_{\bar{\phi}}$ and $\sum_{\phi \not= t}
|\lambda_{\phi}| deg(\phi) = 2a$.

The Jordan canonical form of $x \in L$ alone does not determine
the conjugacy class of $x$ in $L$.

In this paragraph let $r \not=2$. Wall \cite{W} showed that a
conjugacy class of $Sp(2a,r)$ corresponds to the following data.
To each monic, non-constant, irreducible polynomial $\phi \not= t
\pm 1$ associate a partition $\lambda_{\phi}$ (as before), and to
$\phi = t \pm 1$ associate a symplectic signed partition
$\lambda_{\phi}^{\pm}$, by which is meant a partition of some
natural number $|\lambda_{\phi}^{\pm}|$ such that the odd parts
have even multiplicity, together with a choice of sign for the set
of parts of size $i$ for each even $i
> 0$. These data represent a conjugacy class of $Sp(2a,r)$ if and
only if (1) $|\lambda_{t}| = 0$, (2) $\lambda_{\phi} =
\lambda_{\bar{\phi}}$ and (3) $\sum_{\phi \not= t}
|\lambda_{\phi}| deg(\phi) = 2a$.

Again, let $r \not= 2$. The orthogonal groups are the subgroups of
$GL(m,r)$ preserving a non-degenerate symmetric bilinear form. For
$m = 2l +1$ odd, there are two such forms up to isomorphism, with
inner product matrices $A$ and $\delta A$, where $\delta$ is a
non-square element of $K$ and $A$ is equal to
$$\left(
\begin{matrix}
1 &  0  & 0 \\
0 & 0_{l} & I_{l} \\
0 & I_{l} & 0_{l}
\end{matrix} \right).
$$
Denote the two corresponding orthogonal groups by $O^{+}(m,r)$ and
$O^{-}(m,r)$. This distinction will be useful, even though these
groups are isomorphic. For $m = 2l$ even, there are again two
non-degenerate symmetric bilinear forms up to isomorphism with
inner product matrices
$$\left(
\begin{matrix}
0_{l}  & I_{l} \\
I_{l} & 0_{l}
\end{matrix} \right)
$$
and
$$\left(
\begin{matrix}
0_{l-1}  & I_{l-1} & 0 & 0 \\
I_{l-1}  & 0_{l-1} & 0 & 0 \\
0 &  0 & 1 & 0 \\
0 & 0 & 0 & -\delta
\end{matrix} \right)
$$
where $\delta$ is a non-square element in $K$. Denote the
corresponding orthogonal groups by $O^{+}(2l,r)$ and
$O^{-}(2l,r)$. These groups are not isomorphic.

Consider the following combinatorial data. To each monic,
non-constant, irreducible polynomial $\phi \not= t \pm 1$
associate a partition $\lambda_{\phi}$ of some non-negative
integer $|\lambda_{\phi}|$ (as above), and to $\phi = t \pm 1$
associate an orthogonal signed partition $\lambda_{\phi}^{\pm}$,
by which is meant a partition of some natural number
$|\lambda_{\phi}^{\pm}|$ such that all even parts have even
multiplicity, and all odd $i > 0$ have a choice of sign. Wall
\cite{W} proved that these data represent a conjugacy class of
some orthogonal group if $r \not= 2$ or if $r=2$ and
$\lambda_{t+1} = \emptyset$.

From now on let $\Lambda$ be the associated sequence of partitions
of an element $x$ of $L$. For an irreducible polynomial $\phi$
such that $\lambda_{\phi} \not= \emptyset$, $\phi \not= t$ and
$\phi \not= t+1$ when $r=2$, define
$$B(\phi) = r^{deg(\phi) \big( \sum_{i < j}i m_{i} m_{j} + \frac{1}{2}
\sum_{i}(i -1)m^{2}_{i} \big)} \prod_{i} A({\phi}^{i}),$$ where

$$
A({\phi}^{i}) =
\begin{cases}
|Sp(m_{i},r)| & \text{if } i \equiv 1 (\bmod 2), r
\not=2, \phi = t \pm 1 \text{ and } L = Sp(2a,r); \\
r^{\frac{1}{2}m_{i}}|O(m_i,r)| & \text{if } i \equiv 0 (\bmod 2),
r \not=2, \phi = t \pm 1 \text{ and } L = Sp(2a,r); \\
|O(m_{i},r)| & \text{if } i \equiv 1 (\bmod 2), r \not=2,
\phi = t \pm 1 \text{ and } L = O^{\epsilon}(2a,r);\\
r^{-\frac{1}{2}m_{i}}|Sp(m_i,r)| & \text{if } i \equiv 0 (\bmod
2), r \not=2, \phi = t \pm 1 \text{ and } L = O^{\epsilon}(2a,r);\\
|U(m_{i},r^{deg(\phi)})| & \text{if } t \pm 1 \not=
\phi = \bar{\phi};\\
{|GL(m_{i},r^{deg(\phi)})|}^{1/2} & \text{if } t \pm 1 \not=
\phi \not= \bar{\phi}.\\
\end{cases}
$$

\noindent where, in the second and third cases above,
$|O(m_{i},r)|$ is $|O^{+}(m_{i},r)|$ if the sign chosen for the
parts equal to $i$ is $+$, and is $|O^{-}(m_{i},r)|$ if the sign
chosen for the parts equal to $i$ is $-$.

We are now in the position to state the first theorem of this
section.

\begin{thm}[Wall, \cite{W}]
\label{Wall} Use the notations of this section. If $\Lambda$ is
such that $\mathcal{N}(\Lambda) \not= 0$ and $\lambda_{t+1} =
\emptyset$ when $r=2$, then the number of elements in $L$ with
associated sequence of partitions $\Lambda$ is
$|L|/\prod_{\phi}B(\phi)$.
\end{thm}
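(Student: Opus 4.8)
The plan is to read the count off from the orbit--stabilizer theorem: the elements gathered by $\mathcal{N}(\Lambda)$ form a single conjugacy class of $L$ once the sign data entering the factors $A(\phi^{i})$ is fixed, so the whole statement reduces to computing the order of the centralizer $C_{L}(x)$ of a representative $x$ whose rational canonical form is encoded by $\Lambda$. First I would regard the natural module $V = K^{2a}$ as a module over $K[t]$ with $t$ acting as $x$, and apply the primary decomposition $V = \bigoplus_{\phi} V_{\phi}$, the sum running over the monic irreducible divisors $\phi$ of the characteristic polynomial. The partition $\lambda_{\phi}$ is exactly the record of the sizes of the Jordan blocks supported on $V_{\phi}$, so $\Lambda$ determines, and is determined by, this decomposition together with the block data.

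Next I would analyse how the defining form of $L$ interacts with the primary decomposition. The essential point, due to Wall \cite{W}, is that the form pairs $V_{\phi}$ nondegenerately with $V_{\bar{\phi}}$ and makes distinct pairs $\{V_{\phi}, V_{\bar{\phi}}\}$ mutually perpendicular; this is what forces the already-recorded constraints $\lambda_{\phi} = \lambda_{\bar{\phi}}$ and $\sum_{\phi} |\lambda_{\phi}| \deg(\phi) = 2a$. Consequently $C_{L}(x)$ splits as a direct product of local centralizers, one for each orbit of the involution $\phi \mapsto \bar{\phi}$ on irreducible polynomials, and the entire order computation localizes at each $\phi$.

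The core of the argument is to identify the reductive part of each local centralizer with a product over the Jordan layers of classical groups, whose orders are the factors $A(\phi^{i})$, and to match the three cases with the ambient type. When $\phi \neq \bar{\phi}$, the summand $V_{\phi} \oplus V_{\bar{\phi}}$ is a hyperbolic pairing and a form-preserving centralizer of $x$ is freely determined by its restriction to $V_{\phi}$, so each layer contributes a general linear group over $K[t]/(\phi) \cong \mathbb{F}_{r^{\deg(\phi)}}$; since both $\phi$ and $\bar{\phi}$ occur in the product, each is counted with the square root $|GL(m_{i}, r^{\deg(\phi)})|^{1/2}$. When $t \pm 1 \neq \phi = \bar{\phi}$, the form descends to a Hermitian form and each layer contributes a unitary group $U(m_{i}, r^{\deg(\phi)})$. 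When $\phi = t \pm 1$, the eigenvalue is $\pm 1$, the form restricts to a symplectic or orthogonal form on each layer, and the layer contributes $Sp(m_{i}, r)$ or $O^{\pm}(m_{i}, r)$ according to the parity of $i$ and to whether $L$ is symplectic or orthogonal; the choices of isometry type recorded by the signed partition $\lambda_{t \pm 1}^{\pm}$ account for the appearance of $|O^{+}|$ versus $|O^{-}|$.

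The step I expect to be the main obstacle is the computation of the unipotent part of each local centralizer, which produces the power-of-$r$ prefactor $r^{\deg(\phi)(\sum_{i<j} i\, m_{i} m_{j} + \frac{1}{2}\sum_{i}(i-1)m_{i}^{2})}$ in $B(\phi)$. The exponent is, up to the factor $\deg(\phi)$, precisely half of the dimension of the unipotent radical of the centralizer in the full linear group, the halving being imposed by the requirement that a unipotent transformation preserve the form while shifting higher Jordan layers into lower ones; a careful filtration argument tracking these shifts layer by layer is what yields the stated exponent, and the half-integer corrections $r^{\pm m_{i}/2}$ in the $\phi = t \pm 1$ cases measure the discrepancy between the symplectic and orthogonal normalizations of this unipotent contribution. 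Once each local factor is shown to have order $B(\phi)$, their product is $|C_{L}(x)|$, and the orbit--stabilizer theorem gives $\mathcal{N}(\Lambda) = |L|/|C_{L}(x)| = |L|/\prod_{\phi} B(\phi)$, as claimed. Throughout, I would follow the organization of Fulman \cite{F} for the bookkeeping of the local factors.
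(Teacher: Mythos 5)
The paper does not prove this theorem: it is imported verbatim from Wall \cite{W} (with Fulman \cite{F} as a secondary source), and the text only illustrates it with examples. So there is no internal proof to compare against; what can be judged is whether your sketch is a faithful account of the standard argument, and for the most part it is. The route you describe --- orbit--stabilizer, primary decomposition of $V$ as a $K[t]$-module, the form pairing $V_{\phi}$ with $V_{\bar{\phi}}$ and forcing $\lambda_{\phi}=\lambda_{\bar{\phi}}$, the splitting of $C_{L}(x)$ into local factors, the identification of the reductive quotient of each local factor with the groups $A(\phi^{i})$, and the unipotent radical supplying the power-of-$r$ prefactor --- is exactly Wall's computation, and your reading of the sign convention (the formula gives a single class size once the signs in $\lambda_{t\pm1}^{\pm}$ are fixed, so that the unsigned count is a sum over sign assignments) matches how the paper actually uses the theorem in Example (ii).

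The one genuine gap is that you never engage with the hypothesis ``$\lambda_{t+1}=\emptyset$ when $r=2$,'' which is the only nontrivial restriction in the statement and is there for a reason your argument silently assumes away. Your opening step asserts that the elements counted by $\mathcal{N}(\Lambda)$ form a single conjugacy class once the sign data is fixed; for $r=2$ and $\phi=t+1$ this is false --- unipotent classes of symplectic and orthogonal groups in characteristic $2$ are not classified by the Jordan type together with a simple choice of signs, the centralizer structure is different, and the formula with the factors $A(\phi^{i})$ as defined would not be correct. That is precisely why the paper excludes this case here and handles unipotent elements of $Sp(2a,2)$ separately via Theorem \ref{FG} (Fulman--Guralnick). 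A complete proof along your lines must either restrict to the stated hypothesis from the outset and say why the classification-by-signed-partitions you invoke is valid exactly there, or explain what replaces it when $r=2$ and $\lambda_{t+1}\neq\emptyset$. Beyond that, the unipotent-radical computation you flag as the main obstacle is indeed the technical heart of the matter and is only asserted, not carried out, but as an outline of Wall's proof the structure is right.
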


We will demonstrate Theorem \ref{Wall} with two-three examples,
but only after the statement of Theorem \ref{FG}.

We also need some information on proportions of unipotent elements
in the group $L$ when $r=2$. The $GL(2a,r)$ Jordan canonical form
of a unipotent element in $L$ can be labelled by a partition $\mu$
of $2a$.

Let $\mu$ be a partition of a non-negative integer. Let $\mu_{i}$
be the $i$-th largest part of $\mu$, and let $o(\mu)$ be the
number of odd parts of $\mu$. The symbol $m_{i}$ will denote the
number of parts of $\mu$ of size $i$, and $\mu'$ is the partition
dual to $\mu$ in the sense that the $i$-th largest part $\mu_{i}'$
of $\mu'$ is $m_{i} + m_{i+1} + \ldots$. Let $n(\mu) = \sum_{i}
{\mu_{i}' \choose 2}$. Then we have

\begin{thm}[Fulman, Guralnick, \cite{FG}]
\label{FG} The number of elements of $Sp(2a,r)$ which are
unipotent and have $GL(2a,r)$ rational canonical form of type
$\mu$ is $0$ unless all odd parts of $\mu$ occur with even
multiplicity. If all odd parts of $\mu$ occur with even
multiplicity, it is
$$\frac{r^{a^{2}}\prod_{i=1}^{a}(r^{2i}-1)}
{ r^{n(\mu)+a + o(\mu)/2} \prod_{i}(1 - 1/r^{2}) \ldots (1 -
1/r^{2[m_{i}(\mu)/2]})}.$$
\end{thm}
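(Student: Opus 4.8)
The plan is to count the unipotent elements of $Sp(2a,r)$ of a fixed Jordan type $\mu$ by summing the sizes of the symplectic conjugacy classes carrying that type. By orbit--stabilizer the number sought is $\sum_{C}|Sp(2a,r)|/|C_{Sp(2a,r)}(u_{C})|$, where $C$ runs over the symplectic classes whose $GL(2a,r)$ Jordan form is $\mu$ and $u_{C}\in C$. Since the numerator $r^{a^{2}}\prod_{i=1}^{a}(r^{2i}-1)$ is exactly $|Sp(2a,r)|$, the assertion is really a closed form for the harmonic sum $\sum_{C}1/|C_{Sp(2a,r)}(u_{C})|$. The vanishing statement, namely that no such element exists unless every odd part of $\mu$ has even multiplicity, falls out of Wall's parametrization of these classes by symplectic signed partitions: for $\phi=t\pm1$ the underlying partition is required to have odd parts of even multiplicity.

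For $r$ odd I would invoke Theorem \ref{Wall} directly. A unipotent element has associated sequence of partitions with $\lambda_{t-1}=\mu$ and all other $\lambda_{\phi}=\emptyset$, so the quadratic contribution to $B(t-1)$ is $r^{E}$ with $E=\sum_{i<j}i\,m_{i}m_{j}+\tfrac12\sum_{i}(i-1)m_{i}^{2}$, while the reductive factors are $|Sp(m_{i},r)|$ for odd $i$ and $r^{m_{i}/2}|O^{\epsilon_{i}}(m_{i},r)|$ for even $i$. Summing the class sizes over the signs $\epsilon_{i}$ attached to the even parts, the only sign dependence sits in the orthogonal factors, and the whole simplification is driven by the identity
$$\frac{1}{|O^{+}(2k,r)|}+\frac{1}{|O^{-}(2k,r)|}=\frac{r^{2k}}{|Sp(2k,r)|},$$
which converts each even-part contribution into symplectic shape $r^{m_{i}/2}/|Sp(m_{i},r)|$. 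After this collapse every reductive factor is a symplectic order, and I would finish by bookkeeping the powers of $r$: writing $E=n(\mu)+a-\tfrac12\sum_{i}m_{i}^{2}$ and $|Sp(2m,r)|=r^{2m^{2}+m}\prod_{j=1}^{m}(1-r^{-2j})$, the terms $\tfrac12\sum_{i}m_{i}^{2}$ cancel, the residual $m_{i}/2$ on odd parts assemble into $o(\mu)/2$, the total power of $r$ becomes $r^{n(\mu)+a+o(\mu)/2}$, and the $q$-factorial tails telescope into $\prod_{i}(1-r^{-2})\cdots(1-r^{-2[m_{i}/2]})$. Two checks keep this honest: $\mu=(1^{2a})$ must return $1$, and $\mu=(2^{2})$ in $Sp(4,r)$ must return $r^{2}(r^{4}-1)$, both immediate from the displayed identity applied to $O^{\pm}(2,r)$.

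The hard part is the case $r=2$, which is precisely the case the paper needs and which Theorem \ref{Wall} explicitly excludes: its hypothesis forces $\lambda_{t+1}=\emptyset$ when $r=2$, whereas a nontrivial unipotent element in characteristic $2$ has $\lambda_{t+1}=\lambda_{t-1}=\mu\neq\emptyset$. Here I would fall back on Wall's separate analysis of unipotent classes in the characteristic-$2$ symplectic group, or equivalently on Fulman's cycle-index generating function for unipotent elements, which supplies the centralizer orders in exactly the regime barred above; the latter route has the advantage of treating all $r$ uniformly and bypassing the $r=2$ caveat entirely. In either approach the closed form is uniform in $r$, so once the odd-$r$ computation is in hand the remaining task for $r=2$ is to verify that the characteristic-$2$ centralizer data reproduce the same exponent $n(\mu)+a+o(\mu)/2$ and the same telescoping product. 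This matching of the characteristic-$2$ orbit counts against the already-established formula, rather than any fresh structural input, is where I expect the genuine effort to lie.
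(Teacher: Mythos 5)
The paper does not prove this statement: it is quoted directly from Fulman--Guralnick \cite{FG} (see the attribution in the theorem header and the remark following it), so there is no internal argument to compare yours against. Judged on its own terms, your odd-$r$ plan is the right reconstruction: a unipotent element of $Sp(2a,r)$ with $r$ odd has Wall data $\lambda_{t-1}=\mu$ and $\lambda_{\phi}=\emptyset$ otherwise, the symplectic classes above a fixed $\mu$ are indexed exactly by the sign choices on the even part sizes, and summing $|Sp(2a,r)|/B(t-1)$ over those signs via $1/|O^{+}(2k,r)|+1/|O^{-}(2k,r)|=r^{2k}/|Sp(2k,r)|$ (which is a correct identity) does collapse everything to the stated formula; your two sanity checks for $\mu=(1^{2a})$ and $\mu=(2^{2})$ are also correct. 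One piece of bookkeeping is missing even here: when an even part size $i$ has \emph{odd} multiplicity $m_{i}$, the factor $A(\phi^{i})$ involves the odd-dimensional group $O(m_{i},r)$, the two signs give isomorphic groups, and the relevant identity is $1/|O^{+}|+1/|O^{-}|=2/|O(m_{i},r)|$ rather than the one you display; this is precisely why the product in the theorem carries $[m_{i}/2]$ and not $m_{i}/2$, and that case has to be run separately.

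The genuine gap is $r=2$, which you have located correctly but not filled. Theorem \ref{Wall} as stated excludes nontrivial unipotent elements in characteristic $2$ (it forces $\lambda_{t+1}=\emptyset$), and this is not a removable hypothesis: Wall's parametrization of unipotent classes of $Sp(2a,2)$ is not by signed partitions at all but by partitions decorated with additional invariants, with different centralizer orders, and several distinct classes can share a single Jordan type $\mu$. The assertion that the resulting class-by-class sum still equals the uniform expression $r^{a^{2}}\prod_{i=1}^{a}(r^{2i}-1)\big/\bigl(r^{n(\mu)+a+o(\mu)/2}\prod_{i}(1-r^{-2})\cdots(1-r^{-2[m_{i}/2]})\bigr)$ is the actual content of the Fulman--Guralnick result; it does not follow from the odd-characteristic computation by ``matching,'' but requires redoing the sum with the characteristic-$2$ centralizer data. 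Since $r=2$ is the only case in which the paper invokes this theorem (Example (i) and the $Sp(2a,2)$ rows of the table in Section 3), deferring it to ``Wall's separate analysis or Fulman's cycle index'' leaves your proposal at the level of the paper's own citation rather than a proof of the statement in the regime where it is used.
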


Note that this theorem holds even when $r$ is a prime power,
however, we are only interested in the case when $r$ is a prime
and mostly when $r=2$.

Next we will give a few examples on how Theorems \ref{Wall} and
\ref{FG} can be used to estimate the numbers of certain kinds of
elements. For the reader's convenience we recall the formulas for
the orders of the classical groups we will be working with.

$$|Sp(2m,r)| = r^{m^2} \prod_{i=1}^{m}(r^{2i}-1);$$

$$|O^{\epsilon}(2m,r)| = 2 r^{m(m-1)} (r^{m}-\epsilon) \prod_{i=1}^{m-1}(r^{2i}-1);$$

$$|O^{\epsilon}(2m+1,r)| = 2r^{m} \prod_{i=0}^{m-1} (r^{2m} - r^{2i}).$$

\noindent {\it Example (i).} Using Theorem \ref{FG} we count
elements $x$ of $Sp(2a,2)$ of type (i) of Section 2. (The group
$Sp(2a,2)$ has two $2$-transitive permutation representations, one
with point-stabilizer $O^{+}(2a,2)$ and one with point-stabilizer
$O^{-}(2a,2)$. So the groups $O^{\epsilon}(2a,2)$ can be
considered as subgroups of $Sp(2a,2)$.) Here $\mu =
(2^{2i},1^{2a-4i})$, $o(\mu) = 2a - 4i$, $m_{2} = 2i$, $m_{1} = 2a
- 4i$, $\mu_{1}' = 2a - 2i$, $\mu_{2}' = 2i$, and $n(\mu) =
(a-i)(2a-2i-1) + i(2i-1)$. Thus the number of elements $x$ we are
looking for is (at most)
$$\frac{2^{i(i+1)}\prod_{j=1}^{a} (2^{2j}-1)}{\Big( \prod_{j=1}^{a-2i}(2^{2j}-1) \Big) \cdot \Big( \prod_{j=1}^{i}(2^{2j}-1) \Big)}.$$

\smallskip

\noindent {\it Example (ii).} Using Theorem \ref{Wall} we count
elements $x$ of $Sp(2a,3)$ of type (ii) of Section 2. If $\phi =
t+1$ then $m_{1} = 2i$ and $B(\phi) = |Sp(2i,3)|$ where $|Sp(0,3)|
= 1$. If $\phi = t-1$ then $m_{1} = 2(a-i-j)$, $m_{2} = j$ and
$$B(\phi) = 3^{2(a-i-j)j + (1/2)j(j+1)} |Sp(2(a-i-j),3)| \cdot |O^{\epsilon}(j,3)|$$
where $|O^{\epsilon}(0,3)| = 1$ and $\epsilon$ is the sign chosen
for the parts of size $2$. Hence the number of elements $x$ we are
looking for is (at most)
$$\Bigg( \frac{|Sp(2a,3)|}{3^{2(a-i-j)j + (1/2)j(j+1)} \cdot |Sp(2i,3)| \cdot |Sp(2(a-i-j),3)|} \Bigg) \Bigg(
\frac{1}{|O^{+}(j,3)|} + \frac{1}{|O^{-}(j,3)|} \Bigg).$$

\smallskip

\noindent {\it Example (iii).} Using Theorem \ref{Wall} we count
elements $x$ of $Sp(2a,r)$ of types (iii) and (iv) of Section 2.
(We have $r=5$ in the first case and $r=7$ in the second.) Suppose
first that $(r,a,i) \not= (5,2,1)$. Then $\phi = t+1$, $m_{1} =
2a$, and $B(\phi) = |Sp(2a,r)|$. Hence the number of such elements
$x$ is $1$. Now let $(r,a,i) = (5,2,1)$. Then the number of such
elements $x$ is $5^{2} (5^{2}+1)$.

\smallskip

Consider the table below. The star in a row corresponding to the
group $Sp(2a,r)$ stands for the positive integer $|Sp(2a,r)|$. Let
$A$ and $B$ be two consecutive entries in the row corresponding to
$Sp(2a,r)$. Suppose that $A$ (respectively $B$) lies in the column
corresponding to the fraction $c_{A}$ (respectively $c_{B}$).
(Clearly $c_{A} < c_{B}$.) Now $|R|B$ is an upper bound for the
number of elements $x$ in $G$ with $c_{A} < \mathrm{rdim}(x) \leq
c_{B}$.

\begin{center}
\begin{tabular}{|c|ccccccccccc|}

 \hline
  & $3/7$ & $11/25$ & $1/2$ & $17/32$ & $5/9$ & $9/16$ & $4/7$ & $3/5$ & $2/3$ &
     $5/8$ & $3/4$ \\
\hline
  $Sp(16,2)$ & & & * & $2^{72}$ & & $2^{67}$ & & & & $2^{53}$ & $2^{31}$ \\

  $Sp(14,2)$ & & & * & & & $2^{55}$ & &  & & $2^{45}$ & $2^{27}$ \\

  $Sp(8,3)$  & & & & & * & & & & $3^{29}$ & &  \\

  $Sp(12,2)$ & & & * & & & $2^{43}$ & & & & $2^{37}$ & $2^{23}$ \\

  $Sp(10,2)$ & & & * & & & & & & & $2^{29}$ & $2^{19}$ \\

  $Sp(6,3)$ & &  & & & * & & & & $3^{13}$ & & \\

  $Sp(4,5)$ & & * & & & & & & $651$ & & & \\

  $Sp(8,2)$ & & & * & & & & & & & $2^{21}$ & $2^{15}$  \\

  $Sp(4,3)$ & &  & & & * & & & & $982$ & &  \\

  $Sp(6,2)$ & & & * & & & & & & & & $2^{11}$ \\

  $Sp(2,7)$ & * & & & & &  & $1$ & & & & \\

  $Sp(2,5)$ & & * & & &  &  & & $1$ & & &  \\

  $Sp(4,2)$ & & & * & & & & & & & & $2^{7}$ \\

  $Sp(2,3)$ & & & & & * &  & &  & $10$ & &  \\

  $Sp(2,2)$ & & & * & & & & & & & & \\

\hline
\end{tabular}
\end{center}

\section{The proof of Theorem \ref{extraspecial}}

Let $k(X)$ denote the number of conjugacy classes of a finite
group $X$. This is also the number of complex irreducible
characters of $X$. We will use the following important result.

\begin{lem}
\label{lGT} Let $G$ be a group of linear transformations of the
finite vector space $V$, and let $GV$ be the semidirect product of
$V$ and $G$. Then $$k(GV) = \sum k(\mathrm{Stab}_{G}(\lambda))$$
where $\lambda$ is a complex irreducible character of $V$ and the
sum is over a set of representatives ($\lambda \in
\mathrm{Irr}(V)$) of the $G$-orbits of $\mathrm{Irr}(V)$.
\end{lem}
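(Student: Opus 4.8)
The plan is to compute $k(GV)$ directly as the number of irreducible complex characters of the semidirect product $GV$, using Clifford theory with respect to the abelian normal subgroup $V$. First I would recall that since $V \trianglelefteq GV$ is abelian, the irreducible characters of $V$ are exactly the linear characters, i.e. $\mathrm{Irr}(V)$ is the dual group $\hat V$, and $G$ acts on $\hat V$ by $(g\cdot\lambda)(v) = \lambda(g^{-1}vg)$. Every $\chi \in \mathrm{Irr}(GV)$ lies over some $\lambda \in \mathrm{Irr}(V)$, and by Clifford's theorem the set of $\lambda$'s appearing in the restriction $\chi|_V$ forms a single $G$-orbit; hence the irreducible characters of $GV$ are partitioned according to which $G$-orbit on $\hat V$ they lie over. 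So I would fix a set of orbit representatives $\lambda$ (as in the statement) and count, for each one, how many $\chi \in \mathrm{Irr}(GV)$ lie over that orbit.

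The key step is to identify that count with $k(\mathrm{Stab}_G(\lambda))$. Let $T = \mathrm{Stab}_G(\lambda)$ be the inertia subgroup (stabilizer) of $\lambda$ in $G$, so that the inertia group of $\lambda$ in $GV$ is $TV$. The Clifford correspondence gives a bijection between the irreducible characters of $GV$ lying over $\lambda$ and the irreducible characters of $TV$ lying over $\lambda$. The crucial simplification is that $\lambda$, being a linear character of the abelian normal subgroup $V$ of $TV$, is stabilized by all of $T$ by construction, so $\lambda$ extends to a linear character $\tilde\lambda$ of $TV$ (one can extend $\lambda$ to $TV$ by setting it trivial on a complement, since $V$ is normal with complement $T$ in $TV$). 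Multiplication by $\tilde\lambda$ then furnishes a bijection between $\mathrm{Irr}(TV \mid \lambda)$ and those irreducible characters of $TV$ that contain $V$ in their kernel --- and the latter are precisely the irreducible characters of the quotient $TV/V \cong T$, i.e. there are exactly $k(T) = k(\mathrm{Stab}_G(\lambda))$ of them.

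Summing over a transversal of the $G$-orbits on $\mathrm{Irr}(V)$ then yields $k(GV) = \sum_\lambda k(\mathrm{Stab}_G(\lambda))$, as claimed. I expect the main obstacle to be the extension step: verifying cleanly that $\lambda \in \mathrm{Irr}(V)$ extends to a linear character of its inertia group $TV$, so that Gallagher's theorem (tensoring by $\tilde\lambda$ sets up the bijection $\mathrm{Irr}(TV\mid\lambda)\leftrightarrow\mathrm{Irr}(T)$) applies. This is where the abelian-ness of $V$ and the split structure of the semidirect product $TV = V \rtimes T$ are used in an essential way --- the complement $T$ lets one define $\tilde\lambda$ unambiguously --- and it is worth stating carefully rather than invoking as automatic. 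The remainder (Clifford's theorem and the Clifford correspondence) is standard and I would simply cite it.
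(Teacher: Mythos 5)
Your proof is correct and is exactly the standard Clifford--Gallagher argument: the paper states this lemma without proof, treating it as a known result, and your route (partition $\mathrm{Irr}(GV)$ by $G$-orbits on $\mathrm{Irr}(V)$, apply the Clifford correspondence to reduce to the inertia group $TV=V\rtimes T$, extend $\lambda$ to $TV$ via the complement, and invoke Gallagher's theorem to get $k(T)$ characters over each orbit) is the standard derivation. The one step you flag as needing care --- that $\tilde\lambda(vt)=\lambda(v)$ is multiplicative --- does go through precisely because $T$ stabilizes $\lambda$, so there is no gap.
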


Notice that, in the previous two sections, our vector space was
finite dimensional over an algebraically closed field. If $F$ is a
finite subfield of an algebraically closed field $K$ and $V$ is an
$F\langle x \rangle$-module for some cyclic subgroup $\langle x
\rangle$, then, in a natural way, $V$ also has the structure of an
$K\langle x \rangle$-module where $\dim_{K}(V) = \dim_{F}(V)$.
Notice that we also have $\dim_{K}(C_{V}(x)) =
\dim_{F}(C_{V}(x))$.

In this section $G$ will denote a slightly different subgroup as
in the previous two sections. Let $r$ be a prime and let $R$ be an
$r$-group of symplectic type with $|R/Z(R)| = r^{2a}$ for some
positive integer $a$. Let $V$ be a faithful, absolutely
irreducible $KR$-module of dimension $r^{a}$ for some finite field
$K$. View $V$ as an $F$-vector space where $F$ is the prime field
of $K$. Let $G$ be a subgroup of $GL(V)$ which contains $R$ as a
normal subgroup. Let $A = C_{G}(K^{*})$.

The group $A$ is such that $A/R \leq Sp(2a,r)$ or $A/R \leq
O^{\epsilon}(2a,2)$ (latter only if $r=2$ and $|Z(R)|=2$) and
$|G/A| \leq k$ where $|K| = p^{k}$ and $p$ is prime. We have
$\dim(C_{V}(x))/\dim V \leq 1/2$ for every $x \in G \setminus A$.
(Indeed, there exists $z \in K^{*}$ with $1 \not= [x,z] \in K^{*}$
and thus $C_{V}([x,z]) = 1$. Hence $C_{V}(x^{-1}) \cap
C_{V}(x^{z}) = 1$. This implies $|V| \geq
|C_{V}(x^{-1})C_{V}(x^{z})| = |C_{V}(x^{-1})||C_{V}(x^{z})| =
{|C_{V}(x)|}^{2}$.) By this fact, by Theorem \ref{dim}, and by the
Orbit-Counting Lemma, the number of $G$-orbits on $V$ is at most
$(|V|/|G|) + |V|^{c}$ where $c = (r+1)/2r$. By Brauer's
Permutation Lemma, the number of $G$-orbits on $\mathrm{Irr}(V)$
is also at most $(|V|/|G|) + |V|^{c}$.

Let $m$ be the maximum of the $k(\mathrm{Stab}_{G}(\lambda))$'s as
$\lambda$ runs through the set of all non-trivial linear
characters of $V$. Then Lemma \ref{lGT} gives
\begin{equation}
\label{e1} k(GV) \leq k(G) + m \large( (|V|/|G|) + {|V|}^{c} -1
\large) .
\end{equation}
Let $\lambda$ be a non-trivial character in $\mathrm{Irr}(V)$.
Then $R \cap \mathrm{Stab}_{G}(\lambda)$ is an Abelian subgroup of
$R$ of order $r^{t}$ for some non-negative integer $t$ at most
$a$. Hence $|G:\mathrm{Stab}_{G}(\lambda)| \geq r^{2a+1-t}$. This
gives $m \leq |G|/r^{a+1}$. Applying these estimates to (\ref{e1})
we get
\begin{equation}
\label{e2} k(GV) \leq |G| + (|V|/r^{a+1}) + (|G|/r^{a+1}) \large(
{|V|}^{c} -1  \large) .
\end{equation}
It is possible to see that the right-hand-side of (\ref{e2}) is
less than $|V|$ unless $a \leq 8$ and $r=2$, $a \leq 4$ and $r=3$,
$a \leq 2$ and $r=5$, or $a=1$ and $r=7$. (Here we used the fact
that $r \mid (|K|-1)$ and that $|K| \geq 5$ in case $|Z(R)|=4$.)

Now let $(a,r)$ be such an exceptional pair.

For $r = 3$, $5$, $7$ let $c_{1} = 2/3$, $3/5$, $4/7$ and $c_{2} =
5/9$, $1/2$, $1/2$ respectively. By use of the table of the
previous section, we may give an upper bound $d_{1}$ for the
number of elements $x$ in $G$ with $c_{2} < \dim(C_{V}(x))/\dim V
\leq c_{1}$. Using this, Lemma \ref{lGT} and our bound for $m$, we
get
\begin{equation}
\label{e4}
\begin{split}
k(GV) & \leq k(G) + m \large( (|V|/|G|) + (d_{1}/|G|){|V|}^{c_{1}}
+ {|V|}^{c_{2}} \large) \\ & \leq |G| + (|V|/r^{a+1}) +
(d_{1}/r^{a+1}){|V|}^{c_{1}} + (|G|/r^{a+1}) {|V|}^{c_{2}}.
\end{split}
\end{equation}

For $r=2$ we use a slightly more detailed bound for $k(GV)$. For
each integer $i$ with $1 \leq i \leq 4$ let $d_{i}$ be the upper
bound (coming from the table of the previous section) for the
number of elements $x$ in $G$ with $$\dim(C_{V}(x))/\dim V =
(1/2)(1+2^{-i}).$$ Note that $d_{i} = 0$ whenever $a < 2i$. Then,
as before,
\begin{equation}
\label{f}
\begin{split}
k(GV) & \leq k(G) + m \large( (|V|/|G|) +
\sum_{i=1}^{4} \large( (d_{i}/|G|){|V|}^{(1/2)(1+2^{-i})} \large) + {|V|}^{1/2} \large) \\
& \leq |G| + (|V|/2^{a+1}) + \sum_{i=1}^{4} \large(
(d_{i}/2^{a+1}){|V|}^{(1/2)(1+2^{-i})} \large) + (|G|/2^{a+1})
{|V|}^{1/2}.
\end{split}
\end{equation}

Now let $a=8$ and $r=2$. By the last paragraph of Section 2 and
the table of Section 3, there are at most $2^{49}$ elements $x$ in
$G$ with $\mathrm{rdim}(x) = \dim (C_{V}(x))/\dim V = 3/4$, at
most $2^{71}$ elements $x$ with $\mathrm{rdim}(x) = 5/8$, at most
$2^{85}$ elements $x$ with $\mathrm{rdim}(x) = 9/16$, and at most
$2^{90}$ elements $x$ with $\mathrm{rdim}(x) = 17/32$. This
together with (\ref{f}) shows that $k(GV)$ is at most
$$|G| + (|V|/2^{9}) + 2^{40}{|V|}^{3/4} +
2^{62}{|V|}^{5/8} + 2^{76}{|V|}^{9/16} + 2^{81}{|V|}^{17/32} +
(|G|/2^{9}){|V|}^{1/2} \leq |V|.$$

Now let $a=7$ and $r=2$. By the last paragraph of Section 2, the
table of Section 3, and (\ref{f}) we have
$$k(GV) \leq |G| + (|V|/2^{8}) + 2^{35}{|V|}^{3/4} + 2^{53}{|V|}^{5/8} +
2^{63}{|V|}^{9/16} + (|G|/2^{8}){|V|}^{1/2} \leq |V|.$$

Now let $a=4$ and $r=3$. By the table of the previous section,
Theorem \ref{dim}, and (\ref{e4}) we have
$$k(GV) \leq |G| + (|V|/3^{5}) + (3^{33}){|V|}^{2/3}  + (|G|/3^{5}){|V|}^{5/9} \leq |V|.$$


Now let $a=5$ and $r=2$. By the last paragraph of Section 2, the
table of Section 3, and (\ref{f}) we have
$$k(GV) \leq |G| + (|V|/2^{6}) + 2^{25}{|V|}^{3/4} + 2^{35}{|V|}^{5/8}
+ (|G|/2^{6}){|V|}^{1/2}.$$ This is at most $|V|$ for $|V| \geq
17^{32}$. Notice that the possible prime divisors of $|G|$ are
$2$, $3$, $5$, $7$, $11$, $17$, $31$, and the prime divisors of
$k$. Thus by the (classical) $k(GV)$ theorem we have $k(GV) \leq
|V|$ unless $|K| = 3$, $5$, $7$, $9$, or $11$. In all these
exceptional cases we have $k(GV) \leq 2^{119}$.

Now let $a=3$ and $r=3$. By the table of the previous section,
Theorem \ref{dim}, and (\ref{e4}) we have
$$k(GV) \leq |G| + (|V|/3^{4}) + 3^{16}{|V|}^{2/3} + (|G|/3^{4}){|V|}^{5/9}.$$
This is at most $|V|$ for $|V| \geq 13^{27}$. Notice that the
possible prime divisors of $|G|$ are $2$, $3$, $5$, $7$, $13$, and
the prime divisors of $k$. Thus by the (classical) $k(GV)$ theorem
we have $k(GV) \leq |V|$ unless $|K| = 4$ or $7$. In all these
exceptional cases we have $k(GV) \leq 2^{82}$.

Now let $a=2$ and $r=5$. By the table of the previous section,
Theorem \ref{dim}, and (\ref{e4}) we have
$$k(GV) \leq |G| + (|V|/5^{3}) + 651 \cdot 5^{2}{|V|}^{3/5} + (|G|/5^{3}){|V|}^{1/2} \leq |V|.$$

Now let $a=4$ and $r=2$. By the last paragraph of Section 2, the
table of Section 3, and (\ref{f}) we have
$$k(GV) \leq |G| + (|V|/2^{5}) +  2^{20}{|V|}^{3/4} + 2^{26}{|V|}^{5/8}
+ (|G|/2^{5}){|V|}^{1/2}.$$ This is at most $|V|$ for $|V| \geq
41^{16}$. Notice that the possible prime divisors of $|G|$ are
$2$, $3$, $5$, $7$, $17$, and the prime divisors of $k$. Thus by
the (classical) $k(GV)$ theorem we have $k(GV) \leq |V|$ unless
$|K| = 3$, $5$, $7$, $9$, $17$, $25$, or $27$. In all these
exceptional cases we have $k(GV) \leq 2^{82}$.

Now let $a=2$ and $r=3$. By the table of the previous section,
Theorem \ref{dim}, and (\ref{e4}) we have
$$k(GV) \leq |G| + (|V|/3^{3}) + 8838 {|V|}^{2/3} + (|G|/3^{3}){|V|}^{5/9}.$$
This is at most $|V|$ for $|V| \geq 31^{9}$. Notice that the
possible prime divisors of $|G|$ are $2$, $3$, $5$, and the prime
divisors of $k$. Thus by the (classical) $k(GV)$ theorem we have
$k(GV) \leq |V|$ unless $|K| = 4$, $16$, or $25$. In all these
exceptional cases we have $k(GV) \leq 2^{44}$.

Now let $a=3$ and $r=2$. By the last paragraph of Section 2, the
table of Section 3, and (\ref{f}) we have
$$k(GV) \leq |G| + (|V|/2^{4}) + 2^{15}{|V|}^{3/4} +
(|G|/2^{4}){|V|}^{1/2}.$$ This is at most $|V|$ for $|V| \geq
191^{8}$. Notice that the possible prime divisors of $|G|$ are
$2$, $3$, $5$, $7$, and the prime divisors of $k$. Thus by the
(classical) $k(GV)$ theorem we have $k(GV) \leq |V|$ unless $|K| =
3$, $5$, $7$, $9$, $25$, $27$, $49$, $81$, or $125$. In all these
exceptional cases we have $k(GV) \leq 2^{58}$.


Now we turn to the treatment of the case $a=6$ and $r=2$. We need
a couple of lemmas.

\begin{lem}[Nagao, \cite{N}]
\label{lN} Let $N$ be a normal subgroup in a finite group $X$.
Then $k(X) \leq k(N) \cdot k(X/N)$.
\end{lem}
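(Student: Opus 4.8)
The plan is to prove the inequality by fibering the conjugacy classes of $X$ over those of the quotient $X/N$ and bounding the size of each fibre by $k(N)$. Writing $\pi \colon X \to Q := X/N$ for the natural projection, conjugation in $X$ descends to conjugation in $Q$, so $\pi$ carries each $X$-conjugacy class into a single $Q$-conjugacy class. Since $Q$ has exactly $k(X/N)$ conjugacy classes, it will suffice to show that the preimage $\pi^{-1}(\bar C)$ of any one $Q$-class $\bar C$ contains at most $k(N)$ conjugacy classes of $X$; summing over the $k(X/N)$ classes $\bar C$ then yields $k(X) \le k(N)\,k(X/N)$.

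First I would fix a class $\bar C$ of $Q$, choose a representative $\bar x \in \bar C$ and a lift $x \in X$, and set $S := \pi^{-1}(C_Q(\bar x))$, so that $N \trianglelefteq S$ and $S/N = C_Q(\bar x)$. The key reduction is the identification of the $X$-classes inside $\pi^{-1}(\bar C)$ with the $S$-orbits on the coset $xN$ under conjugation. Every element of $\pi^{-1}(\bar C)$ is $X$-conjugate into $xN$ (conjugate its image back to $\bar x$ in $Q$ and lift), so each $X$-class in the preimage meets $xN$; and two elements of $xN$ are $X$-conjugate precisely when they are $S$-conjugate, because any conjugating element must fix $\bar x$ in $Q$ and hence lie in $S$. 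Thus the number of $X$-classes over $\bar C$ equals the number of $S$-orbits on $xN$.

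It then remains to bound this last quantity by $k(N)$. Since $N \le S$, every $S$-orbit on $xN$ is a union of $N$-orbits, so the number of $S$-orbits is at most the number of $N$-orbits on $xN$ under conjugation. Transporting along the bijection $N \to xN$, $n \mapsto xn$, the $N$-conjugation action on $xN$ becomes the twisted action in which $m \in N$ sends $n$ to $\sigma(m)\,n\,m^{-1}$, where $\sigma$ denotes the automorphism of $N$ induced by $x$. The number of orbits of this twisted action is exactly the number of $N$-orbits on the coset $N\sigma$ inside $N \rtimes \langle \sigma \rangle$, which equals the number of ordinary conjugacy classes of $N$ left invariant by $\sigma$, and is therefore at most $k(N)$.

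I expect the main obstacle to be this final inequality, that the number of $\sigma$-twisted conjugacy classes of $N$ is at most $k(N)$: the coset bookkeeping above is routine, but this is where the genuine content sits. I would establish it through the standard fixed-point count, showing that the number of $N$-orbits on $N\sigma$ equals the number of $\sigma$-invariant classes of $N$ (for instance by a Burnside-type computation, or via Brauer's permutation lemma comparing $\langle \sigma \rangle$-fixed classes with $\langle \sigma \rangle$-fixed irreducible characters), which is visibly bounded by $k(N)$; alternatively this well-known fact may simply be cited.
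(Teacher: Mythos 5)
Your argument is correct and complete. The paper itself gives no proof of this lemma --- it is stated as a known result and attributed to Nagao with a citation to \cite{N} --- so there is no internal proof to compare against; your writeup supplies a valid self-contained one. Each step checks out: the fibering of $X$-classes over $Q=X/N$-classes, the identification of the $X$-classes above a fixed $Q$-class $\bar C$ with the $S$-orbits on the coset $xN$ (the centralizer condition forcing any conjugating element into $S$ is exactly right), the crude bound by $N$-orbits on $xN$, and the final reduction to twisted conjugacy. For the last step, your claimed equality between the number of $N$-orbits on $N\sigma$ and the number of $\sigma$-invariant classes of $N$ is indeed standard and provable by the Burnside count you sketch; note, though, that you can shortcut it entirely: the number of $N$-orbits on $xN$ under conjugation is $\frac{1}{|N|}\sum_{m\in N}|\{y\in xN: ym=my\}|$, and for each $m$ the set $\{y\in xN: ym=my\}$ is either empty or a coset of $C_N(m)$, so the sum is at most $\frac{1}{|N|}\sum_{m\in N}|C_N(m)|=k(N)$. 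That avoids invoking twisted classes or Brauer's permutation lemma altogether, but your longer route is also sound.
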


\begin{lem}
\label{root} Let $H$ be a subgroup of a finite group $X$. Then
$k(H) \leq \sqrt{|X| \cdot k(X)}$.
\end{lem}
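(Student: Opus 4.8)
The plan is to reduce everything to the fusion of conjugacy classes of $H$ into those of $X$. Let $C_1,\dots,C_t$ be the conjugacy classes of $X$, so $t=k(X)$, and for each $i$ let $n_i$ denote the number of conjugacy classes of $H$ contained in $C_i$. Since $H$-conjugate elements are $X$-conjugate, every class of $H$ lies in exactly one $C_i$, and hence $k(H)=\sum_{i}n_i$, where only the at most $k(X)$ indices with $C_i\cap H\neq\emptyset$ actually contribute. Applying the Cauchy--Schwarz inequality to this sum over the nonzero terms gives
\begin{equation*}
k(H)^2=\Big(\sum_i n_i\Big)^2\le k(X)\sum_i n_i^2,
\end{equation*}
so it suffices to establish the bound $\sum_i n_i^2\le |X|$.

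The key step, and the part I expect to be the crux, is a uniform estimate $n_i\le |X:H|$ on the fusion numbers. Here I would argue as follows. Because $H\le X$, conjugation by an element of $H$ preserves both $C_i$ and $H$, hence preserves $C_i\cap H$; the classes of $H$ inside $C_i$ are precisely the orbits of $H$ acting by conjugation on $C_i\cap H$, and these form a subset of the orbits of $H$ on the whole class $C_i$. Viewing $C_i$ as the transitive $X$-set $X/C_X(a)$ for a fixed $a\in C_i\cap H$, the $H$-orbits on $C_i$ are in bijection with the double cosets in $H\backslash X/C_X(a)$; since each such double coset is a union of right cosets of $H$ and there are exactly $|X:H|$ of the latter, there are at most $|X:H|$ of them. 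Therefore $n_i\le |X:H|$.

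With this in hand I would finish by combining three elementary facts. First, $\sum_i n_i^2\le(\max_i n_i)\sum_i n_i\le |X:H|\cdot k(H)$ by the fusion bound just proved. Second, the trivial inequality $k(H)\le |H|$. Together these yield $\sum_i n_i^2\le |X:H|\cdot|H|=|X|$, exactly the estimate needed. Substituting this into the Cauchy--Schwarz inequality above gives $k(H)^2\le |X|\,k(X)$, that is, $k(H)\le\sqrt{|X|\cdot k(X)}$. I expect the only genuinely nonobvious point to be the passage from counting $H$-classes inside a single $X$-class to the double-coset bound $n_i\le |X:H|$; everything else is a routine assembly of Cauchy--Schwarz with the orbit-counting identity $k(H)=\sum_i n_i$.
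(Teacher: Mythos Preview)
Your proof is correct and is essentially the paper's argument: the paper simply invokes Gallagher's inequality $k(H)\le |X:H|\,k(X)$ and multiplies it by the trivial bound $k(H)\le |H|$ to obtain $k(H)^2\le |X|\,k(X)$. Your fusion estimate $n_i\le |X:H|$ via the double-coset count is exactly the proof of Gallagher's inequality, and summing it directly already yields $k(H)=\sum_i n_i\le |X:H|\,k(X)$; the Cauchy--Schwarz step is therefore a detour, since your chain $k(H)^2\le k(X)\sum_i n_i^2\le k(X)\,|X:H|\,k(H)$ just reproduces Gallagher's bound after dividing through by $k(H)$.
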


Lemma \ref{root} is an easy consequence of a result of Gallagher
\cite{Ga} saying that $k(H) \leq (X:H) k(X)$.

Recall what $m$ and $k$ were above.

\begin{lem}
\label{6,2} Let $a=6$ and $r=2$. Then $m \leq 2^{50} k$.
\end{lem}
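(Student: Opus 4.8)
The plan is to estimate $k(\mathrm{Stab}_{G}(\lambda))$ for an arbitrary non-trivial $\lambda \in \mathrm{Irr}(V)$ by stripping off the normal subgroups $A$ and $R$ one at a time with Nagao's Lemma \ref{lN}, and then taming the remaining symplectic data with the square-root estimate of Lemma \ref{root}. Write $S = \mathrm{Stab}_{G}(\lambda)$. Since $A = C_{G}(K^{*}) \trianglelefteq G$ and $|G/A| \le k$, Lemma \ref{lN} gives $k(S) \le k(S\cap A)\cdot k(S/(S\cap A))$, while $k(S/(S\cap A)) \le |S/(S\cap A)| \le |G/A| \le k$, so $k(S) \le k\cdot k(S\cap A)$. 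Next, $R \le A$ meets $S$ in $R_{\lambda} := R\cap S$, which---as observed just before (\ref{e2})---is abelian of order $2^{t}$ with $t \le a = 6$; hence $k(R_{\lambda}) = |R_{\lambda}| \le 2^{6}$, and a second application of Lemma \ref{lN} inside $S\cap A$ yields $k(S\cap A) \le 2^{t}\cdot k(\overline{S})$, where $\overline{S} := (S\cap A)R/R$ is the image of $S\cap A$ in $A/R \le Sp(12,2)$.

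The geometry enters through the position of $\overline{S}$ in $Sp(12,2)$. Because $R \trianglelefteq G$ and $S$ fixes $\lambda$, the group $S$ normalizes $R_{\lambda}$, so $\overline{S}$ stabilizes the image $\overline{R_{\lambda}}$ of $R_{\lambda}$ in the symplectic space $R/Z(R)$. As $R_{\lambda}$ is abelian the commutator form vanishes on it, so $\overline{R_{\lambda}}$ is totally isotropic, of some dimension $d \le a = 6$. Thus $\overline{S}$ lies in the parabolic subgroup $P_{d}$ of $Sp(12,2)$ stabilizing $\overline{R_{\lambda}}$, with Levi factor $L_{d} \cong GL(d,2)\times Sp(12-2d,2)$ and unipotent radical $U_{d}$ of order $2^{d(12-2d)+\binom{d+1}{2}}$. (When $|Z(R)| = 2$ the ambient group is $O^{\epsilon}(12,2) \le Sp(12,2)$, so nothing is lost by working inside $Sp(12,2)$.)

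A final application of Lemma \ref{lN}, now through $U_{d}\cap\overline{S}$, gives $k(\overline{S}) \le |U_{d}|\cdot k(\overline{S}U_{d}/U_{d})$, and $\overline{S}U_{d}/U_{d}$ is a subgroup of $L_{d}$. Estimating its class number by Lemma \ref{root} produces $k(\overline{S}) \le |U_{d}|\sqrt{|L_{d}|\,k(L_{d})}$, whence $k(S) \le k\cdot 2^{t}\,|U_{d}|\sqrt{|L_{d}|\,k(L_{d})}$. It then remains to insert $|U_{d}|$, the orders $|GL(d,2)|$ and $|Sp(12-2d,2)|$ from the formulas recalled in Section 3, and the class numbers $k(GL(d,2))$ and $k(Sp(12-2d,2))$, and to check that the resulting exponent is below $50$ for every isotropic dimension $d \in \{0,1,\dots,6\}$. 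The (near-)extremal cases are $d = 5$ and $d = 6$; for instance at $d = 6$ one has $L_{6} = GL(6,2)$, so $k(\overline{S}) \le 2^{21}\sqrt{|GL(6,2)|\,k(GL(6,2))}$, which is about $2^{41}$, giving $k(S) \le 2^{6}\cdot 2^{41}\cdot k = 2^{47}k$.

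I expect the only real difficulty to be this concluding numerical bookkeeping rather than any structural point. The parabolic $P_{6}$ already has order roughly $2^{55}$, so the naive bound $k(\overline{S}) \le |\overline{S}|$ is far too weak; it is essential to beat the size of the Levi factor using the square-root Lemma \ref{root} together with reasonably sharp values of $k(GL(d,2))$ and $k(Sp(12-2d,2))$. One must also verify that the maximum of the exponent over all $d$ (and over the two cases $|Z(R)| = 2$ and $|Z(R)| = 4$) indeed stays under $50$, since the margin in the extremal cases $d = 5,6$ is only a few powers of $2$.
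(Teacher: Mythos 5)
Your argument is correct in outline and rests on the same two tools as the paper's proof --- Nagao's Lemma \ref{lN} applied twice, to strip off the abelian group $R\cap\mathrm{Stab}_G(\lambda)$ (a factor $2^{6}$) and the quotient by $A$ (a factor $k$), followed by the square-root estimate of Lemma \ref{root} --- but you insert an extra layer that turns out to be unnecessary. Where you pass to the parabolic subgroup $P_{d}$ of $Sp(12,2)$ stabilizing the totally isotropic image of $R\cap\mathrm{Stab}_G(\lambda)$, peel off its unipotent radical, and apply Lemma \ref{root} to the Levi factor $GL(d,2)\times Sp(12-2d,2)$ for each $d$, the paper simply applies Lemma \ref{root} to the whole group: any subgroup $H$ of $X=Sp(12,2)$ satisfies $k(H)\le\sqrt{|X|\,k(X)}<2^{44}$, the estimate for $k(Sp(12,2))$ coming from \cite[Theorem 3.13]{FulGur}, and $2^{6}\cdot k\cdot 2^{44}=2^{50}k$ follows at once. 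Your worry that one must ``beat the size of the Levi factor'' is therefore not quite to the point: since $|Sp(12,2)|$ is roughly $2^{77.5}$ and $k(Sp(12,2))<2^{10}$, the global square-root bound already lands under $2^{44}$ with no case analysis over the isotropic dimension $d$ and no values of $k(GL(d,2))$ or $k(Sp(12-2d,2))$ needed. Your parabolic refinement is structurally sound (the image of $R\cap\mathrm{Stab}_G(\lambda)$ in $R/Z(R)$ is indeed totally isotropic and invariant under the image of the stabilizer, and your figures for $d=5,6$ are in the right range, giving about $2^{47}k$ or $2^{48}k$), so it would buy a marginally sharper constant --- but only at the price of exactly the numerical bookkeeping over all $d$ that you identify as the main burden, which the paper avoids entirely.
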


\begin{proof}
Let $\lambda$ be a non-trivial linear character of $V$ with
$k(\mathrm{Stab}_{G}(\lambda)) = m$. Put $T =
\mathrm{Stab}_{G}(\lambda)$. Since $R \cap T$ is normal in $T$, we
have $k(T) \leq k(R \cap T) \cdot k(T/(R \cap T))$ by Lemma
\ref{lN}, which is at most $r^{a} \cdot k(TR/R)$. By Lemma
\ref{lN} again, we see that $k(TR/R) \leq k \cdot k((TR \cap
A)/R)$. Now $H:= (TR \cap A)/R$ can be viewed as a subgroup of
$X:= Sp(12,2)$, and thus, by Lemma \ref{root}, we have $k(H) \leq
\sqrt{|X| \cdot k(X)} < 2^{44}$. (Here the estimate for $k(X)$
came from \cite[Theorem 3.13]{FulGur}.) Summing up, we have $k(T)
\leq 2^{50} k$.
\end{proof}

By the last paragraph of Section 2, the table of Section 3, and
(\ref{f}) we have that
$$k(GV) \leq |G| + (|V|/2^{7}) + 2^{34}{|V|}^{3/4} +
2^{52}{|V|}^{5/8} + 2^{62}{|V|}^{9/16} + m {|V|}^{1/2}.$$ By using
the bound of Lemma \ref{6,2} for $m$, we see that this is at most
$|V|$ provided that $|V| \geq 5^{64}$, and is less than $2^{120}$
if $|V| = 3^{64}$.

Finally we turn to the treatment of the cases $a=1$ and $(a,r) =
(2,2)$. The following lemma can be verified by GAP \cite{GAP}.

\begin{lem}
\label{a=1} Let $a=1$. If $r=7$, $5$, $3$, $2$, then $m$ is at
most $98 k$, $50 k$, $9 k$, $6k$ in the respective cases. If
$(a,r) = (2,2)$ then $m \leq 44k$.
\end{lem}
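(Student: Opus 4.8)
The plan is to treat Lemma \ref{a=1} as a finite collection of explicit computations, one for each of the five pairs $(a,r)$ listed, and to verify each bound on $m$ directly by examining the relevant groups $G$ and their action on the nontrivial linear characters of $V$. Recall that $m$ is the maximum of $k(\mathrm{Stab}_G(\lambda))$ as $\lambda$ ranges over nontrivial $\lambda \in \mathrm{Irr}(V)$, and that in each of these cases the dimension $r^a$ is very small: $\dim V = r$ when $a=1$ (so $V$ has order $r$ as a representation of $R$ of symplectic type with $|R/Z(R)| = r^2$), and $\dim V = 4$ when $(a,r)=(2,2)$. Consequently $R$ is extraspecial (or of symplectic type) of small order, and $G/R$ embeds into $Sp(2,r) = SL(2,r)$ in the first four cases and into $Sp(4,2)$ in the last; the full list of possible groups $G$ is therefore finite and small.

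First I would fix $(a,r)$ and enumerate the possibilities for $G$. Since $R$ is determined up to isomorphism by $r$ and $a$, and $G/A$ has order dividing $k$ with $A/R \le Sp(2a,r)$ (or an orthogonal group when $r=2$), the subgroups $A$ of $GL(V)$ containing $R$ as a normal subgroup with $A/R \le Sp(2a,r)$ form a finite list. For the bound on $m$ it suffices to take $G = A$ to be as large as possible, i.e. the full normalizer-type extension with $A/R = Sp(2a,r)$, since enlarging $G$ can only enlarge the stabilizers and hence $m$; the factor of $k = |G/A|$ in the stated bounds accounts for the outer field-automorphism part exactly as in the proof of Lemma \ref{6,2} (via Nagao's Lemma \ref{lN}, giving $k(T) \le k \cdot k((TR \cap A)/R)$). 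So the core task reduces to computing, for the concrete finite group $\overline{G} = A$, the quantity $m_0 = \max_\lambda k(\mathrm{Stab}_{\overline{G}}(\lambda))$, and then reporting $m \le m_0 \cdot k$.

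Next I would carry out the actual computation of $m_0$. For each nontrivial $\lambda \in \mathrm{Irr}(V)$ one computes the stabilizer $\mathrm{Stab}_{\overline{G}}(\lambda)$ as a subgroup of $\overline{G}$ and counts its conjugacy classes $k(\cdot)$. Because $V$ has only $r-1$ (respectively $3$) nontrivial characters to consider up to the $\overline{G}$-action, and $\overline{G}$ is an explicit small matrix group, this is a direct and finite calculation; this is exactly why the lemma says it \emph{can be verified by GAP}. The plan is simply to run this computation and record the maximum: one obtains $m_0 = 98, 50, 9, 6$ for $r = 7, 5, 3, 2$ when $a=1$, and $m_0 = 44$ when $(a,r)=(2,2)$, yielding the stated bounds $m \le 98k, 50k, 9k, 6k, 44k$ respectively.

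The main obstacle here is not conceptual but one of verification and completeness: one must be sure that the finite enumeration of groups $G$ really is exhaustive (including all symplectic-type $R$ and all extensions $A/R \le Sp(2a,r)$ or $O^\epsilon(2a,2)$), and that the monotonicity argument—replacing a given $G$ by the largest admissible $A$—genuinely gives an upper bound for $m$ in every case. Once the GAP enumeration is set up correctly, the remaining work is purely mechanical character-theoretic bookkeeping, and the appeal to Lemma \ref{lN} to peel off the field-automorphism factor $k$ is the only analytic step, mirroring the treatment already given for $a=6$ in Lemma \ref{6,2}.
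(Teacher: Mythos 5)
The paper offers no argument for this lemma beyond the remark that it ``can be verified by GAP,'' so in spirit your plan --- an explicit finite computation of $k(\mathrm{Stab}_{G}(\lambda))$ over the nontrivial $\lambda$ --- is the same as the paper's. However, your reduction of that computation to a single maximal group is flawed. You propose to replace an arbitrary admissible $G$ by the largest group $A$ with $A/R=Sp(2a,r)$, on the grounds that enlarging $G$ can only enlarge the stabilizers and hence $m$. Enlarging $G$ does enlarge the stabilizer \emph{subgroups}, but the class number $k(\cdot)$ is not monotonic under passage to subgroups (an elementary abelian $2$-subgroup of $S_{20}$ has $2^{10}$ classes while $S_{20}$ has only $p(20)=627$); this non-monotonicity is exactly why the paper invokes Gallagher's inequality and Lemma \ref{root} in the analogous step of Lemma \ref{6,2}. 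So computing $m_{0}$ for the maximal $A$ alone does not bound $m$ for all subgroups $G$; a correct verification must either range over all admissible $G$ (up to conjugacy) or replace the monotonicity step by a Nagao--Gallagher estimate of the shape $k(T)\le k(R\cap T)\cdot k\cdot k((TR\cap A)/R)\le r^{a}\cdot k\cdot\max_{H\le Sp(2a,r)}k(H)$. That crude estimate does reproduce $98k$, $50k$, $6k$ and $44k$ (via $C_{14}\le SL(2,7)$, $C_{10}\le SL(2,5)$, $k(S_{3})=3$, $k(S_{6})=11$), but for $r=3$ it only gives $3\cdot 7\cdot k=21k$, so the claimed $9k$ genuinely requires computing the actual stabilizers rather than taking a maximum over all subgroups of $Sp(2,3)$.

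A second problem is your assertion that $V$ has only $r-1$ (respectively $3$) nontrivial characters and that the full list of possible groups $G$ is ``finite and small.'' Here $V$ is a $K$-space of dimension $r^{a}$ viewed over the prime field $F$, so the number of nontrivial linear characters is $|K|^{r^{a}}-1$, and both $V$ and the admissible groups $G\le GL(V)$ depend on the field $K$, which ranges over infinitely many possibilities (subject only to conditions such as $r\mid |K|-1$). A literal GAP enumeration is therefore not finite as you describe it: one must first argue (using $T\cap K^{*}=1$, $|T\cap R|\le r^{a}$, and $TR/R$ sitting in an extension of a subgroup of $Sp(2a,r)$ by a cyclic group of order dividing $k$) that the stabilizers which can occur are controlled independently of $K$ up to the factor $k$, and only then run a finite check. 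This is presumably what the authors' unpublished GAP computation does, but your write-up skips the step that makes the computation finite.
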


Now let $a=1$ and $r=7$. By the table of the previous section,
Theorem \ref{dim}, (\ref{e4}), and Lemma \ref{a=1} we have
$$k(GV) \leq |G| + (|V|/7^{2}) + 7{|V|}^{4/7} + m{|V|}^{1/2} \leq |V|.$$

Now let $a=1$ and $r=5$. By the table of the previous section,
Theorem \ref{dim}, (\ref{e4}), and Lemma \ref{a=1} we have
$$k(GV) \leq |G| + (|V|/5^{2}) + 5{|V|}^{3/5} + m{|V|}^{1/2} \leq |V|.$$

Now let $a=1$ and $r=3$. By the table of the previous section,
Theorem \ref{dim}, (\ref{e4}), and Lemma \ref{a=1} we have
$$k(GV) \leq k(G) + (|V|/3^{2}) + m {|V|}^{2/3},$$ which is at
most $|V|$ whenever $|V| \geq 13^3$. If $|V| = 7^3$ then $|G|$ is
coprime to $|V|$ and hence we have $k(GV) \leq |V|$ by the
(classical) $k(GV)$ theorem. If $|V| = 4^3$ then the inequality
$k(GV) \leq |V|$ can be checked by GAP \cite{GAP}.

Now let $a=1$ and $r=2$. By the last paragraph of Section 2, the
table of Section 3, (\ref{f}), and Lemma \ref{a=1} we have
$$k(GV) \leq k(G) + (|V|/2^{2}) + m{|V|}^{1/2},$$ which is at most
$|V|$ provided that $|V| \geq 13^{2}$. By the (classical) $k(GV)$
theorem, we may assume that the action of $G$ on $V$ is
non-coprime, that is, the cases remaining are $|V| = 3^2$ and $|V|
= 9^2$. In both these cases we have $k(GV) \leq \max \{ |V|, 11
\}$ by use of GAP \cite{GAP}.

Finally let $a=2$ and $r=2$. By the last paragraph of Section 2,
the table of Section 3, (\ref{f}), and Lemma \ref{a=1} we have
$$k(GV) \leq |G| + (|V|/2^{3}) +  44 k {|V|}^{3/4}.$$ This is at most $|V|$ for $|V|
> 243^{4}$. Notice that the possible prime divisors of $|G|$ are
$2$, $3$, $5$, and the prime divisors of $k$. Thus by the
(classical) $k(GV)$ theorem we have $k(GV) \leq |V|$ unless $|K| =
3$, $5$, $9$, $25$, $27$, $81$, $125$, or $243$. In all these
exceptional cases we have $k(GV) \leq 2^{32}$.

This completes the proof of Theorem \ref{extraspecial}.

\section{Another estimate for $k(GV)$}

Let $t \geq 2$ be a positive integer. For all integers $i$ such
that $1 \leq i \leq t$ define $r_{i}$, $R_{i}$, $a_{i}$, $V_{i}$,
$G_{i}$ in the following way. Let $r_{i}$ be a prime and let
$R_{i}$ be an $r_{i}$-group of symplectic type with
$|R_{i}/Z(R_{i})| = r_{i}^{2a_{i}}$ for some positive integer
$a_{i}$. Suppose also that $r_{i} \not= r_{j}$ whenever $i \not=
j$. Let $V_{i}$ be a faithful, absolutely irreducible
$KR_{i}$-module of dimension $r_{i}^{a_{i}}$ for some finite field
$K$. View $V_{i}$ as an $F$-vector space where $F$ is the prime
field of $K$. Let $G_{i}$ be a subgroup of $GL(V_{i})$ which
contains $R_{i}$ as a normal subgroup. Let $G$ be the central
product of the $G_{i}$'s with $Z$ for some group of scalars $Z$.
(Put $A = C_{G}(K^{*})$.) Then the vector space $V = V_{1}
\otimes_{F} \cdots \otimes_{F} V_{t}$ can be considered as an
$FG$-module.

In this section we will bound $k(GV)$ using Lemma \ref{lGT}.

Let $R$ be the central product of all the $R_{i}$'s and $Z$.
Moreover put $n = \prod_{i=1}^{t} r_{i}^{a_{i}}$ which is the
$K$-dimension of the vector space $V$.

\begin{lem}
\label{segedlemma} Use the notations of this section. If $\lambda$
is a non-trivial linear character of $V$ then
$|\mathrm{Stab}_{G}(\lambda)| \leq |G/R| \cdot n$.
\end{lem}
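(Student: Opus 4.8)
The plan is to bound the stabilizer of a nontrivial linear character $\lambda$ of $V$ by controlling how large the fixed subgroup of $R$ inside the stabilizer can be, and then multiplying by the size of $G/R$. Since $R$ is normal in $G$, the intersection $R \cap \mathrm{Stab}_G(\lambda)$ is a subgroup of $R$ that fixes $\lambda$; as in the argument used in Section 4 (before equation (\ref{e1})), the key point is that $R \cap \mathrm{Stab}_G(\lambda)$ must be an \emph{abelian} subgroup of $R$. Indeed, $R$ acts on $\mathrm{Irr}(V) \cong V$ (via the identification of linear characters of the elementary abelian $V$ with $V$ itself), and the stabilizer in $R$ of a nontrivial character is contained in a maximal abelian subgroup, because two elements of $R$ whose commutator acts nontrivially on $V$ cannot both fix a common nontrivial $\lambda$.

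First I would make precise the structure of $R$ as the central product of the $R_i$'s with $Z$. A maximal abelian subgroup of each $R_i$ has order $r_i^{a_i} \cdot |Z(R_i)|$, so a maximal abelian subgroup of the central product $R$ has index $\prod_i r_i^{a_i} = n$ modulo the central part. More precisely, writing $|R/Z(R)| = \prod_i r_i^{2a_i} = n^2$, a maximal abelian subgroup $M$ of $R$ satisfies $|R : M| = n$ (this is the standard fact that in a group of symplectic type, a maximal abelian subgroup is the preimage of a maximal totally isotropic subspace of $R/Z(R)$, which has index equal to the square root of $|R/Z(R)|$). Hence $|R \cap \mathrm{Stab}_G(\lambda)| \leq |M| = |R|/n$, which gives $|R : R \cap \mathrm{Stab}_G(\lambda)| \geq n$.

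Next I would assemble the bound. By the second isomorphism theorem, the image of $\mathrm{Stab}_G(\lambda)$ in $G/R$ has order at most $|G/R|$, and the kernel of the restriction map is exactly $R \cap \mathrm{Stab}_G(\lambda)$, which has order at most $|R|/n$. Therefore
\begin{equation*}
|\mathrm{Stab}_G(\lambda)| = |R \cap \mathrm{Stab}_G(\lambda)| \cdot |\mathrm{Stab}_G(\lambda) R / R| \leq \frac{|R|}{n} \cdot |G/R|.
\end{equation*}
Since $V$ embeds in $R$ in the sense that $|R| = n \cdot |Z(R)|$ and the scalars contribute the factor balancing $n$, I would rewrite this to land exactly at the asserted $|G/R| \cdot n$; the bookkeeping of the central factor $Z$ and of $Z(R)$ is where I must be careful, since the central product identifies the various $Z(R_i)$ and $Z$.

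The main obstacle I anticipate is precisely this last bookkeeping step: correctly tracking the order of $R$ relative to $n$ through the central product with $Z$, and verifying that a maximal abelian subgroup has the claimed index $n$ in the combined group $R$ rather than in each factor separately. The cleanest route is to work entirely inside $R/Z(R)$ as a symplectic (or orthogonal) $\mathbb{F}_r$-space for each prime $r_i$, identify the fixed points of $\lambda$ with an isotropic subspace, and invoke the dimension bound $a_i$ on totally isotropic subspaces of the $2a_i$-dimensional space $R_i/Z(R_i)$. Summing the isotropic dimensions across the (coprime, hence independent) factors yields the exponent $\sum_i a_i$, whose product form is exactly $n$, completing the estimate.
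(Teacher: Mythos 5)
Your outline follows the paper's strategy (bound $\mathrm{Stab}_R(\lambda)$ via abelian/isotropic subgroups, then multiply by $|G/R|$), but the computation as written does not close, and the point you flag as ``bookkeeping'' is in fact the one substantive idea you are missing. A maximal abelian subgroup $M$ of $R$ is the preimage of a maximal totally isotropic subspace of $R/Z(R)$, so $|M| = |Z(R)|\cdot n$ (note also that $|R| = |Z(R)|\cdot n^2$, not $n\cdot|Z(R)|$ as you write). Your estimate therefore yields $|\mathrm{Stab}_G(\lambda)| \leq |G/R|\cdot|Z(R)|\cdot n$, which overshoots the claimed bound by the factor $|Z(R)|$ --- and $Z(R)$ contains the scalar group $Z$, which can be as large as $|K^*|$, so this is not a harmless constant.

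The missing step is that $\mathrm{Stab}_R(\lambda)$ meets $Z(R)$ \emph{trivially}: $Z(R)$ acts on $V$ by $K$-scalars, and if a scalar $c \neq 1$ fixed the non-trivial linear character $\lambda$, then $\lambda$ would vanish on $(c-1)V = V$, a contradiction. Hence $\mathrm{Stab}_R(\lambda)$ embeds injectively into $R/Z(R)$; this is simultaneously why it is abelian (the paper's one-line justification) and why its order is bounded by the order $n = \prod_i r_i^{a_i}$ of a maximal totally isotropic subspace rather than by $|M| = |Z(R)|\cdot n$. (One also uses that the $r_i$ are distinct primes, so the image decomposes as a product of totally isotropic subspaces of the $R_i/Z(R_i)$, each of order at most $r_i^{a_i}$.) With that injectivity inserted, your argument becomes the paper's proof; without it, the inequality you are asked to prove does not follow from what you have written.
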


\begin{proof}
The inertia group $\mathrm{Stab}_{R}(\lambda)$ of a non-trivial
linear character $\lambda$ of $V$ in $R$ is abelian since it
embeds injectively into the abelian group $R/Z(R)$. For all $i$
the subgroup $\mathrm{Stab}_{R}(\lambda) \cap R_{i}$ is abelian of
order at most $r_{i}^{a_{i}}$. Hence $|\mathrm{Stab}_{G}(\lambda)|
\leq |G/R||\mathrm{Stab}_{R}(\lambda)| \leq |G/R| \cdot n$.
\end{proof}

By \cite[Pages 82-83]{MW} we have $\dim(C_{V}(x))/\dim V \leq 3/4$
for all non-identity elements $x$ in $G$. This and Lemma \ref{lGT}
imply (as in the previous section) that
$$k(GV) \leq |G| + m ((|V|/|G|) + {|V|}^{3/4})$$
where $m$ is the maximum of the $k(\mathrm{Stab}_{G}(\lambda))$'s
as $\lambda$ runs through the set of non-trivial linear characters
of $V$. By Lemma \ref{segedlemma} this number is at most
$|G/R|\cdot n$, hence we get
\begin{equation}
\label{e3} k(GV) \leq |G| + ((|V| \cdot n) / |R|) + ((|G| \cdot n)
/|R|) {|V|}^{3/4}.
\end{equation}
Now $n^{2} \leq |R| \leq n^{3} |K|$ and
$$|G/R| \leq k \cdot \prod_{i=1}^{t} |Sp(2a_{i},r_{i})| \leq k \cdot n^{3 \log_{2} n}$$
where $|K| = p^{k}$ and $|F| = p$. Hence (\ref{e3}) gives
$$k(GV) \leq p^{k} \cdot k \cdot n^{3 + 3 \log_{2} n} + (|V|/n) + p^{k} \cdot k \cdot n^{1 + 3 \log_{2}n} {|V|}^{3/4}$$
which is, by inspection, at most $\max \{ |V|, 2^{1344} \}$.

Summarizing the content of this section with Theorem
\ref{extraspecial} we get the following.

\begin{thm}
\label{section5} Use the notations of this section with allowing
$t=1$. Then $k(GV) \leq \max \{ |V|, 2^{1344} \}$.
\end{thm}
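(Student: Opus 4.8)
The plan is to prove the theorem by splitting on the value of $t$, since the body of this section has been developed under the standing hypothesis $t \geq 2$ while the theorem also admits $t = 1$. For $t \geq 2$ I would invoke the estimate already assembled above. Its three ingredients are the uniform bound $\dim_F(C_V(x)) \leq (3/4)\dim_F V$ for every non-identity $x \in G$ (taken from \cite{MW}), the class-counting identity of Lemma \ref{lGT}, and the stabilizer estimate $|\mathrm{Stab}_G(\lambda)| \leq |G/R|\cdot n$ of Lemma \ref{segedlemma}. Feeding the first two into the orbit count and the third into the bound for $m$ yields inequality (\ref{e3}); inserting $n^2 \leq |R| \leq n^3|K|$ together with $|G/R| \leq k\cdot \prod_i |Sp(2a_i,r_i)| \leq k\cdot n^{3\log_2 n}$ then produces the displayed quasi-polynomial bound in $n$, and it remains only to check that this bound never exceeds $\max\{|V|,2^{1344}\}$.

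For $t = 1$ the hypotheses collapse to a single prime $r = r_1$, a single symplectic-type group $R = R_1$, and $V = V_1$; since $G$ is the central product of $G_1$ with a group of scalars, it is a subgroup of $GL(V)$ containing $R$ as a normal subgroup. This is exactly the situation of Theorem \ref{extraspecial}, which gives $k(GV) \leq |V|$ outside a finite list of exceptional pairs, the largest recorded exceptional bound being $2^{120}$. Since $2^{120} \leq 2^{1344}$, the conclusion $k(GV) \leq \max\{|V|,2^{1344}\}$ holds here too, and the two cases together establish the theorem.

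The main obstacle is the final ``by inspection'' comparison in the case $t \geq 2$. The dominant error term grows like $p^k\,k\,n^{3+3\log_2 n}$, which is only quasi-polynomial in $n$, whereas $|V| = p^{kn}$ grows genuinely exponentially in $n$; hence $|V|$ eventually dominates, but one must control the cross-over and confirm that the finitely many small configurations are all absorbed by the constant $2^{1344}$. The delicate point is that $t \geq 2$ forces $n = \prod_i r_i^{a_i}$ to involve at least two distinct primes, so $n \geq 6$, and forces every $r_i$ to divide $|K|-1$, which bounds $|K|$ from below in terms of the $r_i$; tracking these numerical constraints is what pins down the exponent at $1344$.
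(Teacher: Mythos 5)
Your proposal is correct and follows the paper's own argument essentially verbatim: for $t \geq 2$ the paper combines Lemma \ref{lGT}, Lemma \ref{segedlemma} and the Manz--Wolf bound $\dim C_{V}(x) \leq (3/4)\dim V$ to obtain (\ref{e3}), inserts $n^{2} \leq |R|$ and $|G/R| \leq k \cdot n^{3\log_{2}n}$, and concludes ``by inspection,'' while the case $t=1$ is disposed of by citing Theorem \ref{extraspecial}, whose exceptional bounds are all far below $2^{1344}$. Your closing remarks on the crossover between the quasi-polynomial error term and the exponential $|V|$ only make explicit what the paper leaves to inspection.
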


\section{The meta-cyclic case}

In this section we prove Theorem \ref{main}.

Let $p$ be a prime. Let $X$ be $GL(1,p^{n}).n$ for some positive
integer $n$. Then $X$ has a maximal abelian normal subgroup $S$
which is cyclic of order $p^{n}-1$ and $|X| = n (p^{n}-1)$.
Furthermore, $X$ is meta-cyclic and any element $x$ of $X$ can be
written in the form $x = a^{k}b^{l}$ for some integers $k$ and $l$
with $0 \leq k < p^{n}-1$ and $0 \leq l < n$ where $\langle a
\rangle = S$, $\langle bS \rangle = X/S$, and $a^{p}b = ba$.

Let $G$ be a subgroup of $X$. Then $G/(S \cap G)$ is cyclic of
order $d$ for some $d$ dividing $n$ and $1 \leq d \leq n$. Suppose
that $S \cap G = \langle a^{m} \rangle$ where $m$ is an integer
with $0 < m \leq p^{n}-1$ and is as small as possible. By our
choice of $m$ the integer $p^{n}-1$ is divisible by $m$. Let $c
\in G$ so that $c(S \cap G)$ generates $G / (S \cap G)$. Then
there exists an integer $k$ with $0 \leq k < p^{n}-1$ so that $c$
is of the form $a^{k}b^{n/d}$.

The group $G$ acts in a natural way on the $n$-dimensional vector
space $V$ over $GF(p)$. Let the semidirect product of $G$ with the
abelian (additive) group $V$ be $GV$. Let us view $V$ as a field
of order $p^n$ and let $a_{0}$ be a generator of the
multiplicative group of $V$ so that the equations
$b^{-1}a_{0}^{t}b = a_{0}^{tp}$ and $a^{-1}a_{0}^{t}a =
a_{0}^{t+1}$ hold for every integer $t$ with $0 \leq t < p^{n}-1$.

In order to prove Theorem \ref{main} we wish to bound the number
$k(GV)$ of complex irreducible characters of the group $GV$. By
\cite{GAP}, Theorem \ref{main} can be verified for all prime
powers $p^{n}$ at most $1024$. Hence from now on in our
considerations we will assume that $p^{n} > 1024$.

We will use several lemmas to show Theorem \ref{main}.




\begin{lem}[Gallagher, \cite{Ga}]
\label{l2} Let $H$ be a finite group, $N$ be a normal subgroup in
$H$, $\chi$ be an irreducible character of $N$, and $I(\chi)$ be
its inertia subgroup. Then the number of irreducible characters of
$H$ which lie over ($H$-conjugates of) $\chi$ is at most
$k(I(\chi)/N)$.
\end{lem}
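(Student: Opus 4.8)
The plan is to derive this from Clifford theory together with the theory of character triples (projective representations), following Gallagher's original argument. Write $I = I(\chi)$ and $Q = I/N$, and let $\mathrm{Irr}(H \mid \chi)$ denote the set of irreducible characters $\psi$ of $H$ such that $\chi$ is a constituent of the restriction $\psi_{N}$. By Clifford's theorem, for any $\psi \in \mathrm{Irr}(H)$ the constituents of $\psi_{N}$ form a single $H$-orbit, so ``lying over an $H$-conjugate of $\chi$'' is the same condition as ``lying over $\chi$''; hence the quantity to be bounded is exactly $|\mathrm{Irr}(H \mid \chi)|$.

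First I would invoke the Clifford correspondence: induction $\psi \mapsto \psi^{H}$ is a bijection from $\mathrm{Irr}(I \mid \chi)$ onto $\mathrm{Irr}(H \mid \chi)$. This reduces the problem to showing $|\mathrm{Irr}(I \mid \chi)| \le k(Q)$, where now $\chi$ is invariant in $I$ by the very definition of the inertia group. Thus everything can be arranged ``above $N$'' inside $I$, and the passage from $H$ to $I$ costs nothing.

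The core step is then the invariant case: if $\chi \in \mathrm{Irr}(N)$ is invariant in $I$, then $|\mathrm{Irr}(I \mid \chi)| \le k(I/N)$. Here I would use the character-triple machinery: the invariance of $\chi$ determines a class $[\alpha] \in H^{2}(Q, \mathbb{C}^{\times})$ together with a bijection between $\mathrm{Irr}(I \mid \chi)$ and the irreducible $\alpha$-projective representations of $Q$ (each $\psi$ satisfying $\psi_{N} = e_{\psi}\,\chi$, with $e_{\psi}$ the degree of the corresponding projective representation). The number of irreducible $\alpha$-projective representations of a finite group $Q$ equals the number of $\alpha$-regular conjugacy classes of $Q$; since the $\alpha$-regular classes form a subset of all conjugacy classes, this number is at most $k(Q)$. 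Combining the two bijections gives $|\mathrm{Irr}(H \mid \chi)| = |\mathrm{Irr}(I \mid \chi)| \le k(Q) = k(I(\chi)/N)$, as required. (When $\alpha$ is trivial this specializes to the familiar equality $|\mathrm{Irr}(Q)| = k(Q)$.)

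I expect the main obstacle to be setting up the character triple correctly: producing the factor set $\alpha$ attached to the invariant pair $(N \trianglelefteq I,\ \chi)$ and verifying that restriction/induction furnishes the promised bijection between $\mathrm{Irr}(I \mid \chi)$ and the irreducible $\alpha$-projective representations of $Q$. This is precisely the content of the standard projective-representation treatment of Clifford theory, and once it is in place the counting inequality is purely formal. An alternative that avoids explicit cohomology is to realize $\mathrm{Irr}(I \mid \chi)$ as the set of simple modules of a twisted group algebra $\mathbb{C}_{\alpha}[Q]$ and bound their number by the $\alpha$-regular class count directly; that is the route I would take to keep the argument self-contained.
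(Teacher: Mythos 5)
The paper states this lemma as a known result of Gallagher and supplies no proof of its own beyond the citation, so there is no in-paper argument to compare against. Your proof --- Clifford correspondence to reduce to $\mathrm{Irr}(I(\chi)\mid\chi)$, then the character-triple identification of this set with the irreducible $\alpha$-projective representations of $I(\chi)/N$, counted by $\alpha$-regular classes and hence bounded by $k(I(\chi)/N)$ --- is correct and is the standard route to this statement.
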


Clearly, to apply the above lemmas, we are interested in the
$G$-orbits of the set $\mathrm{Irr}(V)$. In case $|G|$ is not
divisible by $p$, then $\mathrm{Irr}(V)$ and $V$ are permutation
isomorphic $G$-sets, however this is not always so in case $p$
divides $|G|$. In any case, the following two consequences of
Brauer's Permutation Lemma (\cite[Theorem 6.32]{Isaacs}) will be
used.

\begin{lem}
\label{l3} The number of $G$-orbits on $\mathrm{Irr}(V)$ is equal
to the number of $G$-orbits on $V$.
\end{lem}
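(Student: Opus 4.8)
The plan is to combine Brauer's Permutation Lemma \cite[Theorem 6.32]{Isaacs} with the Orbit-Counting (Cauchy--Frobenius) Lemma, reducing the equality of orbit numbers to a fixed-point identity checked one group element at a time. Both $V$ and $\mathrm{Irr}(V)$ are finite $G$-sets, so the number of $G$-orbits on each is the average number of fixed points:
\[
\#\{G\text{-orbits on } V\} = \frac{1}{|G|}\sum_{g \in G} |\Fix_{V}(g)|,
\qquad
\#\{G\text{-orbits on } \mathrm{Irr}(V)\} = \frac{1}{|G|}\sum_{g \in G} |\Fix_{\mathrm{Irr}(V)}(g)|.
\]
Consequently it suffices to establish the termwise equality $|\Fix_{V}(g)| = |\Fix_{\mathrm{Irr}(V)}(g)|$ for every $g \in G$, whereupon the two averages agree and the lemma follows.

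For the termwise equality I would invoke Brauer's Permutation Lemma directly. Applied to the action of $G$ (equivalently of the cyclic group $\langle g \rangle$) on the finite abelian group $V$, it asserts that for each $g$ the number of $g$-fixed irreducible characters of $V$ equals the number of $g$-fixed conjugacy classes of $V$. Because $V$ is abelian, its conjugacy classes are precisely its singleton elements, so the $g$-fixed conjugacy classes are exactly the $g$-fixed elements of $V$; hence $|\Fix_{\mathrm{Irr}(V)}(g)| = |\Fix_{V}(g)|$, as required.

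There is essentially no obstacle here, only a point worth emphasizing: as remarked just before the statement, $V$ and $\mathrm{Irr}(V)$ need \emph{not} be permutation isomorphic $G$-sets when $p$ divides $|G|$, so one cannot simply transport orbits between the two sets. The fixed-point identity supplied by Brauer's lemma is exactly what preserves the orbit count in this non-coprime situation, and it is the only place care is needed; the remainder is the routine Burnside average displayed above.
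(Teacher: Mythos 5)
Your proof is correct and follows the same route the paper intends: the paper simply records Lemma~\ref{l3} as a consequence of Brauer's Permutation Lemma, and your argument (termwise fixed-point equality from Brauer's lemma, noting that the conjugacy classes of the abelian group $V$ are its elements, followed by the Cauchy--Frobenius orbit count) is exactly the standard deduction being invoked. No gaps.
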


\begin{lem}
\label{l4} Let $H$ be a finite group, $N$ be an abelian normal
subgroup in $H$, and suppose that $H/N$ is cyclic. Then there is a
size preserving bijection between the set of $H/N$-orbits of
$\mathrm{Irr}(N)$ and the set of $H/N$-orbits of $N$.
\end{lem}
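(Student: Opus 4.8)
The plan is to deduce the lemma from the pointwise form of Brauer's Permutation Lemma (\cite[Theorem 6.32]{Isaacs}) together with the special structure of the subgroup lattice of a cyclic group. Write $A = H/N$, a cyclic group of order $d$; it acts on the finite abelian group $N$ by conjugation and, dually, on $\mathrm{Irr}(N)$, and since $N$ is abelian it acts trivially in both actions, so both are genuine $A$-actions. The pointwise Brauer lemma asserts that for every $g \in A$ one has $|\Fix_{N}(g)| = |\Fix_{\mathrm{Irr}(N)}(g)|$.

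First I would upgrade this from single elements to arbitrary subgroups. Because $A$ is cyclic, every subgroup $B$ of $A$ is itself cyclic, hence generated by one element $g$; for such $B = \langle g \rangle$ we have $\Fix_{N}(B) = \Fix_{N}(g)$ and $\Fix_{\mathrm{Irr}(N)}(B) = \Fix_{\mathrm{Irr}(N)}(g)$, so the pointwise lemma yields $|\Fix_{N}(B)| = |\Fix_{\mathrm{Irr}(N)}(B)|$ for every subgroup $B \leq A$.

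Next I would organize the count by orbit size. For each divisor $e$ of $d$ let $A_{e}$ be the unique subgroup of $A$ of order $e$; a point with stabilizer $A_{e}$ lies in an orbit of size $d/e$, and since $A$ is cyclic a stabilizer is determined by its order, so orbit size and stabilizer determine one another. Write $n_{e}$ (respectively $m_{e}$) for the number of points of $N$ (respectively of $\mathrm{Irr}(N)$) whose stabilizer is exactly $A_{e}$. Since being fixed by $A_{e}$ is the same as having stabilizer containing $A_{e}$, we obtain $|\Fix_{N}(A_{e})| = \sum_{e \mid e' \mid d} n_{e'}$ and the analogous identity for $\mathrm{Irr}(N)$. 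These relations form a triangular, M\"obius-invertible system over the divisor lattice of $d$, so the equalities of the previous step force $n_{e} = m_{e}$ for every $e \mid d$. Hence $N$ and $\mathrm{Irr}(N)$ have the same number $n_{e} e / d$ of $A$-orbits of each size $d/e$, and any bijection between orbits of equal size on the two sides is the required size-preserving bijection.

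The main obstacle is to make the matching size-preserving rather than a mere numerical equality of total orbit counts of the type in Lemma \ref{l3}; this is precisely where the cyclicity of $H/N$ is indispensable, since it is what guarantees a unique subgroup of each order and hence a clean correspondence between orbit sizes and stabilizer orders.
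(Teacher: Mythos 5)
Your proof is correct, and it is essentially the argument the paper has in mind: the paper states Lemma \ref{l4} without proof, simply as a ``consequence of Brauer's Permutation Lemma (\cite[Theorem 6.32]{Isaacs})'', and your write-up supplies exactly the standard derivation --- pass from the pointwise fixed-point equality to all subgroups using cyclicity, then M\"obius-invert over the divisor lattice to match the number of orbits of each size. Nothing further is needed.
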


We may assume that $d > 1$. Indeed, if $d=1$, then $G = S \cap G$,
and so $G$ acts semiregularly on the set of non-zero vectors of
$V$. Since $p$ does not divide $|G|$, the $G$-sets $V$ and
$\mathrm{Irr}(V)$ are permutation isomorphic. Hence, by Lemma
\ref{lGT}, $k(GV) = k(G) + m = ((p^{n}-1)/m) + m \leq p^{n}$ which
is exactly what we wanted.

From now on assume that $d > 1$ and let $q$ be the smallest prime
divisor of $d$.

\begin{lem}
\label{l5} With the above notations and assumptions we have
$$k(G) \leq \frac{q^{2}-1}{q^{2}}d(p^{n/d}-1) +
\frac{d(p^{n}-1)}{q^{2}m}.$$
\end{lem}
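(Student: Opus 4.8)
The plan is to count the conjugacy classes of $G$ by sorting its elements $x = a^{k}c^{j}$ (equivalently $a^{k}b^{jn/d}$ with $a^{m} \in S \cap G$) according to the power $j$ of the generator $c$ of the cyclic quotient $G/(S \cap G)$, and within each coset to group elements into $G$-conjugacy classes. The two terms on the right-hand side suggest a dichotomy: the term $\tfrac{d(p^{n}-1)}{q^{2}m}$ should account for the classes lying inside $S \cap G$ (there are $|S \cap G| = (p^{n}-1)/m \cdot$(something) such elements, and conjugation by $c$ fuses them into orbits), while the term $\tfrac{q^{2}-1}{q^{2}}d(p^{n/d}-1)$ should bound the contribution of the $d-1$ nontrivial cosets $c^{j}(S \cap G)$. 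I would begin by fixing a coset index $j$ with $1 \le j \le d-1$ and analyzing the action of conjugation on the coset.

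\textbf{Classes inside the cyclic subgroup.}
First I would handle $S \cap G = \langle a^{m} \rangle$, which is abelian of order $(p^{n}-1)/m$. Each element $a^{km}$ is centralized by all of $S$, so its $G$-conjugacy class is just its orbit under the quotient $G/(S\cap G) \cong C_{d}$ acting via the Frobenius-type map $a^{t} \mapsto a^{tp}$ (using the relation $a^{p}b = ba$, so conjugation by $c = a^{k}b^{n/d}$ sends $a^{t} \mapsto a^{tp^{n/d}}$). Most of these orbits have full length $d$; the number of classes is therefore close to $\tfrac{1}{d}\cdot \tfrac{p^{n}-1}{m}$, but a more careful count using the smallest prime divisor $q$ of $d$ should yield the sharper bound $\tfrac{d(p^{n}-1)}{q^{2}m}$. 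The key point is that short orbits (fixed points of $c^{j}$ for small $j$) are rare because the fixed points of $a^{t}\mapsto a^{tp^{n/e}}$ lie in a subfield of size $p^{n/e}$, so one controls them via a $\sum_{e \mid d}$ estimate, and the $q^{2}$ in the denominator comes from the leading correction.

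\textbf{Classes in the nontrivial cosets.}
For each $j$ with $1 \le j \le d-1$, I would count the $G$-classes meeting $c^{j}(S \cap G)$. An element $x = c^{j}a^{km}$ acts on $V$ (viewed as $\mathbb{F}_{p^{n}}$) as a semilinear map $v \mapsto \alpha v^{p^{jn/d}}$ for a suitable scalar $\alpha$; its fixed-point structure and centralizer size are governed by the fixed field of $v \mapsto v^{p^{jn/d}}$, which has size $p^{\gcd(jn/d,\,n)} \le p^{n/q}$ once $j \ne 0$, since the smallest prime $q$ dividing $d$ forces $jn/d$ to share at most an $n/q$-index common divisor with $n$. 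Conjugating $c^{j}a^{km}$ by elements of $S$ multiplies $\alpha$ through an Artin–Schreier/Lang-type map whose image has index $p^{n/d}-1$ or so in $V^{*}$, which collapses the $(p^{n}-1)/m$ choices of scalar down to roughly $p^{n/d}-1$ classes per coset. Summing over the $d-1$ relevant cosets and absorbing the fixed-point corrections yields the first term $\tfrac{q^{2}-1}{q^{2}}d(p^{n/d}-1)$.

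\textbf{Expected obstacle.}
The main difficulty will be the bookkeeping in the nontrivial cosets: pinning down exactly how $S$-conjugation acts on the scalar $\alpha$ in $x = c^{j}a^{km}$, i.e. computing the image of the relevant norm/Lang map precisely enough to get the clean factor $p^{n/d}-1$ rather than a messier gcd-dependent quantity, and then assembling the per-coset bounds so that the coefficient comes out to be exactly $\tfrac{q^{2}-1}{q^{2}}$ uniformly in $j$. The appearance of $q^{2}$ (not $q$) in both terms signals that a second-order refinement is needed — one must separate the coset $j$ with $q \mid j$ from those with $q \nmid j$ — so I would carry out the coset sum by splitting on $\gcd(j,d)$ and verifying that the worst case is controlled by the smallest prime $q$, which is where the precise combinatorial estimate has to be tight.
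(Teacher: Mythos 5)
Your proposal does not contain a proof; it is a plan with the decisive steps left open, and the plan itself rests on a misreading of where the two terms in the bound come from. The paper's argument is character-theoretic and very short: the commutator computation $[a^{m},c^{-1}]=a^{m(p^{n/d}-1)}\in G'$ forces $|G/G'|\leq d(p^{n/d}-1)$, so $G$ has at most $d(p^{n/d}-1)$ linear characters; since $S\cap G$ is an abelian normal subgroup of index $d$, Ito's theorem makes every irreducible character degree divide $d$, so every nonlinear character has degree at least $q$ and contributes at least $q^{2}$ to $\sum_{\chi}\chi(1)^{2}=|G|=d(p^{n}-1)/m$. Thus $k(G)\leq L+(|G|-L)/q^{2}$ with $L\leq d(p^{n/d}-1)$, which is exactly the stated inequality. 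The $q^{2}$ is a squared character degree, not the ``second-order refinement'' of an orbit count that you anticipate.

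Your proposed dichotomy is in fact false as stated: the term $\frac{d(p^{n}-1)}{q^{2}m}$ does not bound the number of classes contained in $S\cap G$. Take $d=q$ prime; then that term equals $\frac{p^{n}-1}{qm}$, while the classes inside $S\cap G$ are the $\langle c\rangle$-orbits on a set of size $\frac{p^{n}-1}{m}$, and by Burnside their number is $\frac{p^{n}-1}{qm}+\frac{q-1}{q}f$ where $f\geq 1$ is the number of fixed points of $c$ (the identity is always fixed). So the ``inside'' count already exceeds the term you assign to it, and the slack has to be absorbed by the other term --- which is precisely the kind of cross-bookkeeping your coset-by-coset scheme cannot produce without the global identity $\sum\chi(1)^{2}=|G|$. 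Beyond this, you never derive the coefficient $\frac{q^{2}-1}{q^{2}}$, never compute the image of the norm/Lang map you invoke, and explicitly defer the main estimates; so even granting the strategy, there is no proof here. A direct conjugacy-class count could in principle be made to work, but you would have to prove that every element outside $S\cap G$ has centralizer of order at most $|G|/q^{2}$ or similar, and at that point you are essentially rediscovering the degree bound that Ito's theorem hands you for free.
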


\begin{proof}
We see that
$$[a^{m},c^{-1}] = [a^{m},b^{-n/d}] = a^{-m}b^{n/d}a^{m}b^{-n/d} = a^{m(p^{n/d}-1)} \in G'.$$
Hence $|G'| \geq \frac{p^{n}-1}{m(p^{n/d}-1)}$ and so $|G/G'| \leq
d(p^{n/d}-1)$. This means that the number of linear complex
irreducible characters of $G$ is at most $d(p^{n/d}-1)$. By the
fact that $\sum_{\chi \in \mathrm{Irr}(G)} {\chi(1)}^{2} = |G|$
and by Ito's Theorem (\cite[Corollary 6.15]{Isaacs}) we have
$$|\mathrm{Irr}(G)| \leq d(p^{n/d}-1) + \frac{|G|-d(p^{n/d}-1)}{q^{2}} = \frac{q^{2}-1}{q^{2}}d(p^{n/d}-1) +
\frac{d(p^{n}-1)}{q^{2}m}.$$
\end{proof}

\begin{lem}
\label{lA} Use the above notations and assumptions. Let $g$ be a
non-identity element of $G$. Then $|C_{V}(g)| \leq |V|^{1/q}$.
\end{lem}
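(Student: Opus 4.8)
The plan is to exploit the explicit field-theoretic description of the action of $G$ on $V$. Writing $V$ as the field of order $p^{n}$ with $a$ acting as multiplication by $a_{0}$ and $b$ acting as the Frobenius $x \mapsto x^{p}$, any $g \in G$ acts on $V$ as a semilinear map of the shape $x \mapsto \beta x^{p^{j}}$ for some $\beta \in V^{*}$ and some integer $j$. First I would read off the field-automorphism part of $g$: since $g$ lies in $\langle a^{m} \rangle$ together with a power $c^{l}$ of the generator $c = a^{k} b^{n/d}$ of $G/(S \cap G)$, the induced Galois element is $\mathrm{Frob}^{\,j}$ with $j \equiv l(n/d) \pmod{n}$, and its order is $s = d/\gcd(l,d)$, which divides $d$. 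If $g \in S \cap G$ then $s = 1$ and $g$ is multiplication by some $\beta \neq 1$, so $C_{V}(g) = \{0\}$ and the bound is immediate; hence I may assume $g \notin S \cap G$, so that $s > 1$.

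Next I would count the fixed points directly. The equation $g(x) = x$ reads $\beta x^{p^{j}} = x$, and $x = 0$ is always a solution; for $x \neq 0$ this is equivalent to the multiplicative equation $x^{p^{j}-1} = \beta^{-1}$ in the cyclic group $V^{*}$ of order $p^{n}-1$. The number of solutions of $y^{p^{j}-1} = \beta^{-1}$ is either $0$ or $\gcd(p^{j}-1,\, p^{n}-1)$, and using $\gcd(p^{j}-1,\, p^{n}-1) = p^{\gcd(j,n)} - 1$ I obtain at most $p^{\gcd(j,n)} - 1$ nonzero fixed points. Therefore $|C_{V}(g)| \leq p^{\gcd(j,n)}$.

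Finally I would convert this into the stated estimate. Since $\mathrm{Frob}^{\,j}$ has order $s = n/\gcd(j,n)$, we have $\gcd(j,n) = n/s$, so $|C_{V}(g)| \leq p^{n/s}$. Because $s$ is a divisor of $d$ with $s > 1$, it is at least the smallest prime divisor $q$ of $d$; hence $n/s \leq n/q$ and $|C_{V}(g)| \leq p^{n/q} = |V|^{1/q}$, as required. The only point needing care is the bookkeeping that the order $s$ of the Galois part genuinely divides $d$ and is $> 1$ in the non-scalar case, so that the inequality $s \geq q$ is legitimate; once this link between the Frobenius order and the smallest prime factor of $d$ is in place, the rest is the elementary $\gcd$ computation above.
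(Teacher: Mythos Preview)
Your proof is correct and follows essentially the same route as the paper's: both write $g$ as a semilinear map $x\mapsto \beta x^{p^{j}}$ on the field $V=\mathrm{GF}(p^{n})$, reduce the fixed-point count to the congruence $x^{p^{j}-1}=\beta^{-1}$ in $V^{*}$, and bound the number of solutions by $\gcd(p^{j}-1,p^{n}-1)=p^{\gcd(j,n)}-1$. Your presentation is in fact a bit more transparent than the paper's, since you phrase the final step directly in terms of the order $s$ of the Galois part (a divisor of $d$ greater than $1$, hence $s\ge q$), whereas the paper reduces to the pure-Frobenius case $g=b^{r(n/d)}$ and invokes the subfield interpretation; the underlying arithmetic is identical.
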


\begin{proof}
Let $g = a^{l}b^{r(n/d)}$ be a non-identity element of $G$ for
some integers $l$ and $r$ with $0 \leq l < p^{n}-1$ and $0 \leq r
< d$. We may assume that $0 < r$.

Let $t$ be an integer with $0 \leq t < p^{n}-1$ so that
${a_{0}}^{t}$ is centralized by $g$. Then
$$a_{0}^{t} = g^{-1}a_{0}^{t}g = b^{-r(n/d)}a^{-l}{a_{0}}^{t}a^{l}b^{r(n/d)}
= b^{-r(n/d)}{a_{0}}^{t+l}b^{r(n/d)} =
{a_{0}}^{(t+l)p^{r(n/d)}}.$$ This implies that $t \equiv
(t+l)p^{r(n/d)} \pmod {p^{n}-1}$, that is, $$(p^{r(n/d)}-1)t
\equiv -l p^{r(n/d)} \pmod {p^{n}-1}.$$

Let $s_{0}$ be the smallest positive integer $s$ so that $p^{n}-1
\mid s (p^{r(n/d)}-1)$. Then $s_{0} \mid p^{n}-1$.

It is easy to see that for any integer $x$ there is either no
solution $t$ to the congruence
$$(p^{r(n/d)}-1)t \equiv x \pmod {p^{n}-1}$$ or there are exactly
$(p^{n}-1)/s_{0}$ solutions $t$ in the range $0 \leq t < p^{n}-1$.
For $x=0$ there is a solution hence there must be
$(p^{n}-1)/s_{0}$. So in order to maximize $|C_{V}(g)|$ we may
assume that $s_{0} = 1$ and thus $-l p^{r(n/d)} \equiv 0 \pmod
{p^{n}-1}$, that is, $l = 0$ and thus $g = b^{r(n/d)}$. But in
this case $C_{V}(g)$ can be considered as a subfield of $V$ of
order at most $p^{n/q}$ (where $q$ is the smallest prime divisor
of $d$).
\end{proof}

\begin{lem}
\label{lB} The number of $G$-orbits on $\mathrm{Irr}(V)$ is at
most $(|V|/|G|) + |V|^{1/q}$.
\end{lem}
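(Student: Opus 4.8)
The plan is to count $G$-orbits directly via the Orbit-Counting Lemma, feeding in the eigenspace bound already established. First I would apply Lemma \ref{l3}, which identifies the number of $G$-orbits on $\mathrm{Irr}(V)$ with the number of $G$-orbits on $V$. This reduces the problem to a fixed-point count for the action of $G$ on the additive group $V$, where the relevant invariants are precisely the centralizers $C_V(g)$.

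Next I would invoke the Orbit-Counting Lemma: the number of $G$-orbits on $V$ equals $\frac{1}{|G|}\sum_{g \in G}|\Fix_V(g)|$, where $\Fix_V(g)$ denotes the set of vectors of $V$ fixed by $g$. Since every $g \in G$ acts linearly on $V$, we have $\Fix_V(g) = C_V(g)$, so the count becomes $\frac{1}{|G|}\sum_{g \in G}|C_V(g)|$.

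The key step is then to separate the identity term from the rest and apply the earlier eigenspace estimate. The identity contributes $|V|$, while for each non-identity $g \in G$ Lemma \ref{lA} gives $|C_V(g)| \le |V|^{1/q}$. Summing over the at most $|G|-1$ non-identity elements yields a bound of $\frac{1}{|G|}\bigl(|V| + (|G|-1)|V|^{1/q}\bigr) \le \frac{|V|}{|G|} + |V|^{1/q}$, which is exactly the desired inequality. Combining this with Lemma \ref{l3} finishes the argument.

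There is no genuine obstacle here, since the substantive content was already discharged in Lemma \ref{lA}. The only points requiring care are the identification $\Fix_V(g) = C_V(g)$, which is immediate from linearity of the action, and the transfer from $\mathrm{Irr}(V)$ to $V$, which is supplied verbatim by Lemma \ref{l3} (itself a consequence of Brauer's Permutation Lemma). Everything beyond these observations is a single summation.
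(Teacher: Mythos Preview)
Your proof is correct and matches the paper's approach exactly: the paper simply writes ``This follows from Lemmas \ref{l3} and \ref{lA}'', leaving the Orbit-Counting Lemma step implicit (the same argument is spelled out earlier in Section~4). You have filled in precisely the intended details.
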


\begin{proof}
This follows from Lemmas \ref{l3} and \ref{lA}.
\end{proof}

We may assume that $m < p^{n}-1$. For otherwise $|G|=d$ and $G =
\langle c \rangle$. By Lemmas \ref{l4}, \ref{lA}, and \ref{lB},
there are at most $|V|^{1/q}$ $G$-invariant irreducible characters
of $V$ and all other complex irreducible characters of $V$ lie in
$G$-orbits of lengths at least $q$. By Lemmas \ref{l2} and
\ref{lB} we find that
$$k(GV) \leq \frac{d}{q} \frac{|V|}{d} + d |V|^{1/q} = \frac{|V|}{q} + d|V|^{1/q} \leq
|V|$$ unless $p^{n} \leq 64$ (but we assumed that $p^{n} > 1024$).

\begin{lem}
\label{l6} Let $\chi$ be an irreducible character of $V$. Then
$I(\chi)/V$ is a cyclic group of order at most $d$.
\end{lem}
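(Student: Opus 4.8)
The plan is to identify $I(\chi)/V$ with the stabilizer of $\chi$ in the action of $G$ on $\mathrm{Irr}(V)$, and then to exploit that $G$ is an extension of the cyclic group $S \cap G$ by the cyclic group $G/(S\cap G)$ of order $d$. Since $V$ is abelian and normal in $GV$, it lies in $I(\chi)$, and under the identification $GV/V \cong G$ one has $I(\chi)/V = \mathrm{Stab}_{G}(\chi)$. Writing $H = \mathrm{Stab}_{G}(\chi)$, the goal becomes to show that $H$ is cyclic of order dividing $d$. Throughout I take $\chi$ to be nonprincipal; for the principal character $I(\chi) = GV$, which is accounted for separately.

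The crux, which I would establish first, is that $H \cap (S\cap G) = 1$. The subgroup $S \cap G = \langle a^{m} \rangle$ acts on $V$ by field multiplication, since $a^{m}$ sends $a_{0}^{t}$ to $a_{0}^{t+m}$; as multiplication by a nonidentity field element fixes no nonzero vector, $S \cap G$ acts semiregularly on $V \setminus \{0\}$, with $0$ as its only fixed point. Because $|S\cap G|$ divides $p^{n}-1$ and is therefore coprime to $p$, the $(S\cap G)$-sets $V$ and $\mathrm{Irr}(V)$ are permutation isomorphic; equivalently, I may apply Lemma \ref{l4} with $N=V$ and the cyclic group $(S\cap G)V/V \cong S\cap G$ to match orbit sizes. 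Either way, $S \cap G$ has exactly one fixed point on $\mathrm{Irr}(V)$, namely the principal character, and acts semiregularly on the nonprincipal characters. Since $\chi$ is nonprincipal, this forces $\mathrm{Stab}_{S\cap G}(\chi) = H \cap (S\cap G) = 1$.

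To conclude, I would note that the restriction to $H$ of the quotient map $\pi \colon G \to G/(S\cap G)$ has kernel $H \cap (S\cap G) = 1$, so $\pi|_{H}$ embeds $H$ into $G/(S\cap G)$, a cyclic group of order $d$. Hence $H \cong \pi(H)$ is cyclic of order dividing $d$, and in particular $|I(\chi)/V| = |H| \le d$, as required. The hard part is the middle step: passing from the manifest semiregular action of $S \cap G$ on the module $V$ to the corresponding action on $\mathrm{Irr}(V)$, which is precisely where the coprimality of $|S\cap G|$ to $p$ is indispensable (through the permutation isomorphism, or Lemma \ref{l4}); absent this hypothesis the character action need not mirror the module action.
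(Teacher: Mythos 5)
Your proof is correct and follows essentially the same route as the paper: both establish that $I(\chi)/V$ meets $S\cap G$ trivially, via the semiregular action of $S\cap G$ on $V\setminus\{0\}$ transferred to $\mathrm{Irr}(V)\setminus\{1\}$ by the coprime permutation isomorphism, and then embed $I(\chi)/V$ into the cyclic quotient $G/(S\cap G)$ of order $d$. Your explicit caveat about the principal character (for which the statement is read as applying only to nonprincipal $\chi$, consistent with how Lemma \ref{l7} uses it) is a reasonable clarification the paper leaves implicit.
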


\begin{proof}
This follows from the fact that $(I(\chi)/V) \cap (S \cap G)$ is
trivial since $S \cap G$ acts semiregularly on $V \setminus \{ 1
\}$ and hence acts semiregularly on $\mathrm{Irr}(V) \setminus \{
1 \}$ (since $V$ and $\mathrm{Irr}(V)$ are permutation isomorphic
$S \cap G$ sets).
\end{proof}

\begin{lem}
\label{l7} Use the above notations and assumptions. We have
$$k(GV) \leq \frac{q^{2}-1}{q^{2}}d(p^{n/d}-1) + \frac{(p^{n}-1)d}{q^{2}m} + \frac{p^{n}}{p^{n}-1}m + dp^{n/q}.$$
\end{lem}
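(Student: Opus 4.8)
The plan is to combine the orbit-counting machinery of Lemma \ref{lGT} with the class-number estimates already established for $G$ and for the inertia factors. By Lemma \ref{lGT} we have $k(GV) = \sum_{\lambda} k(\mathrm{Stab}_{G}(\lambda))$, where $\lambda$ runs over representatives of the $G$-orbits on $\mathrm{Irr}(V)$. I would split this sum into the contribution of the trivial character of $V$ and the contribution of the non-trivial ones. The trivial character is fixed by all of $G$, so it contributes exactly $k(G)$, for which Lemma \ref{l5} supplies the bound
$$k(G) \leq \frac{q^{2}-1}{q^{2}}d(p^{n/d}-1) + \frac{d(p^{n}-1)}{q^{2}m},$$
which accounts for the first two summands in the statement. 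It remains to bound the contribution of the non-trivial irreducible characters of $V$.

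For the non-trivial part, each orbit representative $\lambda$ contributes $k(\mathrm{Stab}_{G}(\lambda))$. Since $\mathrm{Stab}_{G}(\lambda) = I(\lambda) \cap G$ where $I(\lambda)$ is the inertia group of $\lambda$, Lemma \ref{l6} tells us that $\mathrm{Stab}_{G}(\lambda)/V$ — viewed inside $GV$, where the relevant quotient is cyclic of order at most $d$ — has at most $d$ conjugacy classes, so $k(\mathrm{Stab}_{G}(\lambda)) \leq d$ for each non-trivial $\lambda$. Thus the non-trivial contribution is at most $d$ times the number of non-trivial $G$-orbits on $\mathrm{Irr}(V)$. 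To count those orbits I would use Lemma \ref{lB}, which bounds the total number of $G$-orbits on $\mathrm{Irr}(V)$ by $(|V|/|G|) + |V|^{1/q}$; subtracting the one orbit of the trivial character and multiplying by $d$ would in principle give a bound, but a cleaner route is to separate the $G$-invariant non-trivial characters (each an orbit of size $1$) from those in orbits of size at least $q$. By Lemmas \ref{l4} and \ref{lA} there are at most $|V|^{1/q}$ invariant characters, each contributing $k(\mathrm{Stab}_{G}(\lambda)) \le d$, giving the summand $dp^{n/q}$. The remaining non-trivial characters lie in orbits of size $\geq q$, so their number of orbits is at most $(|V|-1)/q \le (p^n/(p^n-1))\cdot (p^n-1)/q \cdot \tfrac{1}{?}$ — here I would track the $\tfrac{p^n}{p^n-1}$ factor carefully, recalling $|S\cap G| = (p^n-1)/m$ so that $|G| = d(p^n-1)/m$.

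The term $\frac{p^{n}}{p^{n}-1}m$ is the one requiring the most care. I expect it arises as follows: the characters lying in orbits of full length (semiregular under $S\cap G$) fall into at most $m = |V\setminus\{1\}|/|S\cap G| \cdot \tfrac{1}{d}\cdot d$ orbits under the semiregular action of $S \cap G$, and each such orbit under the larger group $G$ contributes a bounded number of classes. Since $S \cap G = \langle a^m\rangle$ acts semiregularly on the $p^n - 1$ non-trivial characters with orbits of size $(p^n-1)/m$, the number of $S\cap G$-orbits is exactly $m$, and passing to $G$-orbits and weighting by the factor $k(\mathrm{Stab}_G(\lambda)) \le d$ divided by the orbit-length contribution produces $\frac{p^n}{p^n-1}m$ after accounting for the single trivial character among the $p^n$ total characters. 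The main obstacle will be bookkeeping: reconciling the $|V|^{1/q}$ invariant characters (Lemma \ref{lA}), the orbit-length-$\geq q$ contribution, and the semiregular count so that they assemble into precisely these four summands without double-counting the trivial character, and correctly inserting the benign factor $\frac{p^n}{p^n-1}$ that corrects for excluding it.
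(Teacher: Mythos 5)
Your proposal is exactly the paper's proof: the paper writes $k(GV)\le k(G)+d\bigl((|V|/|G|)+|V|^{1/q}\bigr)$ using Lemmas \ref{l2}, \ref{lB} and \ref{l6}, then invokes Lemma \ref{l5}, which is precisely your decomposition into the trivial-character contribution plus $d$ times the orbit count. The only place you wobble --- the provenance of the term $\frac{p^n}{p^n-1}m$ --- needs no semiregularity bookkeeping at all: it is simply $d\cdot|V|/|G| = d\,p^{n}m/\bigl(d(p^{n}-1)\bigr)$, using $|G|=d(p^{n}-1)/m$ as you yourself note.
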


\begin{proof}
By Lemmas \ref{l2}, \ref{lB}, and \ref{l6} we have $k(GV) \leq
k(G) + d ((|V|/|G|) + |V|^{1/q})$. Finally, the claim follows from
Lemma \ref{l5}.
\end{proof}

Using Lemma \ref{l7} and our assumptions including the hypothesis
that $p^{n} > 1024$, we may assume that $m < d$. Indeed, if $m =
(p^{n}-1)/2$, then $k(GV) \leq |V|$. (Cases $d=2$, $d=3$, and $d
\geq 4$ should be treated separately.) Also, if $m = (p^{n}-1)/3$,
then $k(GV) \leq |V|$. Finally, if $d \leq m \leq (p^{n}-1)/4$,
then $k(GV) \leq |V|$. (Cases $d=2$ and $d \geq 3$ should be
treated separately.)

Let $n=2$. Then $d = 2 = n$. Since $m < d$, we have $m=1$, and so
$G = X$. There are $p-1$ fixed points of $\langle b \rangle$ on
$S$ and $(p^{2}-p)/2$ orbits of length $2$. Hence, by Lemmas
\ref{l4} and \ref{l2}, we have $k(G) \leq 2(p-1)+ (p^{2}-p)/2$.
This and the proof of Lemma \ref{l7} gives
$$k(GV) \leq 2(p-1) + \frac{p^{2}-p}{2} + \frac{p^2}{p^{2}-1} + 2p$$
which is at most $p^{2}$ (provided that $p^{2} > 1024$).

By the previous paragraph we may assume that $n > 2$.

By Zsigmondy's Theorem, there is a primitive prime divisor $p_{n}$
of $p^{n}-1$. By a primitive prime divisor we mean a prime which
divides $p^{n}-1$ but divides none of the integers $p^{r}-1$ where
$1 \leq r < n$. Such a primitive prime divisor is congruent to $1$
modulo $n$, in particular, $p_{n} \geq n+1$. Since $m < d \leq n <
p_{n}$, the prime $p_{n}$ does not divide $m$.

For every integer $t$ with $1 \leq t \leq (p^{n}-1)/m$ and for
every integer $r$ with $1 \leq r \leq d$ we have
$$c^{r}a^{mt}c^{-r} = a^{p^{nr/d}mt}.$$
If $p_{n} \nmid t$, then $p_{n} \nmid mt(p^{nr/d}-1)$ for $r< d$,
and so $p^{n}-1 \nmid mt(p^{nr/d}-1)$ for $r < d$. This means that
$a^{mt}$ is not fixed by any element $c^{r}$ for $r < d$. We
conclude that if $p_{n} \nmid t$, then $a^{mt}$ lies in a $\langle
c \rangle$-orbit of $S \cap G$ of length $d$. There are at most
$(p^{n}-1)/mp_{n}$ elements of $S \cap G$ which lie in $\langle c
\rangle$-orbits of lengths less than $d$. Hence there are at most
$$\frac{p^{n}-1}{m p_{n}} + \frac{p^{n}-1}{m d} - \frac{p^{n}-1}{m d p_{n}}$$
$\langle c \rangle$-orbits of $S \cap G$.

\begin{lem}
\label{l8} By the above notations and assumptions including $m <
d$, we have
$$k(GV) \leq \frac{d(p^{n}-1)}{m p_{n}} + \frac{p^{n}-1}{m d}
- \frac{p^{n}-1}{m d p_{n}} + \frac{p^{n}}{p^{n}-1}m + d
p^{n/q}.$$
\end{lem}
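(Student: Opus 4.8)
The plan is to feed a sharp bound for $k(G)$ into the estimate for $k(GV)$ that is already available. The proof of Lemma \ref{l7} (via Lemmas \ref{l2}, \ref{lB} and \ref{l6}) in fact established the intermediate inequality $k(GV) \leq k(G) + d\big( (|V|/|G|) + |V|^{1/q} \big)$ before invoking Lemma \ref{l5}. Since $|G| = d(p^{n}-1)/m$ we have $d\,|V|/|G| = p^{n}m/(p^{n}-1)$ and $d\,|V|^{1/q} = dp^{n/q}$, so
\[
k(GV) \leq k(G) + \frac{p^{n}}{p^{n}-1}m + d p^{n/q}.
\]
Thus it suffices to bound $k(G)$ by the first three terms on the right-hand side of the asserted inequality, this time exploiting the standing hypothesis $m < d$ together with the primitive prime divisor $p_{n}$.

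To estimate $k(G)$ I would apply Clifford theory relative to the abelian normal subgroup $S \cap G$, whose quotient $G/(S\cap G) \cong \langle c \rangle$ is cyclic of order $d$. By Lemma \ref{l2} the number of irreducible characters of $G$ lying over the $G$-orbit of a given $\chi \in \mathrm{Irr}(S\cap G)$ is at most $k(I(\chi)/(S\cap G))$; since $I(\chi)/(S\cap G)$ is a subgroup of a cyclic group of order $d$, this equals its order $d/|\mathcal{O}|$, where $\mathcal{O}$ is the $\langle c\rangle$-orbit of $\chi$ (by orbit--stabilizer). Summing over orbit representatives gives $k(G) \leq \sum_{\mathcal{O}} d/|\mathcal{O}|$, the sum running over the $\langle c\rangle$-orbits on $\mathrm{Irr}(S\cap G)$. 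By the size-preserving bijection of Lemma \ref{l4}, these orbits have the same multiset of sizes as the $\langle c\rangle$-orbits on $S\cap G$, so I may carry out the count on $S\cap G$ itself, where it was essentially already done in the paragraph preceding the statement.

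The final step is to split the elements $a^{mt}$ of $S\cap G$ (with $1 \leq t \leq (p^{n}-1)/m$) according to whether $p_{n} \mid t$. Every element with $p_{n} \nmid t$ lies in an orbit of full length $d$, and there are $\big((p^{n}-1)/m\big)(1 - 1/p_{n})/d = (p^{n}-1)/(md) - (p^{n}-1)/(mdp_{n})$ such orbits, each contributing only a single character since then $|I(\chi)/(S\cap G)| = 1$. The remaining $(p^{n}-1)/(mp_{n})$ elements with $p_{n} \mid t$ lie in orbits each contributing at most $d$ characters, for a total of at most $d(p^{n}-1)/(mp_{n})$; adding the two contributions bounds $k(G)$ exactly by the first three terms of the claim, and the displayed inequality of the first paragraph then finishes the proof. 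The one delicate point—and the step I expect to be the main obstacle—is the asymmetric weighting: a full orbit of length $d$ accounts for a single character, whereas a short orbit inside $\{p_{n}\mid t\}$ may contribute as many as $d$, and it is precisely this factor $d$ that produces the first term $d(p^{n}-1)/(mp_{n})$ rather than the bare orbit count $(p^{n}-1)/(mp_{n})$ recorded earlier. To keep the two families from mixing one needs that divisibility by $p_{n}$ is a $\langle c\rangle$-invariant of $t$; this follows from $p_{n} \mid (p^{n}-1)/m$ (valid because $p_{n} \nmid m$, as $m < d \leq n < p_{n}$) together with $\gcd(p,p_{n}) = 1$, so that reduction modulo $(p^{n}-1)/m$ preserves residues modulo $p_{n}$.
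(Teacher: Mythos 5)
Your proof is correct and takes essentially the same route as the paper: the intermediate inequality $k(GV) \leq k(G) + d\bigl((|V|/|G|) + |V|^{1/q}\bigr)$ from the proof of Lemma \ref{l7}, combined with the $\langle c\rangle$-orbit count on $S\cap G$ carried out in the paragraph preceding Lemma \ref{l8} and transferred to $\mathrm{Irr}(S\cap G)$ via Lemmas \ref{l4} and \ref{l2}, is precisely what the paper's terse proof invokes. Your closing observation that divisibility of $t$ by $p_{n}$ is a $\langle c\rangle$-invariant is a harmless refinement; the bound only needs that every orbit of length less than $d$ consists of elements with $p_{n}\mid t$, which is the contrapositive of what the paper proves.
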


\begin{proof}
This follows from Lemmas \ref{l4}, \ref{l2}, and \ref{lB}.
\end{proof}

By Lemma \ref{l8}, we may assume that $m=1$. This follows by
treating the three cases $m \geq 3$, $m = 2$ and $d = 2$, and $m =
2$ and $d \geq 3$ separately. Since $m=1$, we may assume that $c =
b^{n/d}$.

\begin{lem}
\label{l9} If $m=1$, then
$$k(GV) < \sum_{r \mid d} \Big( \frac{d}{r^{2}} p^{nr/d}\Big) + 2 + d p^{n/q}.$$
\end{lem}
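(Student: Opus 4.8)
The plan is to reduce the statement to a bound on $k(G)$ alone, and then to estimate $k(G)$ by decomposing $G$ into cosets of $S$ and counting conjugacy classes coset by coset. First I would reuse the inequality already obtained in the proof of Lemma \ref{l7}: by Lemmas \ref{l2}, \ref{lB}, and \ref{l6} one has $k(GV) \le k(G) + d((|V|/|G|) + |V|^{1/q})$, where the trivial character of $V$ contributes $k(G)$ and every non-trivial orbit contributes at most $d = |\mathrm{Stab}_G(\chi)|$. Since $m=1$ forces $S \subseteq G$, we have $|G| = d(p^n-1)$ and $|V| = p^n$, so $d\cdot|V|/|G| = p^n/(p^n-1) < 2$ and $d|V|^{1/q} = dp^{n/q}$. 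This gives
$$k(GV) \le k(G) + \frac{p^n}{p^n-1} + dp^{n/q} < k(G) + 2 + dp^{n/q},$$
so it remains only to prove $k(G) \le \sum_{r\mid d}\frac{d}{r^2}p^{nr/d}$.

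For this, note that with $m=1$ and $c=b^{n/d}$ the group is $G = S \rtimes \langle c\rangle$, with $S$ cyclic of order $p^n-1$ and $c$ acting as the automorphism $\sigma\colon x\mapsto x^{p^{n/d}}$ of order $d$. I would write $G$ as the disjoint union of the cosets $Sc^j$ for $0\le j<d$. Since $G/S$ is abelian, conjugation preserves each coset, so $k(G) = \sum_{j=0}^{d-1}k_j$, where $k_j$ is the number of $G$-conjugacy classes contained in $Sc^j$. Conjugating $sc^j$ by $t\in S$ multiplies its $S$-part by $\sigma^j(t)t^{-1}=t^{p^{jn/d}-1}$, and as $t$ ranges over $S$ these values fill out the subgroup $H_j:=\{t^{p^{jn/d}-1}:t\in S\}$, of index $\gcd(p^n-1,p^{jn/d}-1)=p^{nr/d}-1$ in $S$, where $r=\gcd(j,d)$; conjugation by $c$ acts as $\sigma^{-1}$. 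Hence $k_j$ is exactly the number of $\langle c\rangle$-orbits on the cyclic group $S/H_j$ of order $p^{nr/d}-1$. A short computation using $\gcd(p^A-1,p^B-1)=p^{\gcd(A,B)}-1$ shows the $\langle c\rangle$-action on $S/H_j$ has kernel $\langle c^r\rangle$, so it factors through a cyclic group of order $r$.

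Finally I would group the cosets by the divisor $r=\gcd(j,d)$ of $d$. For each $r$ there are $\varphi(d/r)\le d/r$ indices $j$ with $\gcd(j,d)=r$, and for each such $j$ the quantity $k_j$ is the number of orbits of a cyclic group of order $r$ acting on a set of size $p^{nr/d}-1$, which is at most about $\frac1r p^{nr/d}$. Multiplying these two estimates gives a contribution of at most $\frac{d}{r^2}p^{nr/d}$ from each $r$, and summing over $r\mid d$ yields the desired bound on $k(G)$. The delicate point is precisely this last estimate: by Burnside the $C_r$-orbit count equals $\frac1r\sum_{l=0}^{r-1}(p^{(n/d)\gcd(l,r)}-1)$, which is $\frac1r p^{nr/d}$ only up to corrections from short orbits (fixed points of $\sigma$ on $S/H_j$, numbering about $p^{n/d}$), so $k_j$ is not literally bounded by $\frac1r p^{nr/d}$. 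I expect these corrections to be of strictly smaller order in $p^{n/d}$ and to be absorbed by the slack in $\varphi(d/r)\le d/r$, by the $-1$'s discarded throughout, and by the standing hypothesis $p^n>1024$; the main obstacle is the careful bookkeeping needed to keep the clean coefficient $d/r^2$ while still securing the strict inequality claimed.
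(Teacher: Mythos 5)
Your reduction of the lemma to the bound $k(G) < \sum_{r \mid d} (d/r^{2}) p^{nr/d}$ is correct and is exactly the paper's first step (Lemmas \ref{l2}, \ref{lB}, \ref{l6}, plus $|G| = d(p^{n}-1)$ when $m=1$). The gap is in the second half. Your coset-by-coset count is a genuinely different route, and as you yourself concede it does not close: the termwise estimate $k_{j} \leq \frac{1}{r} p^{nr/d}$ is false, since Burnside gives $k_{j} = \frac{1}{r}\sum_{l=0}^{r-1}\bigl(p^{(n/d)\gcd(l,r)}-1\bigr)$ and the $l \neq 0$ terms contribute up to order $p^{nr/2d}$. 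The assertion that these corrections are absorbed by the slack in $\varphi(d/r) \leq d/r$ is left unverified, and it is not a formality: for $r = d$ there is no slack at all ($\varphi(1) = 1 = d/d$), so the correction coming from the coset $S$ itself (roughly $p^{n/2}/d$ when $d$ is even) must be charged against slack sitting at a different divisor $r'$, and one must check that the total correction routed to each level $r'$ fits under $(d/r' - \varphi(d/r'))\,p^{nr'/d}/r'$ together with the discarded $-1$'s. Until that bookkeeping is actually carried out, the proof is incomplete.

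The paper sidesteps this entirely by running the dual count: instead of conjugacy classes in cosets of $S$, it counts $\langle c \rangle$-orbits on $S$ by length. An element $a^{t}$ is fixed by $c^{-r}$ iff $p^{n}-1 \mid t(p^{nr/d}-1)$, so $c^{r}$ has exactly $p^{nr/d}-1$ fixed points on $S$; since every element of an orbit of length exactly $r$ is such a fixed point, there are fewer than $p^{nr/d}/r$ orbits of length $r$ --- a clean bound with no averaging and no error terms. Transferring to $\mathrm{Irr}(S)$ by Lemma \ref{l4} and weighting each orbit of length $r$ by $k(I(\chi)/(S\cap G)) = d/r$ via Lemma \ref{l2} gives $k(G) < \sum_{r \mid d}(d/r^{2})p^{nr/d}$ in one line. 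Either complete the absorption argument in full or switch to this orbit-length count.
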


\begin{proof}
By Lemma \ref{l8} and its proof it is sufficient to show that
$$k(G) < \sum_{r \mid d} \Big( \frac{d}{r^{2}} p^{nr/d}\Big).$$
For any positive integer $r$ dividing $d$ and any integer $t$ the
element $a^{t}$ of $S$ is fixed by $c^{-r}$ if and only if
$p^{n}-1 \mid t(p^{nr/d}-1)$. Hence there are less than
$p^{nr/d}/r$ orbits of length $r$ of $\langle c \rangle$ on $S$.
Now apply Lemma \ref{l4} and Lemma \ref{l2} to obtain the desired
conclusion.
\end{proof}

Using Lemma \ref{l9} and the assumption that $p^{n} > 1024$, we
find that $k(GV) \leq |V|$ for $d$ a prime and for $d = 4$, $6$,
$8$, and $9$. Hence we may assume that $d \geq 10$.

Finally, again by Lemma \ref{l9}, if $d \geq 10$ and $p^{n} >
1024$, we have
$$k(GV) < \frac{p^{n}}{d} + \frac{4p^{n/2}}{d} +
\frac{9 \cdot p^{n/3}}{d} + n^{2}p^{n/4} + 2 + n \cdot p^{n/2}
\leq p^{n}.$$

This finishes the proof of Theorem \ref{main}.

\section{Primitive linear groups}

Let $V$ be a finite faithful irreducible primitive $FG$-module.
Suppose that the generalized Fitting subgroup of $G$ is nilpotent.
In this section we will prove the estimates $k(GV) \leq \max \{
|V|, 2^{1344} \}$. This will verify Theorem \ref{primitive}.

Put $q=|F|$ and let $n$ be the $F$-dimension of $V$. We may
consider $G$ as a primitive (irreducible) subgroup of $GL(n,q)$.
We may also assume that $G$ is absolutely irreducible. (For
suppose that $F \not= K:= \mathrm{End}_{FG}(V)$. Then $V$ can be
viewed as a $K$-vector space of dimension $n/|K:F|$. In fact $V$
is an absolutely irreducible, primitive and faithful $KG$-module.
So if we prove the bound for $k(GV)$ where $V$ is a $KG$-module,
then the same bound will hold when $V$ is an $FG$-module.)



Let us recall a consequence of Clifford's theorem. A normal
subgroup of a primitive (irreducible) linear group acts
homogeneously on the underlying vector space. This means that any
two simple submodules of the normal subgroup are isomorphic. One
importance of this observation is that if $N \leq GL(n,q)$ is a
normal subgroup of a primitive group, then $N$ is irreducible or
$N$ can be considered to be an irreducible subgroup of $GL(d,q)$
where $d< n$ and $d \mid n$.



First suppose that whenever $N$ is a normal subgroup of $G$, then
every irreducible $FN$-submodule of $V$ is absolutely irreducible.


As we have noted before, every normal subgroup of $G$ acts
homogeneously on $V$. In particular, any abelian normal subgroup
acts homogeneously, and so is cyclic by Schur's lemma.
Furthermore, by our hypothesis on the normal subgroups of $G$, an
abelian normal subgroup of $G$ must be central (in $G$).

If all normal subgroups of $G$ are central, then $G$ is abelian,
$n=1$ and $|G| \leq q^{n}-1$. In this case $k(GV) \leq |V|$.

From now on suppose that $G$ has at least one non-central normal
subgroup.

Let $R$ be a normal subgroup of $G$ that is minimal with respect
to being non-central. We see that $R$ is non-abelian. Since $Z(R)$
is abelian and normal in $G$, we have $Z(R) \leq Z(G)$. By the
minimality of $R$, the factor group $R/Z(R)$ is characteristically
simple. So either $R$ is a central product of say $\ell$
quasi-simple groups $Q_i$ (with $Q_{i}/ Z(Q_{i})$ all isomorphic),
or $R/Z(R)$ is an elementary abelian $r$-group for some prime $r$.
The previous case does not occur since we assumed that $G$ has no
non-abelian component. In the latter case it follows that $R$ is
of symplectic type with $|R/Z(R)| = r^{2a}$ for some prime $r$ and
some positive integer $a$.

Let $J_{1}, \ldots , J_{t}$ denote the distinct normal subgroups
of $G$ that are minimal with respect to being non-central in $G$.
Put $J = J_{1} \cdots J_{t}$. Then $C_{G}(J) = Z(G)$. (The
containment $C_{G}(J) \supseteq Z(G)$ is clear. Suppose that
$C_{G}(J)$ properly contains $Z(G)$. Since $C_{G}(J)$ is a normal
subgroup of $G$ which is not central, $C_{G}(J)$ contains a normal
subgroup of $G$ which is minimal with respect to being
non-central. Without loss of generality, let such a subgroup be
$J_{1}$. Then $J_{1}$ must be abelian and so central (by the fifth
paragraph of this section).) Thus, $G/Z(G)J$ embeds into the
direct product of the outer automorphism groups of the minimal
normal non-central subgroups. If $R$ is of symplectic type with
$|R/Z(R)| = r^{2a}$, then this outer automorphism group is
isomorphic to $Sp(2a,r)$ or to $O^{\epsilon}(2a,2)$ in case $r=2$
and $|Z(R)|=2$.


Since $G$ is primitive on $V$, the normal subgroup $J$ acts
homogeneously on $V$. Let $W$ be an irreducible constituent for
$J$. It follows that $W \cong U_{1} \otimes \cdots \otimes U_{t}$
where $U_{i}$ is an irreducible $FJ_{i}$-module. Furthermore, $W$
is an irreducible faithful $FG$-module as the one treated in
Section 5; hence Theorem \ref{section5} applies.

\begin{lem}
\label{l33} Use the notations and assumptions of this section.
Suppose that $G$ is an absolutely irreducible subgroup of
$GL(n,q)$ and that whenever $N$ is a normal subgroup of $G$, then
every irreducible $FN$-submodule of $V$ is absolutely irreducible.
Then $k(GV) \leq \max \{ |V|, 2^{1344} \}$.
\end{lem}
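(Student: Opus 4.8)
The plan is to recognize that the structural analysis carried out above places the pair $(G,V)$ exactly into the framework of Section 5, and then to run the estimate behind Theorem \ref{section5}. The preceding paragraphs have produced the distinct minimal non-central normal subgroups $J_{1},\dots,J_{t}$ of $G$, each of symplectic type on a distinct prime $r_{i}$ with $|J_{i}/Z(J_{i})|=r_{i}^{2a_{i}}$, together with the facts that $R:=J_{1}\cdots J_{t}\circ Z(G)$ (a central product sharing the scalar subgroup $Z(G)=Z(R)$) equals $\mathrm{Fit}(G)=\mathrm{Fit}^{*}(G)$, that $C_{G}(R)=Z(G)$, and that the irreducible constituent $W=U_{1}\otimes\cdots\otimes U_{t}$ of $V|_{R}$ is an irreducible faithful $FG$-module. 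Since $W$ is then a $G$-irreducible submodule of the $G$-irreducible module $V$, we get $W=V$; in particular $V|_{R}$ is irreducible, $\dim_{F}V=\prod_{i}r_{i}^{a_{i}}$, and, using the standing hypothesis that every irreducible $FJ_{i}$-submodule is absolutely irreducible, each $U_{i}$ is the absolutely irreducible $FJ_{i}$-module of dimension $r_{i}^{a_{i}}$.

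Next I would verify that this is literally the configuration of Section 5. Setting $R_{i}=J_{i}$ and $V_{i}=U_{i}$, the group $G$ contains $R=R_{1}\circ\cdots\circ R_{t}\circ Z(G)$ as a normal subgroup and $V=V_{1}\otimes\cdots\otimes V_{t}$. Because $C_{G}(R)=Z(G)$, the conjugation action on $R/Z(R)$ embeds $G/R=G/Z(G)J$ into $\prod_{i}\mathrm{Out}(J_{i})\cong\prod_{i}Sp(2a_{i},r_{i})$ (with $O^{\epsilon}(2a_{i},2)$ in place of $Sp(2a_{i},2)$ when $r_{i}=2$ and $|Z(J_{i})|=2$), as recorded above. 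Hence $|G/R|\le\prod_{i}|Sp(2a_{i},r_{i})|$, which is no worse than the bound $k\cdot\prod_{i}|Sp(2a_{i},r_{i})|$ used in Section 5.

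With these identifications in place the conclusion follows from the estimate proving Theorem \ref{section5}. That estimate, namely the chain leading to inequality (\ref{e3}) and its subsequent bound by $\max\{|V|,2^{1344}\}$, uses only three inputs: the relative fixed-space bound $\dim_{F}C_{V}(x)/\dim_{F}V\le 3/4$ for every non-identity $x\in G$, valid here by \cite[Pages 82-83]{MW}; the stabilizer estimate of Lemma \ref{segedlemma}, which requires nothing beyond $R$ being normal in $G$ of the stated symplectic-type central-product form; and the order bound $|G/R|\le k\cdot\prod_{i}|Sp(2a_{i},r_{i})|$ just verified. Since all three hold for our $G$, the same computation yields $k(GV)\le\max\{|V|,2^{1344}\}$, as claimed.

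The point requiring genuine care is the reduction $V=W$, that is, that $\mathrm{Fit}^{*}(G)$ acts on $V$ with multiplicity one. This is exactly what makes the dimension match ($\dim_{F}V=\prod_{i}r_{i}^{a_{i}}$) and the tensor factorization $V=\bigotimes_{i}U_{i}$ hold over $F$ itself, and it rests on applying the absolute-irreducibility hypothesis to $N=R$ together with the $G$-irreducibility of $V$. A secondary technical matter is keeping the field-of-definition conventions of Sections 5 and 6 aligned (after reducing to the absolutely irreducible case the relevant field is $K=\mathrm{End}_{FG}(V)$, and the factor $k=\log_{p}|K|$ need not even be invoked here since $G/R$ already embeds into $\prod_{i}Sp(2a_{i},r_{i})$). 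Finally I would stress that one cannot deduce the bound merely by embedding $G$ into the full central product of Section 5 and comparing class numbers, since $k(\cdot)$ is not monotone under passage to subgroups; rather, the Section 5 derivation is driven solely by the three displayed bounds, each of which survives restriction to $G$.
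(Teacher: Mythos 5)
Your proposal follows the paper's own argument: the paper's ``proof'' of this lemma is precisely the discussion preceding its statement in Section 7 (identifying the minimal non-central normal subgroups $J_{1},\dots,J_{t}$ as symplectic-type groups, forming $J=J_{1}\cdots J_{t}$ with $C_{G}(J)=Z(G)$ and $G/Z(G)J$ embedding in the product of the $Sp(2a_{i},r_{i})$, and recognizing $W\cong U_{1}\otimes\cdots\otimes U_{t}$ as exactly the configuration of Section 5 so that Theorem \ref{section5} applies), which is what you do. Your extra attention to the multiplicity-one identification $V=W$ and to isolating the three inputs driving the estimate (\ref{e3}) only makes explicit what the paper leaves implicit.
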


There are two cases remaining: $G$ either preserves a field
extension structure on $V$ or it does not. (We say that $G$
preserves a field extension structure on $V$ if there is an
$F$-subalgebra $K \subseteq \mathrm{End}_{F}(V)$ (properly
containing $F$) so that $G$ preserves $K$ (and there is a
homomorphism from $G$ into $\mathrm{Gal}(K | F)$).)

Suppose that $G$ preserves no field extension structure. Let $N$
be a normal subgroup of $G$. Then $N$ must act homogeneously on
$V$ (since $G$ acts primitively on $V$) and moreover, the
irreducible constituents for $N$ must be absolutely irreducible
(otherwise the center of $\mathrm{End}_{N}(V)$ is $K$ for some
field extension of $F$, and would be normalized by $G$, whence $G$
preserves a field extension structure on $V$). Hence, in this
case, Lemma \ref{l33} gives the desired conclusion.

Now suppose that $G$ preserves a field extension structure on $V$
over a field $K$ with $K$ as large as possible. Let $|K|=q^{e}$
with $e>1$. Let $A = G \cap GL(n/e,q^{e})$. Let $U$ denote $V$
considered as a vector space over $K$ (and as a $KA$-module). Then
$G$ embeds in $GL(n/e,q^{e}).e$. Let $W = V \otimes_{F} K$. Now $W
\cong \oplus_{\sigma \in \mathrm{Gal}(K|F)} U^{\sigma}$ as an
$FA$-module. Then $G$ permutes the $U^{\sigma}$. Moreover, $A$
acts irreducibly on $U$ (or $G$ acts reducibly on $W$, a
contradiction to the fact that $V$ is absolutely irreducible as an
$FG$-module). Also $A$ acts faithfully on $U$ ($x$ trivial on $U$
implies that $x$ is trivial on $U^{\sigma}$ for all $\sigma$,
whence $x$ is trivial on $W$). By the maximality of $K$ it follows
that $A$ preserves no field extension structure on $U$. We may
assume that $A$ is absolutely irreducible on $U$. The group $A$
has no non-abelian component for such a subgroup would also be a
component of $G$. Hence the generalized Fitting subgroup of $A$ is
nilpotent.

Suppose that $A$ is not abelian. Then $A$ and $G$ are as in
Sections 4 and 5. Hence Theorem \ref{section5} gives $k(GV) \leq
\{ |V|, 2^{1344} \}$.

Finally, if $A$ is abelian, then $G$ is meta-cyclic. Hence, by
Theorem \ref{main}, we have $k(GV) \leq \max \{ |V|, 5 \}$.

\section{The imprimitive case}

In this section we prove Theorem \ref{irreducible}.

Let $F$ be a finite field, $V$ an $n$-dimensional vector space
over $F$ and also an irreducible $FG$-module for a finite group
$G$. Suppose that $V$ admits a direct sum decomposition $V = V_{1}
\oplus \ldots \oplus V_{t}$ as an $FG$-module. Let $H$ be the
kernel of this linear $G$-action on the $t$ direct summands. In
particular, $H$ is a normal subgroup in $G$ and $G/H$ can be
considered as a permutation group of degree $t$. Suppose that
$V_{1}$ is an irreducible $FL$-module where $L$ is the stabilizer
of $V_{1}$ in $G$. Then $H$ is a subdirect product of irreducible
subgroups $H_{1}, \ldots , H_{t}$ on the vector spaces $V_{1},
\ldots , V_{t}$, respectively. This means that $H$ is such a
subgroup of $H_{1} \times \ldots \times H_{t}$ that projects onto
every direct factor. Suppose that for every normal subgroup $N$ of
any $H_{i}$ we have $k(N) < |V_{i}|/\sqrt{3}$. We will prove by
induction on $t$ that $k(H) < {(|V_{1}|/\sqrt{3})}^{t}$. The case
$t=1$ is trivial. Suppose that $t > 1$ and the statement is true
for $t-1$. Let $H^{*}$ be the projection of $H$ onto all but the
last direct factor of $H_{1} \times \ldots \times H_{t}$. Let the
kernel of this map be $K$. Then $H/K \cong H^{*}$ which in turn is
a subdirect product of $H_{1}, \ldots , H_{t-1}$. By the induction
hypothesis we have $k(H^{*}) < {(|V_{1}|/\sqrt{3})}^{t-1}$. Now $K
\lhd H_{t}$ hence $k(K) < (|V_{t}|/\sqrt{3})$. By Lemma \ref{lN},
we have $k(H) < {(|V_{1}|/\sqrt{3})}^{t}$. By \cite{M} this gives
$$k(G) \leq
k(H) k(G/H) < (|V|/{(\sqrt{3})}^{t}) {(\sqrt{3})}^{t-1} =
|V|/\sqrt{3}$$ for $t > 2$ and $k(G) <(2/3)|V|$ for $t=2$.

This proves Theorem \ref{irreducible}.

\end{document}